\newtheorem{theorem}{Theorem}[section]
\newtheorem{lemma}[theorem]{Lemma}
\newtheorem{corollary}[theorem]{Corollary}
\newtheorem{proposition}[theorem]{Proposition}
\newtheorem{notation}[theorem]{Notation}
\theoremstyle{definition}
\newtheorem{definition}[theorem]{Definition}
\theoremstyle{remark}
\newtheorem{remark}[theorem]{Remark}
\numberwithin{subsection}{section}
\numberwithin{equation}{section}
\numberwithin{figure}{section}
\newcommand{\Z}{\mathbb{Z}}
\newcommand{\C}{\mathbb{C}}
\newcommand{\AAA}{\mathbb{A}}
\newcommand{\M}{\mathbb{M}}
\newcommand{\F}{\mathbb{F}}
\newcommand{\p}{\mathbb{P}}
\newcommand{\E}{\mathcal{E}}
\DeclareMathOperator{\Ext}{Ext}
\DeclareMathOperator{\Tor}{Tor}
\DeclareMathOperator{\Spec}{Spec}
\DeclareMathOperator{\colim}{colim}
\DeclareMathOperator{\Map}{\mathrm{Map}}
\newcommand{\id}{\mathrm{id}}
\newcommand{\Sq}[1]{\mathsf{Sq}^{#1}}
\newcommand{\SQ}[1]{\mathsf{Q}_{#1}}
\newcommand{\SA}{\mathsf{A}}
\newcommand{\SE}{\mathsf{E}}
\newcommand{\map}{\rightarrow}
\newcommand{\Smash}{\wedge}
\newcommand{\bc}{\circle*{0.2}}
\newcommand{\rc}{\circle{0.3}}
\newcommand{\mhz}{{\line(0,1){1}}}
\newcommand{\mhzt}{{{\qbezier[5](0,0)(0,0.5)(0,1)}}}
\newcommand{\mhone}{{\line(1,1){1}}}
\begin{document}

\title{Motivic connective $K$-theories and the cohomology of $\SA(1)$}

\author{Daniel C.\ Isaksen}
\author{Armira Shkembi}

\address{Department of Mathematics\\ Wayne State University\\
Detroit, MI 48202}

\address{Mathematics and Sciences Department\\ St.\ Leo University\\ 
St.\ Leo, FL 33574}

\thanks{The first author was supported by NSF grant DMS0803997.}

\email{isaksen@math.wayne.edu}
\email{armira.shkembi@saintleo.edu}

\begin{abstract}
We make some computations in stable motivic homotopy theory
over $\Spec \C$, completed at 2.  
Using homotopy fixed points and the algebraic $K$-theory spectrum,
we construct a motivic analogue of the real $K$-theory spectrum $KO$.
We also establish a theory of connective covers to obtain a motivic
version of $ko$.
We establish an Adams spectral sequence for computing motivic $ko$-homology.
The $E_2$-term of this spectral sequence involves $\Ext$ groups
over the subalgebra $\SA(1)$ of the motivic Steenrod algebra.
We make several explicit computations of these $E_2$-terms in
interesting special cases.
\end{abstract}

\keywords{Motivic stable homotopy theory,
Adams spectral sequence,
connective $K$-theory,
Steenrod algebra}

\subjclass[2000]{14F42, 55N15, 55T15}

\maketitle

\section{Introduction}\label{sec:Mot}

In classical homotopy theory, there is a strong and useful relationship
between the connective $K$-theories $ku$ and $ko$ and algebraic computations
involving the subalgebras $E(1)$ and $A(1)$ of the Steenrod algebra.
More specifically, the $\F_2$-cohomology $H^*(ku)$ of $ku$ is equal
to the quotient $A//E(1)$ of the Steenrod algebra by the 
augmentation ideal of the subalgebra $E(1)$.
For formal algebraic reasons, it follows that the Adams spectral
sequence for computing $ku_*(X)$ has 
$\Ext_{E(1)} (H^*X, \F_2)$ as its $E_2$-term.
Thus, computations in $ku$-homology are essentially the same as algebraic
computations of $\Ext$ groups in the category of $E(1)$-modules.
Similarly, $H^*(ko)$ is equal to $A//A(1)$, and the Adams spectral
sequence for computing $ko_*(X)$ has
$\Ext_{A(1)} (H^*X, \F_2)$ as its $E_2$-term.

The goal of this paper is to describe similar phenomena in 
2-complete motivic stable homotopy theory over $\Spec \C$.
Over $\Spec \C$, the motivic Steenrod algebra is completely
understood \cite{V1}, and detailed algebraic computations are
possible \cite{DI3}.  After 2-completion, we know enough about
the motivic homotopy groups of spheres \cite{DI3} \cite{HKO} in order
to carry out the necessary homotopical arguments.

In motivic homotopy theory, the algebraic $K$-theory spectrum
$KGL$ plays the role of the classical complex $K$-theory spectrum $KU$.
We are able to construct a connective version, which we call $kgl$,
and we compute that the motivic cohomology $H^{*,*}(kgl)$
of $kgl$ with $\F_2$-coefficients is equal to $\SA//\SE(1)$, where
$\SA$ is the motivic Steenrod algebra and $\SE(1)$ is the subalgebra generated
by classes $\SQ{0} = \Sq{1}$ and 
$\SQ{1} = \Sq{1} \Sq{2} + \Sq{2} \Sq{1}$ of degrees $(1,0)$ and $(3,1)$ 
respectively.
It follows that over $\Spec \C$ and working $2$-complete,
the motivic Adams spectral sequence \cite{HKO} \cite{DI3}
for computing $kgl_{*,*}(X)$ has $\Ext_{\SE(1)} (H^{*,*} X, \M_2)$
as its $E_2$-term, where $\M_2 = \F_2[\tau]$ is the 
motivic cohomology of a point.

There are two possible approaches to constructing a motivic spectrum
that is analogous to classical $KO$.  The first approach is to 
use Hermitian $K$-theory, as in \cite{Hornbostel}, but we do not adopt
this viewpoint.  Rather, we use a $\Z/2$-action on $KGL$ and
apply homotopy fixed points to obtain a new motivic spectrum
$KGL^{h\Z/2}$.
This is analogous to the classical fact that $KO$ is equivalent
to $KU^{h\Z/2}$.
We shall call this spectrum $KO$ for convenience.

Although we believe that $KGL^{h\Z/2}$ is equivalent to Hermitian
$K$-theory over $\Spec \C$, we have not proved this result here
because the methods would take us too far afield from our main point.
Philosophically, we find that the homotopy fixed points description
of our motivic spectrum tells us everything that we would like to know 
about it.  It turns out that we really don't need a more concrete
geometric description.

Having constructed the motivic version of $KO$, we can consider
the connective cover $ko$.  We compute that the motivic cohomology
$H^{*,*}(ko)$ of $ko$ is equal to $\SA//\SA(1)$, where $\SA(1)$
is the subalgebra of the motivic Steenrod algebra generated by 
$\Sq{1}$ and $\Sq{2}$.
It follows that over $\Spec \C$ and working $2$-complete,
the motivic Adams spectral sequence
for computing $ko_{*,*}(X)$ has $\Ext_{\SA(1)} (H^{*,*} X, \M_2)$
as its $E_2$-term.

The result is that motivic $ko$-homology is effectively computable.
To demonstrate this point, we
compute the groups $\Ext_{\SA(1)}(M,\M_2)$ for various $\SA(1)$-modules $M$ 
of interest.  In this paper, we do not provide any calculations over $\SE(1)$
because the motivic calculations are essentially identical to the classical ones.

Our algebraic computations over motivic $\SA(1)$ are similar to the classical
computations over $\SA(1)$. 
One of the fundamental differences between classical homotopy theory
and motivic homotopy theory is that $\eta^4$ is zero classically,
but $\eta^k$ is non-zero motivically for all $k \geq 0$.
Our calculations over $\SA(1)$ detect this difference.

Along the way to computing the cohomology of $kgl$ and $ko$, we need a
collection of technical results about cellular motivic spectra over $\Spec \C$.
These results are likely to be useful in other contexts as well.

Ultimately, one would like to compute as much as possible
about $\Ext_A(\M_2,\M_2)$, which is the
$E_2$-term of the Adams spectral sequence that converges to motivic
stable homotopy groups.  Some progress on this has been made \cite{DI3},
and our computations over $\SA(1)$ also contribute to this larger program.
We have focused on $kgl$-homology and $ko$-homology because of their
relationship to motivic stable homotopy groups, but it is
possible to study $kgl$-cohomology and $ko$-cohomology within the
same framework.

Since we are working over $\Spec \C$,
there is a realization functor from motivic homotopy theory to
classical homotopy theory \cite{DDDI3} \cite[Section 3.3]{MV}.
Our motivic computations must be compatible under this functor with
the corresponding classical computations.  We strive to provide proofs
that are internal to motivic homotopy theory wherever possible,
but realization will be a useful tool for us from time to time.

In this paper, we will use the notion of motivic ring spectra, but
only in the naive sense.
For our purposes, we have no need for modern theories of
highly structured ring spectra, although such theories do exist for
motivic spectra \cite{DRO} \cite{Hovey} \cite{Hu} \cite{Jardine}.

\subsection{Organization}

We begin with a review of the algebraic objects that we will study.
Then we provide some background on 2-complete motivic stable homotopy theory
over $\Spec \C$.  Here we are collecting the homotopical tools necessary
for our later calculations.  We then compute the homotopy of $ko$
using a homotopy fixed points spectral sequence.  We also compute
the homotopy of $kgl^{h\Z/2}$; this calculation contains a curious
difference to the analogous classical calculation of the 
homotopy of $ku^{h\Z/2}$.  Our next calculation is the ordinary
motivic $\F_2$-cohomology of $kgl$ and $ko$.  This allows us to
describe the $E_2$-terms of Adams spectral sequences for computing
$kgl_{*,*}(X)$ and $ko_{*,*}(X)$.  Finally, we conclude the paper with
some specific calculations related to $ko$-homology.

\subsection{Acknowledgments}
We acknowledge many useful conversations with Robert Bruner,
Mark Behrens, and Mike Hill.  We particularly thank Paul Arne \O stv\ae r
for the structure of the argument in Section \ref{sctn:cohlgy}.

\section{Algebraic Definitions}
\label{sctn:intro}

In this section, we introduce the basic algebraic objects that we will study,
and we remind the reader of their relevance to motivic homotopy theory.

Throughout the paper, we will primarily be working with bigraded objects.

\begin{definition} An element of bidegree $(a,b)$ 
is said to have topological degree $a$ and weight $b.$
\end{definition}

\begin{definition} 
Let $\M_2$ be the bigraded polynomial ring $\F_2[\tau]$, 
where $\tau$ has bidegree $(0,1).$
\end{definition}

The importance of $\M_2$ is that it is the $\F_2$-motivic cohomology
ring of $\Spec \C$ \cite{V3}.

Recall that the motivic Steenrod algebra $\SA$ is an 
$\M_2$-algebra generated by elements $\Sq{2k}$ and $\Sq{2k+1}$
of bidegrees $(2k,k)$ and $(2k+1,k)$, subject to a motivic
version of the Adem relations \cite{V2}.

\begin{notation}
We write $\Sq{i_1,\ldots,i_n}$ for the product 
$\Sq{i_1}\cdots \Sq{i_n}.$
\end{notation}

\begin{definition}\mbox{}
\begin{enumerate}
\item
Let $\SE(0)$ be the $\M_2$-subalgebra of $\SA$ generated by $\Sq{1}$.
\item
Let $\SE(1)$ be the $\M_2$-subalgebra of $\SA$ generated by $\SQ{0} = \Sq{1}$
and $\SQ{1} = \Sq{1,2} + \Sq{2,1}$.
\item
Let $\SA(1)$ be the $\M_2$-subalgebra of $\SA$ 
generated by $\Sq{1}$ and $\Sq{2}$.
\end{enumerate}
\end{definition}

The following lemma is a straightforward calculation with low-dimensional
Adem relations.

\begin{lemma}\mbox{}
\begin{enumerate}
\item
$\SE(0)$ is equal to $\M_2[\Sq{1}] / \Sq{1,1}$.
\item
$\SE(1)$ is equal to $
\M_2[\SQ{0}, \SQ{1}] / \SQ{0}^2, \SQ{1}^2, \SQ{0}\SQ{1} + \SQ{1}\SQ{0}$,
i.e., the exterior $\M_2$-algebra on $\SQ{0}$ and $\SQ{1}$.
\item
$\SA(1)$ is equal to 
\[
\frac{\M _2[\Sq{1}, \Sq{2}]}{\Sq{1,1}=0, \Sq{2,2}= \tau \Sq{1,2,1}, \Sq{1,2,1,2}=\Sq{2,1,2,1}}
\] 
\end{enumerate}
\end{lemma}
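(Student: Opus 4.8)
The plan is to verify the three claimed presentations by direct computation with the motivic Adem relations, working one subalgebra at a time and exploiting the fact that each relevant subalgebra is concentrated in small degrees. For $\SE(0)$, the only nontrivial check is $\Sq{1}\Sq{1} = 0$: this is the lowest motivic Adem relation, which reads $\Sq{1}\Sq{1} = 0$ exactly as in the classical case (the $\tau$-decoration only enters starting in higher degrees). Since $\SE(0)$ is generated over $\M_2$ by a single element $\Sq{1}$, the presentation $\M_2[\Sq{1}]/\Sq{1,1}$ follows immediately; one should also remark that $\Sq{1}$ has bidegree $(1,0)$ so there is no ambiguity about the grading and no additive relations beyond the one stated.

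For $\SE(1)$, I would first record the bidegrees: $\SQ{0} = \Sq{1}$ has bidegree $(1,0)$ and $\SQ{1} = \Sq{1,2} + \Sq{2,1}$ has bidegree $(3,1)$. The three relations to establish are $\SQ{0}^2 = 0$, $\SQ{1}^2 = 0$, and $\SQ{0}\SQ{1} + \SQ{1}\SQ{0} = 0$. The first is again just $\Sq{1}\Sq{1}=0$. For the anticommutativity relation, expand $\Sq{1}(\Sq{1,2}+\Sq{2,1}) + (\Sq{1,2}+\Sq{2,1})\Sq{1}$ using $\Sq{1,1}=0$ and the Adem relation for $\Sq{2}\Sq{1}$ (motivically $\Sq{2}\Sq{1} = \Sq{3} + \Sq{1}\Sq{2}$ up to $\tau$-factors — one must track the precise motivic form from \cite{V2}), and check that everything cancels. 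The relation $\SQ{1}^2 = 0$ is the most substantial of the three: expand $(\Sq{1,2}+\Sq{2,1})^2$ into four length-four monomials and reduce each using low-dimensional motivic Adem relations; the $\tau$'s that appear must conspire to cancel, and this is where one genuinely uses that we are over $\Spec\C$ with $\M_2 = \F_2[\tau]$. Finally, one argues that these relations exhaust $\SE(1)$: the exterior algebra $\Lambda_{\M_2}(\SQ{0},\SQ{1})$ has $\M_2$-basis $1, \SQ{0}, \SQ{1}, \SQ{0}\SQ{1}$ in bidegrees $(0,0), (1,0), (3,1), (4,1)$, and one checks (by a dimension count against the known structure of $\SA$, or by observing that $\SQ{0}, \SQ{1}$ are part of a system of algebra generators dual to the appropriate $\xi$'s) that the subalgebra they generate has exactly this size.

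For $\SA(1)$, the presentation has three defining relations: $\Sq{1,1} = 0$ (immediate), $\Sq{2,2} = \tau\Sq{1,2,1}$, and $\Sq{1,2,1,2} = \Sq{2,1,2,1}$. The relation $\Sq{2,2} = \tau\Sq{1,2,1}$ is precisely the first place the motivic Steenrod algebra visibly differs from the classical one (classically $\Sq{2}\Sq{2} = \Sq{3}\Sq{1} = \Sq{1}\Sq{2}\Sq{1}$, with no $\tau$); it is read directly off the motivic Adem relation for $\Sq{2}\Sq{2}$ in \cite{V2}, noting that $\Sq{3} = \Sq{1}\Sq{2}$ inside $\SA(1)$. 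The top relation $\Sq{1,2,1,2} = \Sq{2,1,2,1}$ lives in bidegree $(6,3)$ and is derived by repeatedly applying the two lower relations to rewrite both sides to a common normal form. The main obstacle, and the step I would budget the most care for, is proving that these relations are \emph{complete} — i.e., that the displayed quotient is not just a quotient of $\SA(1)$ but equal to it. The cleanest route is to produce an explicit $\M_2$-basis for the quotient ring (the analogue of the classical $8$-element basis $1, \Sq{1}, \Sq{2}, \Sq{1}\Sq{2}, \Sq{2}\Sq{1}, \Sq{1}\Sq{2}\Sq{1}, \Sq{2}\Sq{1}\Sq{2} (= \Sq{1}\Sq{2}\Sq{1}\Sq{2}), \ldots$ — here one expects a free $\M_2$-module on the same $8$ monomials, since $\SA(1)$ should be free over $\M_2$ of rank $8$) and then match it against the known structure of $\SA$ from \cite{V1}, for instance by dualizing: $\SA(1)$ is dual to a quotient Hopf algebroid of $\SA_*$, namely $\M_2[\xi_1, \xi_2]/(\xi_1^4, \xi_2^2)$ with its motivic comultiplication, which is visibly free of rank $8$ over $\M_2$. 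Comparing ranks forces the map from the presented ring onto $\SA(1)$ to be an isomorphism. This "no hidden relations" verification — equivalently, checking that the three listed relations generate the full relation ideal — is the only part that is not a mechanical Adem-relation manipulation, and it is where I would invoke the computation of $\SA_*$ from \cite{V1} rather than argue from scratch.
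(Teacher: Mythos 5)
Your proposal takes essentially the same route as the paper, which disposes of this lemma with the single remark that it is a straightforward calculation with low-dimensional motivic Adem relations; your expanded version --- verify each relation by Adem manipulation in $\SA$, then confirm completeness by an $\M_2$-rank count against the known (dual) structure of $\SA$ --- is exactly what that remark is shorthand for. The only quibble is your parenthetical Adem relation for $\Sq{2}\Sq{1}$ (that monomial is already admissible; the relevant relation is $\Sq{1}\Sq{2}=\Sq{3}$), but you flag the need to check the precise motivic form, and the argument does not depend on it.
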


\begin{remark}
Classically, the Adem relation $\Sq{2,2} = \Sq{1,2,1}$ implies that
$\Sq{1,2,1,2} = \Sq{2,1,2,1}$.  However, in the motivic situation,
the Adem relation $\Sq{2,2} = \tau \Sq{1,2,1}$ only implies that
$\tau \Sq{1,2,1,2} = \tau \Sq{2,1,2,1}$.  Therefore, we must include
the relation $\Sq{1,2,1,2} = \Sq{2,1,2,1}$ in the description of $\SA(1)$.
\end{remark}

Figure \ref{fig:A(1)} is a pictorial representation of $\SA(1)$,
where each circle at height $a$ stands for a copy of $\M_2$ in
topological degree $a$.  Multiplications by $\Sq{1}$
are represented by straight lines, and multiplications by $\Sq{2}$
are represented by curved lines.  The dashed line indicates
that $\Sq{2}$ on the generator in bidegree $(2,1)$ equals
$\tau$ times the generator in bidegree $(4,1)$.

For a subalgebra $B$ of the motivic Steenrod algebra $\SA$, we write
$\SA//B$ for the quotient of $\SA$ by the augmentation ideal of $B$.
In this paper, $B$ will be $\SE(0)$, $\SE(1)$, or $\SA(1)$.

\newpage

\begin{figure}[htbp!]
\begin{center}

\psset{unit=0.5cm}
\begin{pspicture}(-1,0)(1,7)

% Copies of M_2
\pscircle(0,0){0.3}
\pscircle(0,1){0.3}
\pscircle(0,2){0.3}
\pscircle(-1,3){0.3}
\pscircle(1,3){0.3}
\pscircle(0,4){0.3}
\pscircle(0,5){0.3}
\pscircle(0,6){0.3}

% Sq{1} multiplications
\psline(0,0)(0,1)
\psline(0,2)(-1,3)
\psline(1,3)(0,4)
\psline(0,5)(0,6)

% Sq(2} multiplications
\psbezier(0,0)(-0.7,0.7)(-0.7,1.3)(0,2)
\psbezier[linestyle=dashed](0,2)(-0.7,2.7)(-0.7,3.3)(0,4)
\psbezier(0,4)(0.7,4.7)(0.7,5.3)(0,6)
\psbezier(0,1)(0.7,1.7)(1,2.3)(1,3)
\psbezier(-1,3)(-1,3.7)(-0.7,4.3)(0,5)

\end{pspicture}
\end{center}
\caption{$\SA(1)$}
\label{fig:A(1)}
\end{figure}

\begin{lemma}\mbox{}
\label{lem:subalg-mult}
\begin{enumerate}
\item
The kernel of right multiplication by $\Sq{1}$ on $\SA$
equals the image of right multiplication by $\Sq{1}$.
\item
The kernel of right multiplication by $\SQ{1}$ on $\SA//\SE(0)$
equals the image of right multiplication by $\SQ{1}$.
\item
The kernel of right multiplication by $\Sq{2}$ on $\SA//\SE(1)$
equals the image of right multiplication by $\Sq{2}$.
\end{enumerate}
\end{lemma}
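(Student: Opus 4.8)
The plan is to deduce all three parts from one trivial observation together with one structural input. The observation: over a commutative exterior $\M_2$-algebra $R = \M_2[x]/x^2$ on a single generator, every free $R$-module $F$ has the property that the kernel of multiplication by $x$ on $F$ equals the image. It suffices to check $F = R$, where $\alpha + \beta x \mapsto \alpha x$ has kernel and image both equal to $\M_2 x$; direct sums of such maps are again exact. The structural input: $\SA$ is free as a right module over each of the sub-Hopf-algebras $\SE(0)$, $\SE(1)$, $\SA(1)$. I postpone discussion of this to the end; granting it, the lemma follows quickly.

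First one observes that each assertion really is an exactness statement, since $\Sq{1,1} = 0$ and $\SQ{1}^2 = 0$ in $\SA$, while $\Sq{2,2} = \tau\Sq{1,2,1}$ vanishes in $\SA//\SE(1)$ because $\Sq{1} \in \SE(1)$; so in each case right multiplication by the operator in question is a differential. For part (1), take $R = \SE(0) = \M_2[\Sq{1}]/\Sq{1,1}$: then $\SA$ is a free right $\SE(0)$-module --- concretely, the admissible monomials not ending in $\Sq{1}$ form an $\SE(0)$-basis, since each such monomial $M$ together with $M\Sq{1}$ runs over all admissibles --- and the observation applies directly. For parts (2) and (3) I would use change of rings. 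Since $\SE(0)$ is automatically normal in the commutative algebra $\SE(1)$, and $\SE(1)$ is a normal sub-Hopf-algebra of $\SA(1)$, the subquotients $\SE(1)//\SE(0) = \M_2[\SQ{1}]/\SQ{1}^2$ and $\SA(1)//\SE(1) = \M_2[\Sq{2}]/\Sq{2,2}$ are exterior $\M_2$-algebras on one generator; and associativity of the tensor product gives isomorphisms of right modules $\SA//\SE(0) \cong \SA \otimes_{\SE(1)} \bigl(\SE(1)//\SE(0)\bigr)$ and $\SA//\SE(1) \cong \SA \otimes_{\SA(1)} \bigl(\SA(1)//\SE(1)\bigr)$ under which the exterior generator acts by right multiplication by $\SQ{1}$, resp.\ $\Sq{2}$, as in $\SA$. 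Because $\SA$ is free as a right module over $\SE(1)$ and over $\SA(1)$, these tensor products are free right modules over $\M_2[\SQ{1}]/\SQ{1}^2$, resp.\ $\M_2[\Sq{2}]/\Sq{2,2}$, and the observation from the first paragraph finishes the proof.

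The step I expect to be the main obstacle is the structural input: freeness of $\SA$ as a right module over $\SE(0)$, $\SE(1)$, $\SA(1)$, the motivic analogue of the Milnor--Moore theorem that a connected graded Hopf algebra is free over a sub-Hopf-algebra. The wrinkle is that $\M_2 = \F_2[\tau]$ is not a field. I would handle this by reducing along $\tau$: over the graded field $\M_2[\tau^{-1}]$ and over the field $\M_2/\tau = \F_2$ the classical theorem applies --- each of $\SE(0)$, $\SE(1)$, $\SA(1)$ is a $\tau$-torsion-free direct summand of $\SA$, so remains a sub-Hopf-algebra after these reductions --- and one then assembles the integral statement by a $\tau$-Bockstein argument, using that $\SA$ and the subalgebras are $\tau$-torsion-free and bounded below in each topological degree. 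Alternatively, and perhaps quicker to write out in full, one can bypass freeness entirely and deduce the three exactness statements directly from their classical counterparts --- acyclicity of the $Q_0$-complex on $A$, of the $Q_1$-complex on $A//E(0)$, and of the $\Sq{2}$-complex on $A//E(1)$ --- by the same $\tau$-Bockstein comparison: modulo $\tau$ the motivic complexes agree with the classical ones under Betti realization, and a convergence argument for the $\tau$-Bockstein spectral sequence upgrades exactness modulo $\tau$ to exactness over $\M_2$.
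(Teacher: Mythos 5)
Your overall architecture is the standard one and is sound: reduce each statement to freeness of $\SA$ as a right module over the relevant subalgebra, observe that a free module over an exterior algebra on one generator has kernel equal to image for that generator, and pass from $\SE(0)$ to $\SE(1)$ to $\SA(1)$ by associativity of the tensor product. Your part (1) is complete as written: right multiplication by $\Sq{1}$ permutes the motivic admissible basis (pairing each admissible not ending in $\Sq{1}$ with the admissible obtained by appending $\Sq{1}$, and killing the rest), with no $\tau$'s or Adem rewriting involved, so the kernel and image are both the $\M_2$-span of admissibles ending in $\Sq{1}$. The paper itself offers only the one-line remark that the claims ``follow for the same combinatorial reasons as in the classical case,'' so you have supplied considerably more detail than the authors do.

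The genuine gap is in your handling of the remaining input for parts (2) and (3) --- freeness of $\SA$ over $\SE(1)$ and $\SA(1)$, or equivalently acyclicity of the relevant complexes --- over the non-field $\M_2 = \F_2[\tau]$. Your proposed shortcut asserts that ``modulo $\tau$ the motivic complexes agree with the classical ones under Betti realization.'' This is false: Betti realization corresponds to inverting $\tau$ (setting $\tau = 1$), not to killing it, and the reduction $\SA/\tau$ is genuinely a different algebra from the classical Steenrod algebra --- for instance $\Sq{2}\Sq{2} = \tau\Sq{1,2,1}$ becomes \emph{zero} mod $\tau$, whereas classically $\Sq{2}\Sq{2} = \Sq{3}\Sq{1} \neq 0$. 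So the mod-$\tau$ fiber of your Bockstein argument cannot be imported from classical Margolis-homology computations; and the $\tau$-inverted fiber alone does not suffice (a complex like $\M_2 \xrightarrow{\tau} \M_2$ is exact after inverting $\tau$ but not integrally). The Bockstein logic itself is fine in the direction you need --- for a complex of free $\M_2$-modules, bounded below in weight in each topological degree, acyclicity mod $\tau$ does imply acyclicity --- but the mod-$\tau$ acyclicity (equivalently, the freeness of $\SA/\tau$ over the mod-$\tau$ subalgebras) must be established by a direct combinatorial argument with the motivic admissible or Milnor basis, in the spirit of your part (1). That combinatorial verification is precisely the content the paper is eliding, and your write-up does not actually supply it for parts (2) and (3).
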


\begin{proof}
All three claims follow in
the motivic case for the same combinatorial reasons as in the classical
case.
\end{proof}

\begin{definition}\label{rem:M[k,l]} 
For any $\SA(1)$-module $M$, let $\Sigma^{k,l}M$ denote the $\SA(1)$-module obtained by increasing the bidegree of each element of $M$ by $(k,l).$
Let $\Sigma$ be $\Sigma^{1,0}$.
\end{definition}

\begin{lemma}
\label{lem:subalg-exact}
There are short exact sequences
\[
\xymatrix{
0 \ar[r] & \Sigma \SA//\SE(0) \ar[r]^-{\cdot \Sq{1}} & 
  \SA \ar[r] & \SA//\SE(0) \ar[r] & 0 \\
0 \ar[r] & \Sigma^{3,1} \SA//\SE(1) \ar[r]^-{\cdot \SQ{1}} 
  & \SA//\SE(0) \ar[r] & \SA//\SE(1) \ar[r] & 0 \\
0 \ar[r] & \Sigma^{2,1} \SA//\SA(1) \ar[r]^-{\cdot \Sq{2}} & \SA//\SE(1) \ar[r] &
  \SA//\SA(1) \ar[r] & 0. }
\]
\end{lemma}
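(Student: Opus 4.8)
The plan is to build each of the three short exact sequences from the corresponding statement in Lemma \ref{lem:subalg-mult}, since each sequence says precisely that right multiplication by the relevant operator is injective and has cokernel the next quotient. First consider the top sequence. The operator $\Sq{1}$ satisfies $\Sq{1,1} = 0$ in $\SA$, so right multiplication $\cdot \Sq{1} \colon \SA \to \SA$ has image contained in its own kernel; by Lemma \ref{lem:subalg-mult}(1) the kernel \emph{equals} the image, so the map $\SA \to \SA$, $x \mapsto x \Sq{1}$, factors as $\SA \twoheadrightarrow \SA//\SE(0) \hookrightarrow \SA$, where the first map is the canonical quotient (its kernel is $\SA \cdot \Sq{1}$, the augmentation ideal of $\SE(0) = \M_2[\Sq{1}]/\Sq{1,1}$ by Lemma 2.5(1)) and the second is injective. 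Accounting for the bidegree $(1,0)$ of $\Sq{1}$, this injection has source $\Sigma \SA//\SE(0)$, and its cokernel is $\SA/(\SA \cdot \Sq{1}) = \SA//\SE(0)$. That gives the first short exact sequence.

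The other two sequences follow the identical pattern, one quotient further along. For the middle sequence, work in $\SA//\SE(0)$ and use that $\SQ{1}^2 = 0$ in $\SE(1)$ (Lemma 2.5(2)), hence also in $\SA//\SE(0)$, so that right multiplication by $\SQ{1}$ on $\SA//\SE(0)$ has image inside its kernel; Lemma \ref{lem:subalg-mult}(2) gives the reverse inclusion. Thus $\cdot \SQ{1}$ factors through the quotient of $\SA//\SE(0)$ by $(\SA//\SE(0)) \cdot \SQ{1}$, which is exactly $\SA//\SE(1)$ (the augmentation ideal of $\SE(1)$ inside $\SA//\SE(0)$ is generated by the class of $\SQ{1}$), and the resulting map to $\SA//\SE(0)$ is injective. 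Shifting by the bidegree $(3,1)$ of $\SQ{1}$ yields $0 \to \Sigma^{3,1}\SA//\SE(1) \to \SA//\SE(0) \to \SA//\SE(1) \to 0$. For the bottom sequence, work in $\SA//\SE(1)$, use the relation $\Sq{2,2} = \tau \Sq{1,2,1}$ (Lemma 2.5(3)) which implies $\Sq{2,2} \equiv 0$ in $\SA//\SE(1)$ since $\Sq{1}$ maps to $0$ there, so that $\cdot \Sq{2}$ on $\SA//\SE(1)$ has image in its kernel, and invoke Lemma \ref{lem:subalg-mult}(3) for equality; the cokernel of $\cdot \Sq{2}$ is the quotient of $\SA//\SE(1)$ by $(\SA//\SE(1)) \cdot \Sq{2}$, which is $\SA//\SA(1)$ because $\SA(1)$ is generated over $\SE(1)$ by $\Sq{2}$.

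The main obstacle is the bookkeeping identifying the cokernel of each multiplication map with the correct quotient $\SA//B$ — that is, checking that the two-sided ideal generated by the augmentation ideal of $B$ agrees with the one-sided ideal cut out by right multiplication by the single generator we are using. This is the place where one must be careful in the motivic setting: the relations in $\SA(1)$ differ from the classical ones (as the remark after Lemma 2.5 emphasizes), so one should verify directly that, for each $B \in \{\SE(0), \SE(1), \SA(1)\}$, the augmentation ideal $B^+$ is generated as a left ideal over the previous quotient by the last new generator of $B$. For $\SE(0)$ and $\SE(1)$ this is immediate from the exterior-algebra descriptions in Lemma 2.5(1)--(2); for $\SA(1)$ one checks using Lemma 2.5(3) that every element of $\SA(1)^+$ lies in $\SE(1)^+ + \SE(1)\cdot\Sq{2}\cdot\SE(1)$, so modulo $\SE(1)^+$ it is a right $\SE(1)$-multiple of $\Sq{2}$. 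Everything else is a formal consequence of "kernel equals image," so the three sequences drop out at once. One can alternatively observe that each sequence is forced upon realization to the classical exact sequences of \cite{DI3} and that the motivic statements are the evident $\M_2$-linear lifts, but the direct argument above avoids relying on realization.
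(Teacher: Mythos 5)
Your proposal is correct and takes essentially the same route as the paper, whose entire proof is that the lemma ``follows immediately from Lemma \ref{lem:subalg-mult}''; you have simply written out the routine details (image contained in kernel from the relations $\Sq{1,1}=0$, $\SQ{1}^2=0$, $\Sq{2,2}=\tau\Sq{1,2,1}$, equality from Lemma \ref{lem:subalg-mult}, and the identification of each cokernel with the next quotient $\SA//B$). The bookkeeping you flag, that the left ideal $\SA\cdot B^+$ is generated modulo the previous one by right multiples of the single new generator, is exactly the point being left implicit, and your verification of it is correct.
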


\begin{proof}
This follows immediately from Lemma \ref{lem:subalg-mult}.
\end{proof}

\begin{definition} 
For $n=2k$, let $Q_n$ be the quadric hypersurface of 
$\C\p^{n+1}$ defined by $x_0 x_1 + \cdots + x_{2k} x_{2k+1}=0$. 
For $n=2k+1$, let $Q_{n}$ be the quadric hypersurface of
$\C\p^{n+1}$ defined by 
$x_0 x_1 + \cdots + x_{2k} x_{2k+1} + x_{2k+2}^2 =0$.
Let $DQ_{n}$ be the open complement $\C\p^{n}-Q_{n-1}$ of $Q_{n-1}$ in $\C\p^n$.
\end{definition}

Consider the inclusions
\[
\C\p^{2k} \map \C\p^{2k+1}: [x_0 : \cdots : x_{2k}] \mapsto
[x_0: \cdots : x_{2k-1} : x_{2k} : x_{2k} ]
\]
and
\[
\C\p^{2k-1} \map \C\p^{2k}:
[x_0 : \cdots : x_{2k-1} ] \mapsto
[x_0 : \cdots : x_{2k-1} : 0 ].
\]
These maps restrict to inclusions $DQ_n \map DQ_{n+1}$.  We will implicitly
assume that $DQ_n$ is a subvariety of $DQ_{n+1}$.

\begin{definition}\label{def:DQinf} 
Let $DQ_{\infty}$ be $\colim_n(DQ_n)$.
\end{definition}

All cohomology groups throughout the paper are taken with 
$\F_2$-coefficients.

\begin{proposition}
\cite{DDDI}
\cite{V2}
\begin{align*}
H^{*,*}(DQ_{n}) &=
\left\{
\begin{array}{lll}
\M_2[a,b] / a^2 - \tau b, b^{k+1} & \mbox{ if } & n=2k+1 \\
\M_2[a,b] / a^2 - \tau b, b^{k+1}, ab^k  & \mbox{ if } & n=2k
\end{array}
\right. \\
H^{*,*} (DQ_\infty) & =
\M_2[a,b] / a^2 - \tau b,
\end{align*}
where $a$ has bidegree $(1,1)$ and $b$ has bidegree $(2,1)$.
\end{proposition}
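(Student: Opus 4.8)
The plan is to compute $H^{*,*}(DQ_n)$ from the Gysin (localization) sequence attached to the smooth closed immersion $i\colon Q_{n-1}\hookrightarrow \C\p^n$, whose open complement is $DQ_n$ and whose normal bundle is $\mathcal O_{\C\p^n}(2)|_{Q_{n-1}}$; with $\F_2$-coefficients this reads
\[
\cdots\to H^{p-2,q-1}(Q_{n-1})\xrightarrow{\ i_*\ }H^{p,q}(\C\p^n)\xrightarrow{\ j^*\ }H^{p,q}(DQ_n)\xrightarrow{\ \partial\ }H^{p-1,q-1}(Q_{n-1})\to\cdots .
\]
First I would record the two input computations: $H^{*,*}(\C\p^n)=\M_2[u]/u^{n+1}$ with $u$ in bidegree $(2,1)$, and --- since every $Q_m$ appearing here is a split quadric, hence a cellular variety --- $H^{*,*}(Q_{n-1})$ is a free $\M_2$-module with one generator in each bidegree $(2c,c)$ for $0\le c\le n-1$, except that when $n-1$ is even there are two generators in the middle bidegree. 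This is the motivic refinement of the classical Chow ring of a split quadric and is available from \cite{V2}. Both of these cohomologies are concentrated in even topological degree, so the Gysin sequence breaks into short exact sequences: writing $i_*$ for the total Gysin map $H^{*,*}(Q_{n-1})\to H^{*+2,*+1}(\C\p^n)$, the even-degree part of $H^{*,*}(DQ_n)$ is $\operatorname{coker}(i_*)$ as a quotient of $H^{*,*}(\C\p^n)$, and the odd-degree part is $\Sigma^{1,1}$ of $\ker(i_*)$.

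The real content is therefore the Gysin map $i_*$, and two observations determine it. First, the projection formula gives $i_*(i^*y)=y\cdot c_1(\mathcal O(2))=2u\cdot y=0$ with $\F_2$-coefficients, so $i_*$ vanishes on the image of $i^*$, which is the $\M_2$-span of the powers of the hyperplane class. Hence $i_*$ is controlled by its values on the ``ruling'' generators of $Q_{n-1}$, the ones not hit by $i^*$; each such generator lies in some bidegree $(2c,c)$ and is represented by a linear subspace $\C\p^{n-1-c}\subset Q_{n-1}\subset \C\p^n$, whose push-forward to $\C\p^n$ is $[\C\p^{n-1-c}]=u^{c+1}\ne 0$. (One may instead invoke complex realization, under which $DQ_n$ becomes $\R\p^n$ and $i_*$ becomes the topological Gysin map; a ruling class maps to $u^{c+1}\ne 0$, which pins the motivic value down since the ambient motivic group is $\F_2$.) Substituting this into the two short exact sequences, and tracking $\tau$-multiples, yields exactly the free $\M_2$-modules claimed: the even part is spanned by $b^{j}:=j^*(u^j)$ for $0\le j\le k$ (the higher powers of $u$ being in the image of $i_*$, hence killed by $j^*$), and the odd part is spanned by classes of the form $ab^{j}$, with the top odd class $ab^{k}$ present when $n=2k+1$ and zero when $n=2k$ --- exactly the two cases of the statement. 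Realization again confirms that none of the intended basis elements $b^j$, $ab^j$ vanishes, since they map to the standard basis of $H^*(\R\p^n;\F_2)$.

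It remains to fix the ring structure. The relations $b^{k+1}=0$ --- and $ab^{k}=0$ when $n=2k$ --- are forced by the bidegree bookkeeping above: there is simply no nonzero class in the relevant bidegree. The one substantive relation is $a^2=\tau b$, where $a\in H^{1,1}(DQ_n)$. Here I would use Voevodsky's formula for the square of a weight-one class, $a^2=\tau\,\beta(a)+\rho\cdot a$, in which $\rho=\{-1\}$ vanishes over $\Spec\C$, so that $a^2=\tau\,\beta(a)$; it then suffices to identify $\beta(a)=b$ in the one-dimensional group $H^{2,1}(DQ_n)$. This follows either by realization --- the Bockstein commutes with realization, and in $H^*(\R\p^n;\F_2)$ the Bockstein of the degree-one generator is its nonzero square, which is the realization of $b$ --- or internally, by identifying $a$ with the class of the $\mu_2$-torsor $\mathcal O(1)|_{DQ_n}$ (well defined since $\mathcal O(2)|_{DQ_n}$ is canonically trivialised by the defining equation of $Q_{n-1}$), whose Bockstein is $c_1(\mathcal O(1))\bmod 2=b$. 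With generators and relations matched, the evident surjection $\M_2[a,b]/(a^2-\tau b,\,b^{k+1})$ --- together with $ab^k$ when $n=2k$ --- onto $H^{*,*}(DQ_n)$ is a map of free $\M_2$-modules that is a bidegreewise isomorphism by a rank count. Finally, $H^{*,*}(DQ_\infty)=\lim_n H^{*,*}(DQ_n)$, with $\lim^1=0$ because each bidegree stabilises; since the transition maps fix $a$ and $b$ and only kill the top cell, the limit is $\M_2[a,b]/(a^2-\tau b)$. The main obstacle is the second paragraph: correctly assembling the split-quadric cohomology together with the precise behaviour of $i_*$ on ruling classes, which is where the inputs from \cite{V2}, \cite{DDDI}, and (optionally) complex realization do the actual work.
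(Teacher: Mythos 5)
Your argument is correct, but note first that the paper does not actually prove this proposition: it is quoted from \cite{DDDI} and \cite{V2}, so there is no internal proof to compare against. Your Gysin-sequence derivation is a legitimate, essentially self-contained way to establish the result, and it differs in emphasis from the computation in \cite{DDDI}, which (over a general field) obtains the additive structure by building a cell structure on $DQ_n$ from the filtration by smaller deleted quadrics --- the successive quotients $DQ_n/DQ_{n-1}$ are identified with motivic spheres $S^{n,\lceil n/2\rceil}$ --- rather than by confronting the closed complement $Q_{n-1}\subset\C\p^n$ directly. The trade-off is that your route requires two external inputs that the cell-structure route avoids: the motivic cohomology of the split quadrics $Q_{n-1}$ (a standard consequence of their cellularity, but not something found in \cite{V2}; that reference supplies the operations and the formula $a^2=\tau\beta(a)+\rho a$, not the quadric computation, so the attribution in your second paragraph should be adjusted), and the values of $i_*$ on the ruling classes. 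Both inputs are handled correctly: the projection-formula vanishing of $i_*$ on the image of $i^*$, the identification $i_*[\C\p^{n-1-c}]=u^{c+1}$, and the use of complex realization to resolve the remaining one-dimensional $\F_2$-ambiguities (nonvanishing of $ab^j$, the identification $\beta(a)=b$) are all sound, and the passage to $DQ_\infty$ via the Milnor sequence is fine. So the only corrections I would make are to the citations, not to the mathematics.
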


\begin{proposition}
\label{prop:DQinfty-A(1)-module}
The $\SA(1)$-module structure on $H^{*,*}(DQ_n)$ is given by:
\begin{align*}
\Sq{1} ab^k & = b^{k+1} \\
\Sq{1} b^k & = 0 \\
\Sq{2} b^k & = 
\left\{
\begin{array}{ll}
0 & \mbox{ if } k \mbox{ is even} \\
b^{k+1} & \mbox{ if } k \mbox{ is odd}
\end{array}
\right. \\
\Sq{2} ab^k & = 
\left\{
\begin{array}{ll}
0 & \mbox{ if } k \mbox{ is even} \\
ab^{k+1} & \mbox{ if } k \mbox{ is odd}.
\end{array}
\right. 
\end{align*}
\end{proposition}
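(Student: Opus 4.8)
The plan is to reduce the whole module structure to the values of $\Sq{1}$ and $\Sq{2}$ on the algebra generators $a$ and $b$, and then to propagate via the motivic Cartan formula. Over $\Spec\C$ we have $\Sq{1}(\tau)=0$ and $\Sq{2}(\tau)=0$, since $H^{1,1}(\Spec\C)=0$ and $H^{2,2}(\Spec\C)=0$; moreover $\Sq{1}$ is a derivation, and the motivic Cartan formula reads $\Sq{2}(xy)=\Sq{2}(x)\,y+\tau\,\Sq{1}(x)\,\Sq{1}(y)+x\,\Sq{2}(y)$, where the factor of $\tau$ is forced by weight considerations and is the one genuinely motivic feature here. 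It follows that the $\SA(1)$-action on $H^{*,*}(DQ_\infty)=\M_2[a,b]/(a^2-\tau b)$ is completely determined once $\Sq{1}(a)$, $\Sq{1}(b)$, $\Sq{2}(a)$, and $\Sq{2}(b)$ are known.

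I would pin these four classes down as follows. Since $a$ has topological degree $1$, the vanishing of $\Sq{2}$ above the degree gives $\Sq{2}(a)=0$. A glance at the ring $\M_2[a,b]/(a^2-\tau b)$ shows that $H^{3,1}(DQ_\infty)=0$, forcing $\Sq{1}(b)=0$; that $H^{2,1}(DQ_\infty)=\F_2\{b\}$, so $\Sq{1}(a)$ is $0$ or $b$; and that $H^{4,2}(DQ_\infty)=\F_2\{b^2\}$, so $\Sq{2}(b)$ is $0$ or $b^2$. In both remaining cases the nonzero option is correct. One has $\Sq{2}(b)=b^2$ by the property $\Sq{2}(x)=x^2$ for $x$ of bidegree $(2,1)$; equivalently, $b$ is the restriction along $DQ_\infty\to\C\p^\infty$ of the first Chern class $h\in H^{2,1}(\C\p^\infty)$ — the restriction being injective because the quadric has even degree — and $\Sq{2}(h)=h^2$. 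One has $\Sq{1}(a)=b$ by complex realization: $DQ_\infty$ realizes to $\R\p^\infty$ with $a\mapsto t$ and $b\mapsto t^2$, and $\Sq{1}(t)=t^2\neq 0$; alternatively, $\Sq{1}(a)$ is the Bockstein of $a$, which is nonzero because $a$ is not the mod $2$ reduction of an integral class.

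Feeding these base cases into the Cartan formula completes the computation over $DQ_\infty$. As $\Sq{1}$ is a derivation, $\Sq{1}(b^k)=0$ and $\Sq{1}(ab^k)=\Sq{1}(a)\,b^k=b^{k+1}$. For $\Sq{2}$ the formula yields the recursion $\Sq{2}(b^k)=\Sq{2}(b)\,b^{k-1}+\tau\,\Sq{1}(b)\,\Sq{1}(b^{k-1})+b\,\Sq{2}(b^{k-1})=b^{k+1}+b\,\Sq{2}(b^{k-1})$, which together with $\Sq{2}(1)=0$ gives, by induction, $\Sq{2}(b^k)=b^{k+1}$ for $k$ odd and $0$ for $k$ even; likewise $\Sq{2}(ab^k)=\Sq{2}(a)\,b^k+\tau\,\Sq{1}(a)\,\Sq{1}(b^k)+a\,\Sq{2}(b^k)=a\,\Sq{2}(b^k)$. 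Note that in each application the $\tau$-term vanishes, since it always carries a factor $\Sq{1}(b^j)=0$. Finally, for finite $n$ the open inclusion $DQ_n\hookrightarrow DQ_\infty$ induces a surjection $H^{*,*}(DQ_\infty)\to H^{*,*}(DQ_n)$ of algebras over the motivic Steenrod algebra sending $a$ to $a$ and $b$ to $b$, so all the formulas descend to $DQ_n$ unchanged.

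The only real content lies in the two base cases $\Sq{1}(a)=b$ and $\Sq{2}(b)=b^2$: bidegree bookkeeping by itself leaves two possibilities in each, and ruling out the trivial one requires either a property of the motivic Steenrod operations (vanishing above the degree, and $\Sq{2n}$ as squaring in bidegree $(2n,n)$) or an appeal to complex realization. Everything else is a formal consequence of the Cartan formula; the one point demanding care is the $\tau$ appearing in the motivic Cartan formula for $\Sq{2}$, which, as noted, happens not to affect the final answer.
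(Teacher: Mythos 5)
Your proposal is correct and follows essentially the same route as the paper: determine $\Sq{1}$ and $\Sq{2}$ on the generators $a$ and $b$ using standard properties of the motivic Steenrod operations (instability for $\Sq{2}a=0$, the squaring property in bidegree $(2,1)$ for $\Sq{2}b=b^2$, and the Bockstein/realization identification for $\Sq{1}a=b$), then propagate with the motivic Cartan formula and restrict to finite $n$. The only cosmetic differences are that the paper cites \cite{DDDI} for $\Sq{1}a=b$ where you invoke topological realization, and deduces $\Sq{1}b=0$ from $\Sq{1}\Sq{1}=0$ where you use a bidegree count.
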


\begin{proof}
As proved in \cite[Lemma 4.7]{DDDI} and \cite{V2},
we have that $\Sq{1}a=b$.  Since $\Sq{2}$ has degree $(2,1)$ and $a$ has degree $(1,1)$, properties of the motivic Steenrod algebra given in \cite[Lemma 9.9]{V2} imply that $\Sq{2}a=0$. 

From the definition of $\SA(1)$, we have $\Sq{1}b=\Sq{1,1}a=0$. 
Also, results in \cite[Lemma 9.8]{V2} imply that $\Sq{2}b=b^2$. 

The $\SA(1)$-action on the other elements of $H^{*,*}(DQ_n)$ 
follows from the Cartan formula \cite[Proposition 9.6]{V2}.
\end{proof}

\begin{remark}
The $\SA(1)$-module structure on $H^{*,*}(DQ_n)$ is easily derived
from the above proposition, using the map $DQ_n \map DQ_{\infty}$.
\end{remark}

Figure \ref{fig:DQ_inf} is a pictorial representation of
the $\SA(1)$-module $H^{*,*}(DQ_\infty)$, where
a circle at height $p$ stands for a copy of $\M_2$ 
in topological degree $p$.  Multiplications
by $\Sq{1}$ are indicated by straight lines, and multiplications
by $\Sq{2}$ are indicated by curved lines.

\begin{figure}[htbp!]
\begin{center}
\psset{unit=0.5cm}
\begin{pspicture}(-2,0)(2,11)
%\scriptsize

\pscircle(0,0){0.3}
\pscircle(0,1){0.3}
\pscircle(0,2){0.3}
\pscircle(0,3){0.3}
\pscircle(0,4){0.3}
\pscircle(0,5){0.3}
\pscircle(0,6){0.3}
\pscircle(0,7){0.3}
\pscircle(0,8){0.3}
\pscircle(0,9){0.3}
\pscircle(0,10){0.3}

\psline(0,1)(0,2)
\psline(0,3)(0,4)
\psline(0,5)(0,6)
\psline(0,7)(0,8)
\psline(0,9)(0,10)

\psbezier(0,2)(0.7,2.7)(0.7,3.3)(0,4)
\psbezier(0,3)(-0.7,3.7)(-0.7,4.3)(0,5)
\psbezier(0,6)(0.7,6.7)(0.7,7.3)(0,8)
\psbezier(0,7)(-0.7,7.7)(-0.7,8.3)(0,9)

\rput(-2,0){$1$}
\rput(-2,1){$a$}
\rput(-2,2){$b$}
\rput(-2,3){$ab$}
\rput(-2,4){$b^2$}
\rput(-2,5){$ab^2$}
\rput(-2,6){$b^3$}
\rput(-2,7){$ab^3$}
\rput(-2,8){$b^4$}
\rput(-2,9){$ab^4$}
\rput(-2,10){$b^5$}

\rput(2,0){$(0,0)$}
\rput(2,1){$(1,1)$}
\rput(2,2){$(2,1)$}
\rput(2,3){$(3,2)$}
\rput(2,4){$(4,2)$}
\rput(2,5){$(5,3)$}
\rput(2,6){$(6,3)$}
\rput(2,7){$(7,4)$}
\rput(2,8){$(8,4)$}
\rput(2,9){$(9,5)$}
\rput(2,10){$(10,5)$}

\rput(0,11){$\vdots$}

\end{pspicture}
\end{center}
\caption{Action of $\SA(1)$ on $H^{*,*}(DQ_\infty)$}\label{fig:DQ_inf}
\end{figure}
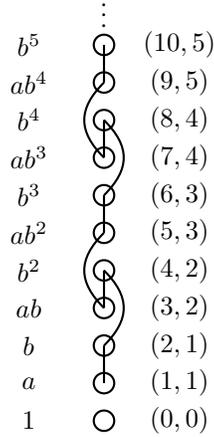

\section{Background on motivic spectra}

In this paper, we work with stable motivic homotopy theory
over $\Spec \C$, after completion at the Eilenberg-Mac Lane spectrum
$H\F_2$ in the sense of \cite{DI3}.  We suppress the completion from
the notation.  (For example, $\pi_{p,q}$ is the $H\F_2$-completed
motivic stable homotopy group.)
The results of
\cite{HKO} imply that $H\F_2$-completion is the same as $2$-completion.

We refer to \cite[Part 3]{MHT} for background on motivic stable
homotopy theory.  There are several well-behaved model structures
for motivic stable homotopy theory.  Although we will use
model theoretic techniques occasionally, we will not be precise
about these technical details.  For example, we shall
implicitly assume that all motivic spectra are cofibrant
and fibrant.

If $X$ is a based motivic space, then we abuse notation and write
$X$ also for the motivic suspension spectrum of $X$.

The following calculation is essential to our results.

\begin{proposition}\mbox{}
\label{prop:pi-compute}
\begin{enumerate}
\item
The subring $\pi_{0,*} = \oplus_k \pi_{0,k}$ 
of the motivic stable homotopy ring is equal to
$\Z_2[\tau]$, where $\tau$ has bidegree $(0,-1)$.
\item
The motivic stable homotopy group $\pi_{p,q}$ is zero if $p<0$ or $q>p$.
\end{enumerate}
\end{proposition}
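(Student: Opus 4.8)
The plan is to deduce both parts from the motivic Adams spectral sequence over $\Spec\C$, completed at $2$: its $E_2$-page is $\Ext_{\SA}(\M_2,\M_2)$, and by \cite{HKO} \cite{DI3} it converges strongly to the $H\F_2$-completed motivic stable homotopy ring, so everything reduces to a study of this $E_2$-term in the appropriate internal degrees. It is convenient to work homologically, so that $\M_2 = H\F_2{}_{*,*}(\Spec\C) = \F_2[\tau]$ with $\tau$ in bidegree $(0,-1)$, and to compute $\Ext_{\SA}(\M_2,\M_2)$ from the cobar complex of the dual motivic Steenrod algebra $\SA_{*,*}$, which over $\M_2$ is polynomial on the $\xi_i$, in bidegree $(2^{i+1}-2,\,2^i-1)$, and exterior on the $\tau_i$, in bidegree $(2^{i+1}-1,\,2^i-1)$.

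For part (2) I would run two degree counts on this cobar complex. First, every monomial generator of the augmentation ideal $\overline{\SA_{*,*}}$ has topological degree at least $1$, so $\Ext^{s,(t,w)}_{\SA}(\M_2,\M_2)=0$ whenever $t<s$; passing to $E_\infty$ gives $\pi_{p,q}=0$ for $p<0$. Second, a basis monomial of $\overline{\SA_{*,*}}$ of topological degree $d$ has weight at most $\lfloor d/2\rfloor$ (each $\xi_i$ and $\tau_i$ satisfies $2\cdot(\text{weight})\le(\text{topological degree})$, and $\tau$ lowers weight), and $\lfloor d/2\rfloor\le d-1$ for $d\ge 1$; summing over the $s$ tensor factors of a cobar element of total topological degree $t$ gives weight at most $t-s$, so $\Ext^{s,(t,w)}_{\SA}(\M_2,\M_2)=0$ whenever $w>t-s$, and hence $\pi_{p,q}=0$ for $q>p$. (This last vanishing is of course a form of Morel's stable connectivity theorem, but the Adams-theoretic argument is self-contained given the tools in hand.)

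For part (1) I specialize to the $0$-stem, $t=s$: then every tensor factor has topological degree exactly $1$, and the only such elements of $\overline{\SA_{*,*}}$ are the $\tau^m\tau_0$, so the $0$-stem of the cobar complex is that of the exterior $\M_2$-algebra on $\tau_0$. Therefore $\Ext_{\SA}(\M_2,\M_2)$ in the $0$-stem is the polynomial algebra $\M_2[h_0]$ on $h_0=[\tau_0]$; equivalently $\Ext^{s,(s,w)}_{\SA}(\M_2,\M_2)$ is $\F_2$ for $s\ge 0$, $w\le 0$, and is $0$ for $w>0$. Both $h_0$ and $\tau$ are permanent cycles, since the differentials out of the $0$-stem land in stem $-1$, which vanishes by part (2); hence so is every class in the $0$-stem. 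There are moreover no differentials into the $0$-stem, since $\tau$ acts injectively there while, under realization to the classical Adams spectral sequence, any such differential maps to zero. Thus $E_\infty$ and $E_2$ agree in the $0$-stem, and, using that $h_0^n\ne 0$ for all $n$ (so the $h_0$-extensions assemble to $\Z_2$ rather than a finite group), convergence identifies $\pi_{0,-k}$ with $\Z_2$ for each $k\ge 0$, generated by the class $\tau^k$ detected by $\tau^k\in\Ext^{0,(0,-k)}_{\SA}(\M_2,\M_2)$. Multiplicativity of the spectral sequence then gives $\pi_{0,*}=\Z_2[\tau]$ as claimed.

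The genuine content lies not in this degree bookkeeping but in the construction, the identification of the $E_2$-term, and the strong convergence of the motivic Adams spectral sequence for the sphere over $\Spec\C$ — a spectrum that is not bounded below in the motivic sense — which is exactly what \cite{HKO} and \cite{DI3} provide; similarly, the fact that every power of $h_0$ is non-zero in motivic $\Ext_{\SA}(\M_2,\M_2)$, used above, follows from the comparison with the classical computation via the $\tau$-Bockstein spectral sequence of \cite{DI3}.
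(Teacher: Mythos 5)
Your proof is correct and follows the same route the paper takes: the paper's entire proof of this proposition is a citation of the motivic Adams spectral sequence of \cite{DI3} (or the Adams--Novikov spectral sequence of \cite{HKO}), and your cobar-complex degree counts together with the $0$-stem computation $\M_2[h_0]$ are precisely the content behind that citation. The one imprecision --- describing the dual motivic Steenrod algebra as exterior on the $\tau_i$ rather than imposing $\tau_i^2 = \tau\xi_{i+1}$ --- does not affect the $\M_2$-module basis on which your degree counts rely, so the argument stands.
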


\begin{proof}
This follows from motivic versions of 
the Adams spectral sequence \cite{DI3} or
the Adams-Novikov spectral sequence \cite{HKO}.
\end{proof}

For any $X$ and any $j$, observe that 
$\pi_{j,*} X = \oplus_k \pi_{j,k} X$ is a module over $\Z_2[\tau]$.

We recall the notion of cellular spectra from \cite[Definition 2.10]{DDDI2}.

\begin{definition}
\label{defn:cellular}
The class of cellular motivic spectra is the smallest class such that:
\begin{enumerate}
\item
Every sphere $S^{p,q}$ is cellular;
\item
If $X$ is weakly equivalent to a cellular motivic spectrum, then it is cellular;
\item
If $X$ is a homotopy colimit of cellular motivic spectra, then it is cellular.
\end{enumerate}
\end{definition}

We will need the following Whitehead theorem for
cellular motivic spectra.

\begin{proposition}[DDDI2, Corollary 7.2]
\label{prop:pi-cellular}
A map between cellular motivic spectra is a weak equivalence if 
and only if it induces an isomorphism on $\pi_{p,q}$ for all $p$ and $q$.
\end{proposition}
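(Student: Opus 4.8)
The plan is to prove the nontrivial direction: a map $f \colon X \to Y$ of cellular motivic spectra inducing an isomorphism on all bigraded homotopy groups $\pi_{p,q}$ is a weak equivalence. The forward direction is formal, since weak equivalences induce isomorphisms on homotopy by definition. First I would reduce to the statement that a cellular motivic spectrum $Z$ with $\pi_{p,q} Z = 0$ for all $(p,q)$ is contractible. This reduction proceeds by forming the cofiber $C$ of $f$; since $f$ is a map of cellular spectra, $C$ is cellular (it is a homotopy colimit of cellular spectra, namely the mapping cone), and the long exact sequence in bigraded homotopy groups forces $\pi_{p,q} C = 0$ for all $(p,q)$. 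Granting the reduced statement, $C$ is contractible, and then the cofiber sequence $X \to Y \to C$ shows $f$ is a weak equivalence.

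Next I would prove the reduced statement by induction up the cellular filtration. The class of cellular spectra is generated under weak equivalence and homotopy colimits from the spheres $S^{p,q}$, so any cellular $Z$ can be presented as a (sequential, or more generally transfinite) homotopy colimit $Z = \colim_\alpha Z_\alpha$ where each $Z_{\alpha+1}$ is obtained from $Z_\alpha$ by attaching cells, i.e., sitting in a cofiber sequence $\bigvee_i S^{p_i, q_i} \to Z_\alpha \to Z_{\alpha+1}$. The key tool is that homotopy groups commute with such homotopy colimits, via a $\lim^1$ Milnor-type exact sequence; here one must know that the relevant $\lim^1$ terms vanish, which is where Proposition \ref{prop:pi-compute} enters: because $\pi_{j,*}$ of any spectrum is a module over $\Z_2[\tau]$ and the homotopy of spheres is concentrated in the range $p \geq 0$, $q \leq p$, the cellular filtration can be arranged so that in each fixed bidegree only finitely many cells contribute, making the colimit systems eventually constant and killing $\lim^1$. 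The alternative, and probably cleaner, route is to argue directly: represent an arbitrary element of a homotopy group of $Z$ by a map from a sphere, which by compactness of the source factors through some finite stage $Z_\alpha$, and do the same for nullhomotopies; then the vanishing of $\pi_{*,*} Z$ can be detected at finite stages and propagated.

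The main obstacle is the bookkeeping of the cellular filtration and the compactness argument: one needs that spheres $S^{p,q}$ are compact objects in the motivic stable homotopy category (so that maps out of them commute with filtered homotopy colimits), and one needs to handle the bigrading carefully so that the connectivity estimates from Proposition \ref{prop:pi-compute} actually bound the contributions of cells in each bidegree. I expect that the honest approach here is simply to cite the corresponding result from \cite[Corollary 7.2]{DDDI2} — indeed the proposition is attributed there — and the proof in that reference is precisely an induction over cells using compactness of the spheres together with the connectivity behavior of motivic homotopy. So in the paper itself the proof would be a one-line invocation of \cite{DDDI2}; the substance above is the structure of the argument that reference carries out.
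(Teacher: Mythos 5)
The paper offers no proof of this proposition at all: it is stated with the attribution ``[DDDI2, Corollary 7.2]'' and the citation is the entire justification, exactly as you anticipate in your final paragraph. Your sketch of the underlying argument (reduction to contractibility of an acyclic cellular spectrum, induction over the cell filtration, compactness of the spheres $S^{p,q}$) is a reasonable account of what the cited reference carries out, so there is nothing to compare against within this paper itself.
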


Recall that the motivic Eilenberg-Mac Lane spectrum $H\F_2$ 
represents motivic cohomology with $\F_2$-coefficients.

\begin{proposition}[\cite{HKO}]
The motivic spectrum $H\F_2$ is cellular.
\end{proposition}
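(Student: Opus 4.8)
The plan is to exhibit $H\F_2$ as a homotopy colimit of cellular motivic spectra, so that cellularity follows from part (3) of Definition \ref{defn:cellular}. The standard approach is to build $H\F_2$ one motivic cohomology operation at a time, or more precisely to use a Postnikov-type or skeletal filtration whose associated graded pieces are wedges of motivic spheres. Concretely, I would recall that $H\F_2$ has an explicit cell structure coming from its construction (for instance via the bar construction on $\mathbb{Z}/2$, or via the simplicial/cosimplicial resolutions used in \cite{V3}), or alternatively I would cite the fact that the motivic cohomology of a point $\M_2 = \F_2[\tau]$ together with the structure of the motivic Steenrod algebra $\SA$ pins down $\pi_{*,*} H\F_2$ and the cell attaching data.

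The cleanest route is probably the following. First, observe that $H\F_2$ is connective and that $\pi_{0,*} H\F_2 = \M_2 = \F_2[\tau]$ with $\tau$ in bidegree $(0,-1)$, while all other homotopy is concentrated in a range controlled by Proposition \ref{prop:pi-compute}-style vanishing. Second, build a cellular spectrum $X$ together with a map $X \to H\F_2$ by attaching cells: start with a wedge of copies of $S^{0,q}$ mapping onto generators of $\pi_{0,*} H\F_2$ over $\F_2$ (one $S^{0,0}$ for the unit and then cells $S^{0,q}$ to realize multiplication by $\tau^q$, or equivalently use the fact that $S^0 \to H\F_2$ together with the cellular structure of the motivic sphere handles the $\tau$-tower), then inductively attach cells in higher topological degree to kill the kernel and hit the image on each $\pi_{p,q}$. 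Since each stage only adds cells $S^{p,q}$, the resulting $X$ is cellular by construction. Third, apply the Whitehead theorem for cellular spectra, Proposition \ref{prop:pi-cellular}: by construction $X \to H\F_2$ is an isomorphism on all $\pi_{p,q}$, hence a weak equivalence, and so $H\F_2$ is cellular by part (2) of Definition \ref{defn:cellular}.

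The main obstacle is making the inductive cell-attachment argument actually converge and produce an honest homotopy colimit of cellular spectra: one must check that at each stage the relevant homotopy groups are finitely generated (or at least free) $\Z_2[\tau]$-modules so that the attaching maps can be chosen, and that the colimit over all stages both is cellular and computes $H\F_2$ (the latter needs that in each fixed bidegree the tower stabilizes, which follows from connectivity together with the vanishing line of Proposition \ref{prop:pi-compute}). An alternative that sidesteps part of this bookkeeping is to invoke a known explicit cellular model — for example, $H\F_2$ arises as a filtered colimit of finite motivic spectra built from Thom spaces or from the standard cosimplicial cobar resolution, each term of which is visibly cellular — and then appeal to Definition \ref{defn:cellular}(3) directly. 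I would present the argument in whichever of these two forms is shortest given the references already available, most likely by citing the explicit construction of $H\F_2$ from the literature and noting that each stage of that construction is a cellular spectrum, so the colimit is cellular.
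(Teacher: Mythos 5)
The paper offers no proof of this statement at all: it is quoted directly from \cite{HKO}. So the only question is whether your proposed argument is sound, and unfortunately the main route you describe has a genuine logical gap: it is circular. Your plan is to build a cellular spectrum $X$ together with a map $X \map H\F_2$ inducing an isomorphism on all $\pi_{p,q}$, and then to invoke the Whitehead theorem for cellular spectra (Proposition \ref{prop:pi-cellular}) to conclude that the map is a weak equivalence. But that proposition applies only to maps \emph{between cellular spectra}; the cellularity of the target $H\F_2$ is exactly what you are trying to prove. This is not a removable technicality --- it is the entire content of the notion of cellularity in the motivic setting. Unlike the classical situation, a $\pi_{*,*}$-isomorphism between arbitrary motivic spectra need not be a weak equivalence, because the bigraded spheres do not generate the motivic stable category. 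Every motivic spectrum admits a cellular approximation $X \map H\F_2$ that is an isomorphism on $\pi_{*,*}$ (this is part of the general theory in \cite{DDDI2}); the proposition asserts the nontrivial fact that for $H\F_2$ this approximation is actually an equivalence, and no amount of matching homotopy groups can establish that.

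Your fallback --- citing an explicit construction of $H\F_2$ as a colimit of visibly cellular pieces --- is the right idea and is essentially what the paper does by deferring to \cite{HKO}, but as written it does not constitute a proof either: you would need to name a specific construction and verify cellularity of each stage. The arguments in the literature require real geometric input, for instance the identification of motivic Eilenberg--Mac\,Lane spaces with colimits of symmetric powers of spheres, or the presentation of $H\Z$ as a quotient of the cellular spectrum $MGL$ followed by taking the cofiber of multiplication by $2$; the ``cosimplicial cobar resolution'' you mention would itself have to be based on a spectrum already known to be cellular. In short: the second half of your proposal points in the right direction but is not carried out, and the first half, which is carried out, does not work.
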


The following proposition is proved in 
\cite{DI-flasque} and \cite{Jardine}.

\begin{proposition}
The functor $\pi_{p,q}$ commutes with filtered colimits of motivic spectra.
\end{proposition}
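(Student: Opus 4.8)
The plan is to identify $\pi_{p,q}X$ with the set $[S^{p,q},X]$ of morphisms in the homotopy category of motivic spectra, and to reduce the statement to two facts: first, that each bigraded sphere $S^{p,q}$ is a compact object, in the sense that the represented functor $[S^{p,q},-]$ carries filtered homotopy colimits to filtered colimits of groups; and second, that an ordinary filtered colimit of motivic spectra, formed along the structure maps after a fibrant–cofibrant replacement that turns those maps into cofibrations, represents the filtered homotopy colimit. Granting both, the proposition follows at once by evaluating $[S^{p,q},-]$ on a filtered diagram $\{X_i\}$ and comparing the canonical map $\colim_i \pi_{p,q}X_i \to \pi_{p,q}(\colim_i X_i)$.

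First I would recall the standard foundations: the motivic stable homotopy category over $\Spec\C$ is a compactly generated triangulated category, with the suspension spectra of smooth schemes — and in particular the spheres $S^{p,q}$ — forming a set of compact generators (see \cite[Part 3]{MHT}, \cite{Jardine}). Compactness of $S^{p,q}$ amounts to the assertion that $\p^1$ (equivalently $S^{2,1}$) and $S^{1,0}$ are finitely presentable in the relevant unstable model category, together with the fact that passing to $\p^1$-spectra preserves compactness. This is precisely what the flasque model structure of \cite{DI-flasque} is designed to make transparent: it is finitely generated, meaning its generating cofibrations and generating trivial cofibrations may be chosen with finitely presentable source and target.

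Next I would address the point-set issue, that an ordinary filtered colimit computes the homotopy colimit. Here I would again invoke the flasque (finitely generated) model structure: since its generating (trivial) cofibrations have finitely presentable domains and codomains, filtered colimits preserve weak equivalences and send fibrant objects to fibrant objects. Thus, given a filtered diagram $\{X_i\}$ of motivic spectra, I would replace it functorially by a diagram of cofibrant–fibrant spectra whose structure maps are cofibrations; the ordinary colimit of that diagram is then fibrant and models the homotopy colimit, and its formation commutes with $[S^{p,q},-]$ by compactness. Combining the two facts produces the natural isomorphism $\colim_i \pi_{p,q}X_i \xrightarrow{\ \cong\ } \pi_{p,q}(\colim_i X_i)$.

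The main obstacle is genuinely the compactness of the motivic spheres together with the interaction of filtered colimits with weak equivalences; the rest is formal manipulation in the triangulated homotopy category. A slightly more hands-on route in the sequential case is to model the homotopy colimit by a mapping telescope, use the cofiber sequence $\bigvee_i X_i \to \bigvee_i X_i \to \holim$-free telescope (the $\mathrm{shift}-\mathrm{id}$ construction) together with the fact that $\pi_{p,q}$ commutes with countable wedges — again a consequence of compactness of $S^{p,q}$ — and read off $\pi_{p,q}$ of the telescope as the cokernel of $\mathrm{shift}-\mathrm{id}$ on $\bigoplus_i \pi_{p,q}X_i$, which is $\colim_i \pi_{p,q}X_i$; a cofinality argument then handles general filtered diagrams. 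This detour, however, is not shorter than the compactness argument above and relies on the same key input.
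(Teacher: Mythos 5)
The paper offers no proof of its own here; it simply cites \cite{DI-flasque} and \cite{Jardine}, and your argument --- compactness of the spheres $S^{p,q}$ together with the fact that the finitely generated (flasque) model structure makes filtered colimits of (suitably replaced) diagrams compute homotopy colimits --- is exactly the standard argument underlying those references, so your proposal is correct and consistent with the paper's intent. The only caveat, which the paper itself also glosses over, is that $\pi_{p,q}$ is declared in this paper to mean the $H\F_2$-completed homotopy groups, and commuting the completion past filtered colimits is an additional point addressed neither by you nor by the paper.
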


Finally, we recall that there is a topological realization functor
from motivic spectra to ordinary spectra \cite{DDDI3} \cite[Section 3.3]{MV}.
It is built from the functor
that takes a complex algebraic variety to its space of $\C$-valued
points.  

Topological realization preserves homotopy colimits, and in particular
preserves cofiber sequences.  As a result, all of our motivic calculations
must be compatible with analogous classical calculations.  For example,
there are maps of long exact sequences and spectral sequences 
from the motivic setting to 
the classical setting, but we will not make this precise.

\subsection{Equivariant motivic spectra}

Let $G$ be a group.  (In our application, we are concerned only with the
group $\Z/2$.)
For general reasons, there is a convenient homotopy theory
of $G$-equivariant motivic spectra, where weak equivalences are detected
by the underlying non-equivariant motivic spectra.
The homotopy theory of $G$-equivariant motivic spectra has an internal
function object.  This means that for any equivariant motivic spectra
$X$ and $Y$, there is an equivariant motivic spectrum
$F_G(X,Y)$ with appropriate adjointness properties.
There are other possible homotopy theories for
equivariant motivic spectra, but this is the one that is relevant to
homotopy fixed point constructions.

We apply the standard notion 
of a homotopy fixed point spectrum to our motivic setting.
Among many other places in the literature, homotopy fixed point spectra
are treated in \cite[Section 2.1]{Ostvaer}.  

The following definition is lifted straight from the classical setting.

\begin{definition}
Let $X$ be a motivic spectrum with an action by a group $G$. 
Define the homotopy fixed point spectrum $X^{hG}$ of $X$ 
to be $F_{G}(EG_{+},X).$
\end{definition}

\begin{lemma}
If $X$ is a ring spectrum with an action by $G$, then
$X^{hG}$ is also a ring spectrum.
\end{lemma}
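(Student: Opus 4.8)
The plan is to use the internal function object $F_G(-,-)$ to transport the ring structure on $X$ to $X^{hG} = F_G(EG_+, X)$, exactly as one does in the classical case. First I would recall that since we work only in the ``naive'' sense of ring spectra, a ring structure on $X$ means a multiplication map $\mu \colon X \Smash X \map X$ together with a unit $S^{0,0} \map X$, both equivariant (the $G$-action on $X \Smash X$ being diagonal), and satisfying associativity and unit axioms up to homotopy. The key input is that $EG_+$ is, up to equivariant weak equivalence, a commutative coalgebra object: the diagonal $EG \map EG \times EG$ induces an equivariant map $\Delta \colon EG_+ \map EG_+ \Smash EG_+$, and the projection $EG_+ \map S^{0,0}$ serves as a counit, with coassociativity and counit axioms holding on the nose (or up to equivariant homotopy, which suffices).

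The main step is then to assemble the product on $X^{hG}$. Using the lax monoidal structure of the internal function object, there is a natural equivariant map
\[
F_G(EG_+, X) \Smash F_G(EG_+, X) \map F_G(EG_+ \Smash EG_+, X \Smash X),
\]
and I would compose this with $F_G(\Delta, \mu)$ to obtain
\[
X^{hG} \Smash X^{hG} \map F_G(EG_+ \Smash EG_+, X \Smash X) \map F_G(EG_+, X) = X^{hG}.
\]
The unit map is $F_G$ applied to the counit $EG_+ \map S^{0,0}$ and the unit of $X$, together with the canonical identification $F_G(S^{0,0}, X) \simeq X$ after forgetting the action is irrelevant here since everything is equivariant; more precisely one uses $S^{0,0} \map F_G(EG_+, S^{0,0})$ adjoint to the counit, composed with $F_G(EG_+, \text{unit}_X)$. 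Associativity and the unit axioms for this new multiplication follow formally from the coassociativity and counit axioms for $EG_+$ together with the associativity and unit axioms for $\mu$, by diagram chases that take place entirely in the homotopy category of equivariant motivic spectra.

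I expect the only real obstacle to be bookkeeping about the monoidal structure: one must know that the homotopy theory of $G$-equivariant motivic spectra is symmetric monoidal with an internal hom satisfying the usual adjunction $\Map(X \Smash Y, Z) \cong \Map(X, F_G(Y,Z))$, and that smash products behave well enough with the (co)fibrant replacements that we have suppressed. Since the paper has already asserted the existence of $F_G(X,Y)$ ``with appropriate adjointness properties'' and works only with naive ring spectra, this is available, and the argument is purely formal. The realization functor to classical spectra sends this construction to the classical fact that $X^{hG}$ is a ring spectrum when $X$ is, providing a useful sanity check but not needed for the proof. In fact, since the construction is entirely formal and identical to the classical one, I would write the proof tersely: observe that $EG_+$ is a coalgebra up to equivariant homotopy, apply $F_G(-,X)$ to turn $X^{hG}$ into an algebra, and remark that the verification of the axioms is the same as in the classical setting.
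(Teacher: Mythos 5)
Your construction of the multiplication — smashing two elements of $F_G(EG_+,X)$ to land in $F_G(EG_+ \Smash EG_+, X \Smash X)$ and then composing with the map induced by the diagonal on $EG_+$ and the multiplication on $X$ — is exactly the proof given in the paper. The additional remarks on the unit and the associativity axioms are fine but go beyond what the paper records.
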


\begin{proof}
The multiplication map
$X^{hG} \Smash X^{hG} \map X^{hG}$ is the composition
\[
F_G(EG_+,X) \Smash F_G(EG_+,X) \map 
F_G(EG_+ \Smash EG_+, X \Smash X) \map
F_G(EG_+, X),
\]
where the first map is smash product of maps, and the second map
is induced by the diagonal $EG_+ \map EG_+ \Smash EG_+$ and the
multiplication $X \Smash X \map X$.
\end{proof}

The following theorem can be proved in the same way as the
analogous classical result.

\begin{theorem}
\label{motivicHFP} 
Let $G$ be a finite group, and let $X$ be a $G$-equivariant motivic spectrum.
The motivic homotopy fixed point spectral sequence is conditionally convergent
and takes the form
\[
E_{2}^{n,p,u}=H^{p}(G;\pi_{n+p,u}X) \Rightarrow
\pi_{n,u}(X^{hG}),
\]
where $H^p(G;-)$ is group cohomology.  
\end{theorem}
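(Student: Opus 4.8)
The plan is to realize $X^{hG}$ as the homotopy limit of a tower arising from the skeletal filtration of $EG$, exactly as in the classical case, and then to check that each ingredient survives passage to the motivic world. First, fix a model of $EG$ as a free $G$-CW motivic space --- for instance the realization of the normalized bar construction $B(\ast,G,G)$, whose set $C_p$ of nondegenerate $p$-simplices is finite because $G$ is finite. Writing $(EG)^{(p)}$ for the $p$-skeleton, we have $(EG)^{(-1)} = \emptyset$, $EG_+ = \colim_p (EG)^{(p)}_+$ as a sequential homotopy colimit, and for each $p \geq 0$ a cofiber sequence
\[
(EG)^{(p-1)}_+ \map (EG)^{(p)}_+ \map \bigvee_{C_p} G_+ \Smash S^{p,0},
\]
in which each top cell is free, with topological degree $p$ and weight $0$ coming from the simplicial coordinates.

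Next, apply $F_G(-,X)$ to the skeletal filtration. Since $F_G(-,X)$ converts homotopy colimits in the first variable into homotopy limits, one obtains a tower of motivic spectra $\{F_G((EG)^{(p)}_+,X)\}_p$ with contractible bottom stage and homotopy limit $X^{hG}$, in which the cofiber sequences above become fiber sequences. Using the free--forgetful adjunction $F_G(G_+ \Smash A, X) \simeq F(A,X)$, the $p$-th layer --- the homotopy fiber of the $p$-th structure map --- is identified with the underlying non-equivariant function spectrum $F\bigl(\bigvee_{C_p} S^{p,0}, X\bigr)$, that is,
\[
\prod_{C_p} \Sigma^{-p,0} X ,
\]
a finite product because $C_p$ is finite.

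Now run the spectral sequence of this tower. In filtration $p$ and total degree $n$ its $E_1$-term is $\pi_{n,u}\bigl(\prod_{C_p}\Sigma^{-p,0}X\bigr) = \prod_{C_p}\pi_{n+p,u}X = C^p(G;\pi_{n+p,u}X)$, the module of normalized group $p$-cochains with coefficients in $\pi_{n+p,u}X$. Comparing with the attaching maps of $B(\ast,G,G)$ identifies $d_1\colon E_1^{n,p,u}\map E_1^{n-1,p+1,u}$ with the bar codifferential $C^p(G;\pi_{n+p,u}X)\map C^{p+1}(G;\pi_{n+p,u}X)$ --- note that the coefficient module $\pi_{n+p,u}X$ agrees at source and target of $d_1$ --- so passing to cohomology gives $E_2^{n,p,u} = H^p(G;\pi_{n+p,u}X)$, as asserted. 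Conditional convergence is then automatic: this is the spectral sequence of a tower whose homotopy limit is $X^{hG}$, so the induced decreasing filtration of $\pi_{\ast,u}(X^{hG})$ is complete (the homotopy limit of its quotients is contractible), which is precisely Boardman's criterion for conditional convergence.

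The only genuine obstacle, everything else being a transcription of the classical argument, is verifying that the internal function object $F_G$ on $G$-equivariant motivic spectra has the formal properties invoked above: that it converts homotopy colimits in the first variable into homotopy limits, that it satisfies the free--forgetful identity $F_G(G_+\Smash A, X)\simeq F(A,X)$ on underlying spectra (equivalently on fixed points), and that $\pi_{n,u}$ commutes with the finite products and the homotopy limit appearing here. All of these follow from the stated internal adjointness of $F_G$, from the fact that weak equivalences of equivariant motivic spectra are detected on underlying non-equivariant spectra, and from the earlier observation that $\pi_{p,q}$ commutes with filtered colimits; with these in hand the proof is formally identical to the classical one.
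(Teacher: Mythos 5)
Your argument is correct and is exactly the one the paper intends: the paper offers no proof beyond the remark that the theorem "can be proved in the same way as the analogous classical result," and your write-up is precisely that classical skeletal-filtration-of-$EG$ argument transcribed to the motivic setting, with the relevant formal properties of $F_G$ and the Milnor sequence correctly identified as the only points needing verification. (Only a cosmetic caveat: your $C_p$ should denote the set of $G$-orbits of nondegenerate $p$-simplices so that it simultaneously indexes the free cells and the normalized cochains, and conditional convergence uses only the Milnor $\lim^1$ sequence, not that $\pi_{n,u}$ commutes with the homotopy limit.)
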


\subsection{Motivic connective covers}
\label{sec:ConCov}
We continue to work in $H\F_2$-complete stable motivic homotopy
theory over $\Spec \C$.

\begin{proposition}
\label{prop:factor}
For any map $f:X \map Y$ of motivic spectra, there is a factorization
\[
\xymatrix{
X \ar[r]^i & Z \ar[r]^p & Y
}
\]
of $f$ such that:
\begin{enumerate}
\item
$p$ is an isomorphism on $\pi_{a,b}$ if $a \geq 1$ and $a-b \geq 1$.
\item
$i$ is an isomorphism on $\pi_{a,b}$ if $a < 0$ or $a-b < 0$.
\item
$p$ is injective on $\pi_{0,b}$ if $b \leq 0$
and on $\pi_{a,a}$ if $a \geq 0$.
\item
$i$ is surjective on $\pi_{0,b}$ if $b \leq 0$ and
on $\pi_{a,a}$ if $a \geq 0$.
\end{enumerate}
\end{proposition}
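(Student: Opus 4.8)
The plan is to build the factorization by a transfinite cell-attachment process, exactly mimicking the classical construction of the connective cover but with the bookkeeping adapted to the two regions $\{a \geq 1,\ a-b \geq 1\}$ and its complement $\{a < 0\ \text{or}\ a-b < 0\}$, which are separated by the ``boundary'' locus $\{a=0,\ b \leq 0\}\cup\{a = b,\ a \geq 0\}$ appearing in parts (3) and (4). I would start by replacing $f$ by the inclusion of $X$ into the mapping cylinder, so that without loss of generality $f$ is a cofibration; then $Z$ will be built from $X$ by attaching motivic cells. The idea is to attach cells to kill the homotopy of the cofiber of $f$ in the ``bad'' region and to create no new homotopy there, while leaving the ``good'' region untouched.

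First I would set $Z_0 = X$ and attach cells of the form $S^{a,b}$ with $a \leq 0$ or $a - b \leq 0$ to hit generators of $\pi_{a,b}$ of the cofiber $Y/X$ in those bidegrees — using Proposition~\ref{prop:pi-compute} to know the relevant groups are concentrated where $a \geq 0$ and $b \leq a$, so only finitely many ``rows'' near the boundary are in play and the $\Z_2[\tau]$-module structure from the remark after Proposition~\ref{prop:pi-compute} controls the generators we must attack. Attaching a cell $S^{a,b}$ affects $\pi_{*,*}$ only in bidegrees $(a',b')$ with $a' \geq a$, so choosing $a \leq 0$ (resp.\ $a - b \leq 0$) guarantees we never disturb $\pi_{a',b'}$ for $a' \geq 1$ and $a'-b'\geq 1$; this is what makes $i$ an isomorphism in the region of part (2) and $p$ an isomorphism in the region of part (1). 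I would then iterate: after attaching cells to surject onto the relevant homotopy, attach further cells to kill the resulting kernel, alternating, and pass to the homotopy colimit $Z = \colim Z_n$, which is cellular over $X$ and hence (since $X$, $Y$ are spectra) fits in the desired factorization with $p$ the induced map $Z \map Y$.

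The verification of (1) and (2) is then the statement that the cofiber $Z/X$ has homotopy concentrated in $\{a \leq 0\ \text{or}\ a-b \leq 0\}$ while $Y/Z$ has homotopy concentrated in $\{a \geq 1,\ a-b\geq 1\}$, which follows from the cell-attachment bookkeeping together with the long exact sequences of the cofiber sequences $X \map Z \map Z/X$ and $Z \map Y \map Y/Z$ and an application of Proposition~\ref{prop:pi-cellular} (Whitehead) where needed. For parts (3) and (4) — the injectivity/surjectivity statements precisely on the boundary lines $\pi_{0,b}$ with $b \leq 0$ and $\pi_{a,a}$ with $a \geq 0$ — I would be careful to attach, in the very first stage, cells only of bidegree $(a,b)$ with $a < 0$ strictly or $a - b < 0$ strictly when hitting generators \emph{below} the boundary, and to handle the boundary bidegrees themselves only via the cofiber long exact sequence: on those lines $Y/Z$ has no homotopy (so $p$ is injective there via the boundary map $\pi_{a,b}(Y/Z) \map \pi_{a-1,b}(Z)$ being the relevant constraint) and $Z/X$ absorbs exactly the generators needed (so $i$ is surjective there).

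The main obstacle I anticipate is the interplay between the two grading directions: unlike the classical case where a single integer $n$ cleanly separates ``connective'' from ``coconnective,'' here the boundary of the good region is an $L$-shaped polygonal locus, and one must check that attaching a cell in one part of the bad region (say with $a \leq 0$) does not inadvertently create homotopy in the other part of the bad region (with $a - b \leq 0$, $a \geq 1$) in a way that then requires attaching cells that \emph{do} reach into the good region. Controlling this requires the vanishing line $\pi_{p,q} = 0$ for $q > p$ from Proposition~\ref{prop:pi-compute}(2): it ensures that the potentially problematic overlap region is in fact bounded, so the transfinite process terminates in a controlled way on each fixed line $a - b = \text{const}$ and each fixed $a$, and the homotopy colimit has the stated properties. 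The rest is routine diagram-chasing with the long exact sequences and the fact (cited in the excerpt) that $\pi_{*,*}$ commutes with filtered colimits.
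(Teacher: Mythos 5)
Your overall strategy -- factor $f$ by attaching motivic cells and use Proposition \ref{prop:pi-compute}(2) to control where the homotopy groups change -- is the paper's strategy (the paper simply runs the small object argument against the maps $S^{p,q} \map D^{p,q}$ together with the generating acyclic cofibrations). But your degree bookkeeping is reversed, and the reversal is fatal rather than cosmetic. The paper attaches cells along spheres $S^{p,q}$ with $p \geq 0$ \emph{and} $p - q \geq 0$, i.e., in the ``good'' region. Then $Z/X$ is built from the cofibers $D^{p,q}/S^{p,q} \simeq S^{p+1,q}$, and Proposition \ref{prop:pi-compute}(2) says $\pi_{a,b} S^{p+1,q} = \pi_{a-p-1,\,b-q}$ vanishes unless $a \geq p+1 \geq 1$ and $a-b \geq p+1-q \geq 1$; so $Z/X$ has homotopy concentrated in $\{a \geq 1,\ a-b\geq 1\}$, which via the long exact sequence is exactly what gives (2) and (4). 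The right lifting property of $p$ against $S^{p,q} \map D^{p,q}$ forces the fiber of $p$ to have vanishing $\pi_{a,b}$ for $a \geq 0$, $a-b\geq 0$, which gives (3) and (1). Note in particular that Proposition \ref{prop:pi-compute}(2) is used to constrain the homotopy of the \emph{attached spheres}, not of $Y/X$ (whose homotopy is unconstrained for general $Y$), which is where you invoke it.

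Your version attaches cells along $S^{a,b}$ with $a \leq 0$ or $a-b\leq 0$ and asserts that, since attaching a cell along $S^{a,b}$ only affects $\pi_{a',b'}$ for $a' \geq a$, choosing $a \leq 0$ ``guarantees we never disturb $\pi_{a',b'}$ for $a'\geq 1$.'' This is a non sequitur: if $a \leq 0$ then the affected range $a' \geq a$ \emph{contains} all $a' \geq 1$, so such cells disturb precisely the region they are supposed to preserve; to avoid disturbing $\{a' < 0 \text{ or } a'-b'<0\}$ one must attach cells with $a \geq 0$ and $a - b \geq 0$, which is the opposite constraint. Consistently with this, your stated verification targets are also swapped: if $Z/X$ had homotopy concentrated in $\{a\leq 0 \text{ or } a-b\leq 0\}$ and $Y/Z$ in $\{a\geq 1,\ a-b\geq 1\}$, the long exact sequences would make $i$ an isomorphism in the good region and $p$ an isomorphism in the bad region -- the reverse of (1) and (2). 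Once the cells are chosen in the good region, the ``L-shaped boundary'' and termination worries in your last paragraph evaporate: no cancellation between the two halves of the bad region can occur because no cell ever touches it, and the small object argument needs no boundedness input.
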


\begin{proof}
It is easiest to work with model categories.  We may assume that
$X$ and $Y$ are cofibrant and fibrant.

Let $S^{p,q} \map D^{p,q}$ be a cofibration whose target is contractible.
Construct
$Z$ by applying the small object argument to the 
set of generating acyclic cofibrations together with all maps of the form
$S^{p,q} \map D^{p,q}$, with $p \geq 0$ and $p-q \geq 0$.
Proposition \ref{prop:pi-compute}(2) ensures that $Z$ has the desired
motivic homotopy groups.
\end{proof}

\begin{remark}
The factorizations produced in Proposition \ref{prop:factor}
are functorial in $f$.
\end{remark}

\begin{corollary}
\label{cor:Postnikov}
Given a motivic spectrum $X$, there exists a motivic spectrum $PX$ and a map 
$X \rightarrow PX$ such that:
\begin{enumerate}
\item $\pi_{p,q} X \rightarrow \pi_{p,q} PX$ is an isomorphism for $p < 0$ 
or $p-q <0$, and
\item $\pi_{p,q} PX = 0$ if $p \geq 0$ and $p-q \geq 0$.
\end{enumerate}
\end{corollary}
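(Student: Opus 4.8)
The plan is to apply Proposition \ref{prop:factor} to the zero map $X \map 0$, where $0$ denotes the trivial motivic spectrum, and to take $PX$ to be the intermediate spectrum $Z$ appearing in the resulting factorization $X \map Z \map 0$, with first map $i$ and second map $p$.

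First I would read off what this factorization gives. Condition (2) of Proposition \ref{prop:factor} says that $i$ is an isomorphism on $\pi_{a,b}$ whenever $a < 0$ or $a - b < 0$; with $PX := Z$ this is exactly statement (1) of the corollary. For statement (2), the key observation is that $\pi_{a,b}(0) = 0$ for all $a$ and $b$, so whenever $p$ is an isomorphism, or even merely injective, on some $\pi_{a,b}$, that homotopy group of $Z$ must vanish. Condition (1) of Proposition \ref{prop:factor} then forces $\pi_{a,b} Z = 0$ for $a \geq 1$ and $a - b \geq 1$, and condition (3) forces $\pi_{0,b} Z = 0$ for $b \leq 0$ and $\pi_{a,a} Z = 0$ for $a \geq 0$.

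The only point requiring a moment of care is checking that these three vanishing ranges together exhaust the region $\{\, p \geq 0,\ p - q \geq 0 \,\}$ claimed in statement (2). If $p = 0$, then $p - q \geq 0$ means $q \leq 0$, which is covered by the first half of condition (3). If $p \geq 1$, then either $p - q \geq 1$, covered by condition (1), or else $p - q = 0$, i.e.\ $p = q \geq 1$, covered by the second half of condition (3). Hence $\pi_{p,q} PX = 0$ for every $(p,q)$ with $p \geq 0$ and $p - q \geq 0$, as desired.

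There is no real obstacle: this is a bookkeeping consequence of Proposition \ref{prop:factor}, and the subtlest feature is simply recognizing that the boundary line $p = q$ is handled by the injectivity-on-$\pi_{a,a}$ clause of condition (3) rather than by the isomorphism range of condition (1); note that condition (4) of Proposition \ref{prop:factor} is not needed for this statement. If one wants $PX$ to depend functorially on $X$, this follows from the functoriality of the factorization recorded in the remark after Proposition \ref{prop:factor}.
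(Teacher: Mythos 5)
Your proposal is correct and is exactly the paper's argument: the paper's proof is the one line ``Apply Proposition \ref{prop:factor} to the map $X \map *$,'' and you have simply spelled out the case analysis showing that conditions (1) and (3) of that proposition, applied with target the trivial spectrum, cover the whole region $p \geq 0$, $p - q \geq 0$. The bookkeeping is accurate, including the observation that the boundary cases $p=0$ and $p=q$ are handled by the injectivity clause.
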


\begin{proof}
Apply Proposition \ref{prop:factor} to the map $X \map *$.
\end{proof}

\begin{corollary}
\label{cor:ConCov}
Given a motivic spectrum $X$, there exists a motivic spectrum $CX$ and a map 
$CX \rightarrow X$ such that:
\begin{enumerate}
\item $\pi_{p,q} CX \rightarrow \pi_{p,q} X$ is an isomorphism for $p \geq 0$ 
and $p-q \geq 0$, and
\item $\pi_{p,q} CX = 0$ if $p < 0$ or $p-q < 0$.
\end{enumerate}
\end{corollary}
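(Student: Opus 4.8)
The plan is to deduce Corollary \ref{cor:ConCov} from Corollary \ref{cor:Postnikov} by the same fiber-sequence manipulation one uses to build classical connective covers from Postnikov truncations. First I would apply Corollary \ref{cor:Postnikov} to $X$ to obtain a map $X \map PX$, and then let $CX$ be the homotopy fiber of this map, so that there is a cofiber sequence $CX \map X \map PX$ of motivic spectra. Taking the associated long exact sequence of homotopy groups
\[
\cdots \map \pi_{p+1,q} PX \map \pi_{p,q} CX \map \pi_{p,q} X \map \pi_{p,q} PX \map \cdots
\]
and feeding in the two properties of $PX$ from Corollary \ref{cor:Postnikov}, I would read off the two desired properties of $CX$.

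The bookkeeping splits into the two stated ranges. For the range $p \geq 0$ and $p - q \geq 0$: here $\pi_{p,q} X \map \pi_{p,q} PX$ is an isomorphism, so to conclude $\pi_{p,q} CX \xrightarrow{\sim} \pi_{p,q} X$ from the long exact sequence I need the neighboring terms $\pi_{p+1,q} PX$ and $\pi_{p,q} PX$ to vanish — or rather I need enough exactness, which follows if $\pi_{p+1,q} PX = 0$ (to get surjectivity of $\pi_{p,q}CX \to \pi_{p,q}X$) together with injectivity coming from $\pi_{p,q}X \to \pi_{p,q}PX$ being injective. The point $\pi_{p,q} X \to \pi_{p,q} PX$ being an isomorphism already handles both the kernel and (part of the) cokernel; the one extra input needed is $\pi_{p+1,q} PX = 0$. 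For the range $p < 0$ or $p - q < 0$: here $\pi_{p,q} X \map \pi_{p,q} PX$ is an isomorphism, forcing $\pi_{p,q} CX = 0$ provided $\pi_{p+1,q} PX = 0$ as well, and the latter must be checked.

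The one genuine subtlety is precisely the indexing of the auxiliary vanishing statements: Corollary \ref{cor:Postnikov}(2) gives $\pi_{p,q} PX = 0$ when $p \geq 0$ and $p - q \geq 0$, and one needs to confirm that shifting $p$ up by one keeps us inside this region (it does, since $p+1 \geq 0$ and $p+1-q \geq 0$ whenever $p \geq 0$ and $p - q \geq 0$), and also that on the boundary strata $p = 0$ and $p = q$ the ``isomorphism'' clause of Corollary \ref{cor:Postnikov}(1) is not what one wants — there $PX$ is generally nonzero, so one should instead use the cleaner fact that for $p \geq 0$, $p-q \geq 0$ the map $X \to PX$ is an isomorphism and invoke exactness. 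I expect the main obstacle to be nothing more than organizing these boundary cases cleanly; alternatively one can sidestep the issue entirely by applying Proposition \ref{prop:factor} directly to the map $* \map X$, which produces the factorization $* \map CX \map X$ with the listed isomorphism and vanishing ranges built in, exactly as Corollary \ref{cor:Postnikov} was obtained from $X \to *$. That second route is the shortest and is the one I would actually write.
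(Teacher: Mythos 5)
Your second route --- apply Proposition \ref{prop:factor} directly to $* \map X$ --- is exactly the paper's proof, which consists of that one sentence; since that is the argument you say you would actually write, your proposal matches the paper. One caution about your first route: you have the two clauses of Corollary \ref{cor:Postnikov} swapped. In the range $p \geq 0$, $p-q \geq 0$ it is $\pi_{p,q}PX$ that \emph{vanishes} (the map $X \map PX$ is not an isomorphism there), and in the range $p<0$ or $p-q<0$ the map is an isomorphism but $\pi_{p+1,q}PX$ need \emph{not} vanish (e.g.\ $p+1$ may still be negative); what one actually needs there is surjectivity of $\pi_{p+1,q}X \map \pi_{p+1,q}PX$, which holds because that map is either an isomorphism or has zero target. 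With those corrections the fiber-sequence argument does go through, but as written the bookkeeping would not compile into a proof.
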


\begin{proof}
Apply Proposition \ref{prop:factor} to the map $* \map X$.
\end{proof}

\begin{definition}
\label{defn:ConCov}
The motivic spectrum $CX$ of Corollary \ref{cor:ConCov} is the
connective cover of $X$.
\end{definition}

\begin{remark} 
\label{rem:action} 
Let $G$ be a group.
The proof of Proposition \ref{prop:factor} works just as well in the
category of $G$-equivariant motivic spectra.  Therefore,
we can construct connective covers of 
$G$-equivariant motivic spectra.
\end{remark}

\subsection{Positive cellular motivic spectra}

\begin{definition}
\label{defn:positive}
The class of positive cellular motivic spectra is the smallest class of 
motivic spectra such that:
\begin{enumerate}
\item
$S^{p,q}$ is positive cellular if $p \geq 0$ and $p-q \geq 0$.
\item
If $f:X \map Y$ is any map such that $X$ and $Y$ are both positive cellular, 
then the cofiber of $f$ is also positive cellular.
\item
If $X$ is weakly equivalent to a positive cellular motivic spectrum,
then $X$ is positive cellular.
\item
The filtered colimit of a diagram of positive cellular motivic spectra
is positive cellular.
\end{enumerate}
\end{definition}

In essence, a positive cellular motivic spectrum is one that can be built
by attaching cells along the spheres $S^{p,q}$ with $p \geq 0$ and $p-q \geq 0$.

\begin{proposition}
The class of positive cellular motivic spectra is equal to the class
of cellular motivic spectra $X$ such that 
$\pi_{a,b} X$ is zero
if $a < 0$ or $a-b<0$.  
\end{proposition}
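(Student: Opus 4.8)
The plan is to prove the two inclusions separately. First I would show that every positive cellular motivic spectrum $X$ is cellular and satisfies $\pi_{a,b}X = 0$ for $a<0$ or $a-b<0$. Cellularity is immediate: the defining generators $S^{p,q}$ with $p\geq 0$ and $p-q\geq 0$ are spheres hence cellular, and cofibers and filtered colimits of cellular spectra are cellular (a cofiber of cellular spectra is a homotopy pushout, hence a homotopy colimit, so Definition \ref{defn:cellular}(3) applies). For the vanishing of homotopy groups, I would argue by the induction built into Definition \ref{defn:positive}: the generating spheres $S^{p,q}$ with $p\geq 0$, $p-q\geq 0$ have the stated vanishing by Proposition \ref{prop:pi-compute}(2); the cofiber of a map $f\colon X\map Y$ between such spectra fits into a long exact sequence in $\pi_{*,*}$, and the vanishing range is closed under this (if $\pi_{a,b}X$, $\pi_{a-1,b}X$, $\pi_{a,b}Y$, $\pi_{a-1,b}Y$ all vanish for $a<0$ or $a-b<0$, then so does $\pi_{a,b}(\mathrm{cofiber})$ in the same range, since $a<0$ implies $a-1<0$ and $a-b<0$ implies $(a-1)-b<0$); finally filtered colimits commute with $\pi_{p,q}$ by the proposition cited just before, so the vanishing passes to filtered colimits. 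The weak-equivalence clause is trivial.

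For the reverse inclusion, suppose $X$ is cellular with $\pi_{a,b}X = 0$ for $a<0$ or $a-b<0$. The strategy is to realize $X$ as (the filtered colimit of) a positive cellular spectrum up to weak equivalence, then invoke Proposition \ref{prop:pi-cellular}. The natural candidate is to use the connective-cover construction: Corollary \ref{cor:ConCov} produces $CX$ with a map $CX\map X$ that is an isomorphism on $\pi_{a,b}$ for $a\geq 0$ and $a-b\geq 0$, and $\pi_{a,b}CX=0$ otherwise. By hypothesis $\pi_{a,b}X$ already vanishes outside that range, so $CX\map X$ is an isomorphism on all $\pi_{a,b}$; if we knew $CX$ were cellular we could conclude $CX\map X$ is a weak equivalence by Proposition \ref{prop:pi-cellular}. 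But the cleaner route is to build $X$ directly as a positive cellular object. Since $X$ is cellular, it is a homotopy colimit of spheres; more usefully, one builds a cellular model of $X$ by attaching cells in order of increasing dimension. The vanishing range $\pi_{a,b}X=0$ for $a<0$ or $a-b<0$ means that in constructing such a model one never needs to attach a cell $S^{p,q}$ with $p<0$ or $p-q<0$: inductively, a map $S^{p-1,q}\map X_{n}$ detecting a generator or relation in $\pi_{p-1,q}$ is nonzero only when $p-1\geq 0$ and $(p-1)-q\geq 0$, i.e.\ exactly the positive range, and the contribution of cells in the non-positive range is invisible to $\pi_{*,*}X$ by the vanishing hypothesis, hence removable by Proposition \ref{prop:pi-cellular}. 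Thus $X$ is weakly equivalent to a spectrum built from $S^{p,q}$ with $p\geq 0$, $p-q\geq 0$ by iterated cofibers and filtered colimits, which is positive cellular by Definition \ref{defn:positive}(2),(4), and then $X$ itself is positive cellular by clause (3).

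The main obstacle is the reverse inclusion, specifically making precise the ``attach cells in order of increasing dimension'' argument and the claim that cells outside the positive range can be discarded. The subtlety is bigraded: a cell $S^{p,q}$ is harmless precisely when $\pi_{p,q}$ lands in the vanishing region, and one must check that the two conditions $p\geq 0$ and $p-q\geq 0$ are exactly complementary to the hypothesis $a<0$ or $a-b<0$ — which they are, so the bookkeeping works, but it needs care. An alternative, perhaps slicker, implementation is to run the small-object-argument machinery of Proposition \ref{prop:factor}: apply that factorization to $*\map X$ using only the cells $S^{p,q}\map D^{p,q}$ with $p\geq 0$, $p-q\geq 0$; the resulting $Z=CX$ is manifestly positive cellular (it is built from such cells by pushouts, i.e.\ cofibers, and colimits), the map $CX\map X$ is an isomorphism on all $\pi_{a,b}$ by Corollary \ref{cor:ConCov} together with the vanishing hypothesis on $X$, and since $CX$ is positive cellular hence cellular, and $X$ is cellular by assumption, Proposition \ref{prop:pi-cellular} gives that $CX\map X$ is a weak equivalence, whence $X$ is positive cellular by Definition \ref{defn:positive}(3). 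I would write up this second version, as it reuses machinery already in place and avoids an ad hoc cell-by-cell induction.
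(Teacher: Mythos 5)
Your proposal is correct and, in the version you say you would write up, is essentially identical to the paper's proof: the forward inclusion by checking that both the cellularity condition and the vanishing condition are closed under the four defining operations, and the reverse inclusion by applying Proposition \ref{prop:factor} to $* \map X$ to get a positive cellular $Z \map X$ that is a $\pi_{*,*}$-isomorphism, then invoking Proposition \ref{prop:pi-cellular}. The ad hoc cell-by-cell induction you considered and discarded is not needed, exactly as you concluded.
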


\begin{proof}
Consider the class of motivic spectra $X$ such that
$\pi_{a,b} X$ is zero if $a<0$ or $a-b<0$.  This class satisfies
the four properties of Definition \ref{defn:positive}.
Also, the class of cellular motivic spectra satisfies these four
properties.  This shows that
if $X$ is a positive cellular motivic spectrum, then 
$X$ is cellular and $\pi_{a,b} X$ is zero if $a<0$ or $a-b<0$.

Now suppose that $X$ is a cellular motivic spectrum such that
$\pi_{a,b} X$ is zero if $a<0$ or $a-b<0$.
Apply Proposition \ref{prop:factor} to the map $* \map X$ to obtain a map
$f:Z \map X$.  By construction, $Z$ is a positive cellular motivic spectrum,
and Proposition \ref{prop:factor} guarantees that 
$f$ induces an isomorphism on $\pi_{a,b}$ for all $a$ and $b$.
Since $Z$ and $X$ are both cellular, it follows from 
Proposition \ref{prop:pi-cellular} 
that $f$ is a weak equivalence.  Hence $X$ is positive.
\end{proof}

\begin{lemma}
\label{lem:positive-suspend}
Let $p \geq 0$ and $p-q \geq 0$.
If $X$ is a positive cellular motivic spectrum, then so is
$\Sigma^{p,q} X$.
\end{lemma}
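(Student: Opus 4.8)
The plan is to use the characterization of positive cellular spectra just established: a cellular spectrum $X$ is positive cellular if and only if $\pi_{a,b} X = 0$ whenever $a < 0$ or $a - b < 0$. So I would reduce the lemma to two separate verifications for $Y = \Sigma^{p,q} X$, namely that $Y$ is cellular and that $Y$ has the required vanishing of homotopy groups.

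First I would check cellularity. Since $X$ is positive cellular, it is in particular cellular, and the class of cellular motivic spectra is closed under smashing with spheres (indeed $\Sigma^{p,q} X = S^{p,q} \Smash X$, and smashing with $S^{p,q}$ is a homotopy colimit-preserving functor that sends $S^{r,s}$ to $S^{r+p,s+q}$, so it preserves the three defining closure properties of Definition \ref{defn:cellular}). Hence $\Sigma^{p,q} X$ is cellular.

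Next I would handle the homotopy groups. There is a natural isomorphism $\pi_{a,b}(\Sigma^{p,q} X) \cong \pi_{a-p, b-q}(X)$. Using the hypothesis $p \geq 0$ and $p - q \geq 0$: if $a < 0$ then $a - p < 0$, so $\pi_{a-p,b-q} X = 0$; and if $a - b < 0$ then $(a-p) - (b-q) = (a-b) - (p-q) < 0$ as well, since $p - q \geq 0$, so again $\pi_{a-p,b-q} X = 0$. Therefore $\pi_{a,b}(\Sigma^{p,q} X)$ vanishes whenever $a < 0$ or $a - b < 0$. Combining this with cellularity and applying the preceding proposition gives that $\Sigma^{p,q} X$ is positive cellular.

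I do not expect any real obstacle here; the only mild subtlety is making sure the indexing conventions line up, i.e.\ that the degree shift in $\pi_{a,b}$ under $\Sigma^{p,q}$ interacts correctly with both inequalities $a < 0$ and $a - b < 0$, which is exactly where the two hypotheses $p \geq 0$ and $p - q \geq 0$ are each used. An alternative, more hands-on route would be to induct directly on the construction of $X$ via the four clauses of Definition \ref{defn:positive}, using that $\Sigma^{p,q} S^{r,s} = S^{r+p,s+q}$ is positive cellular when $r,s \geq 0$-type bounds hold and that $\Sigma^{p,q}$ commutes with cofibers and filtered colimits; but the homotopy-group characterization makes the argument cleaner, so that is the approach I would present.
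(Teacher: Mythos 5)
Your proof is correct, but it takes a different route from the paper's. The paper argues directly from Definition \ref{defn:positive}: it considers the class of all motivic spectra $X$ for which $\Sigma^{p,q}X$ is positive cellular and checks that this class satisfies the four closure properties, hence contains every positive cellular spectrum. You instead invoke the characterization proposition immediately preceding the lemma (positive cellular $=$ cellular with $\pi_{a,b}=0$ for $a<0$ or $a-b<0$) and verify the two resulting conditions for $\Sigma^{p,q}X$; since that proposition is proved before the lemma and without reference to it, there is no circularity. Your degree bookkeeping is exactly right: $\pi_{a,b}(\Sigma^{p,q}X)\cong\pi_{a-p,b-q}X$, with $p\geq 0$ handling the case $a<0$ and $p-q\geq 0$ handling the case $a-b<0$, which makes transparent why both hypotheses are needed. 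The paper's closure-class argument is more self-contained and is the template reused in Proposition \ref{prop:positive-smash} and Lemma \ref{lem:pos-neg}; your argument buys a cleaner, more conceptual proof at the cost of leaning on the characterization proposition and on the (standard, but worth stating) fact that cellularity is preserved by smashing with a sphere, which you do justify via the three closure properties of Definition \ref{defn:cellular}. You also correctly identify the paper's actual proof as the alternative at the end of your write-up.
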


\begin{proof}
Consider the class of all motivic spectra such that
$\Sigma^{p,q} X$ is positive cellular.  We would like to show
that this class contains the positive cellular motivic spectra,
so it is enough to show that the class satisfies the four properties
of Definition \ref{defn:positive}.  These properties are easy to check.
\end{proof}

\begin{proposition}
\label{prop:positive-smash}
If $X$ and $Y$ are both positive cellular motivic spectra, then so is
$X \Smash Y$.
\end{proposition}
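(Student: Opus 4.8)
The plan is to fix one positive cellular motivic spectrum $X$ and show that the class of motivic spectra $Y$ for which $X \Smash Y$ is positive cellular contains all positive cellular spectra. As in the proofs of Lemma \ref{lem:positive-suspend} and the preceding proposition, it suffices to verify that this class satisfies the four closure properties of Definition \ref{defn:positive}. Property (3) is immediate because smashing with $X$ preserves weak equivalences (all our spectra are cofibrant). Property (4) follows because smash product commutes with filtered colimits and, by Definition \ref{defn:positive}(4), a filtered colimit of positive cellular spectra is positive cellular. Property (2) follows because smashing with $X$ preserves cofiber sequences, and the cofiber of a map between positive cellular spectra is positive cellular by Definition \ref{defn:positive}(2).

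The only property requiring real input is (1): if $S^{p,q}$ is a sphere with $p \geq 0$ and $p - q \geq 0$, then $X \Smash S^{p,q} = \Sigma^{p,q} X$ must be positive cellular. But this is exactly the content of Lemma \ref{lem:positive-suspend}. So once $X$ is fixed as positive cellular, the class of $Y$ such that $X \Smash Y$ is positive cellular contains all spheres $S^{p,q}$ with $p \geq 0$ and $p - q \geq 0$, and is closed under the remaining three operations; hence it contains every positive cellular spectrum.

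There is one bookkeeping point worth stating carefully: the argument shows, for each \emph{fixed} positive cellular $X$, that $X \Smash Y$ is positive cellular for all positive cellular $Y$. Since the hypothesis is symmetric in $X$ and $Y$, this is exactly what we want; there is no need to run a second induction on $X$. I expect the main (and only) obstacle to be making sure that the operations ``smash with $X$'' genuinely commute with the constructions in Definition \ref{defn:positive} on the nose after fibrant-cofibrant replacement — in particular that smashing with a fixed cofibrant spectrum preserves homotopy cofiber sequences and filtered homotopy colimits — but these are standard facts about the motivic stable model structure, already used implicitly elsewhere in this section, so no new work is required.

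\begin{proof}
Fix a positive cellular motivic spectrum $X$, and consider the class $\mathcal{C}_X$ of motivic spectra $Y$ such that $X \Smash Y$ is positive cellular.  We check that $\mathcal{C}_X$ satisfies the four properties of Definition \ref{defn:positive}; it then follows that $\mathcal{C}_X$ contains every positive cellular motivic spectrum, which is the assertion.

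For property (1), if $p \geq 0$ and $p - q \geq 0$ then $X \Smash S^{p,q} = \Sigma^{p,q} X$ is positive cellular by Lemma \ref{lem:positive-suspend}, so $S^{p,q} \in \mathcal{C}_X$.  For property (2), if $f : Y \map Y'$ is a map with $Y, Y' \in \mathcal{C}_X$, then $X \Smash Y \map X \Smash Y'$ is a map of positive cellular spectra, and its cofiber is $X \Smash (\text{cofiber of } f)$ since smashing with $X$ preserves cofiber sequences; by Definition \ref{defn:positive}(2) this cofiber is positive cellular, so the cofiber of $f$ lies in $\mathcal{C}_X$.  Property (3) holds because a weak equivalence $Y \map Y'$ induces a weak equivalence $X \Smash Y \map X \Smash Y'$, and positive cellularity is invariant under weak equivalence by Definition \ref{defn:positive}(3).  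Finally, property (4) holds because smash product commutes with filtered colimits, so the filtered colimit of a diagram in $\mathcal{C}_X$ is sent by $X \Smash -$ to a filtered colimit of positive cellular spectra, which is positive cellular by Definition \ref{defn:positive}(4).

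Thus $\mathcal{C}_X$ contains all positive cellular motivic spectra.  Since this holds for every positive cellular $X$, we conclude that $X \Smash Y$ is positive cellular whenever both $X$ and $Y$ are.
\end{proof}
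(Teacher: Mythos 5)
Your proof is correct and follows exactly the same strategy as the paper's: fix a positive cellular $X$, consider the class of $Y$ with $X \Smash Y$ positive cellular, and verify the four closure properties of Definition \ref{defn:positive}, with property (1) supplied by Lemma \ref{lem:positive-suspend}. No further comment is needed.
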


\begin{proof}
Fix a positive cellular motivic spectrum $X$, and consider the
class of all motivic spectra $Z$ such that $X \Smash Z$
is a positive cellular motivic spectrum.  We would like to show
that this class contains all positive motivic cellular spectra.
Thus, we only need to show that the class satisfies the four properties
of Definition \ref{defn:positive}.

Property (1) is Lemma \ref{lem:positive-suspend}.
Property (2) follows from the fact that smash product with $X$
preserves cofiber sequences.
Property (3) follows from the fact that smash product with $X$
preserves weak equivalences.
Property (4) follows from the fact that smash product with $X$
commutes with filtered colimits.
\end{proof}

\begin{lemma}
\label{lem:pos-neg}
Let $X$ be a positive cellular motivic spectrum, and let
$Y$ be a motivic spectrum such that $\pi_{a,b} Y$ is zero
if $a \geq 0$ and $a-b \geq 0$.
Then $[X,Y]$ equals zero.
\end{lemma}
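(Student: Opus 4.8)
The plan is to argue by induction over the cellular structure of $X$. The naive attempt---to show that the class of positive cellular motivic spectra $X$ with $[X,Y]=0$ satisfies the four closure properties of Definition~\ref{defn:positive}---breaks down at the cofiber step, because the long exact sequence in $[-,Y]$ attached to a cofiber sequence introduces a suspension. So instead I would let $\mathcal{C}$ be the class of positive cellular motivic spectra $X$ such that $[\Sigma^{p,q}X,Y]=0$ for \emph{every} pair $(p,q)$ with $p\geq 0$ and $p-q\geq 0$. Taking $(p,q)=(0,0)$, it suffices to prove that $\mathcal{C}$ contains all positive cellular motivic spectra, and by the minimality in Definition~\ref{defn:positive} it is enough to verify that $\mathcal{C}$ satisfies properties~(1)--(4) there.

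Property~(1): if $X=S^{a,b}$ with $a\geq 0$ and $a-b\geq 0$, then $\Sigma^{p,q}S^{a,b}=S^{a+p,\,b+q}$, so $[\Sigma^{p,q}X,Y]=\pi_{a+p,\,b+q}Y$; since $a+p\geq 0$ and $(a+p)-(b+q)=(a-b)+(p-q)\geq 0$, this group vanishes by the hypothesis on $Y$. Property~(3) is immediate, as both $[-,Y]$ and positive cellularity are invariant under weak equivalence.

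Property~(2): given a cofiber sequence $A\map B\map C$ with $A,B\in\mathcal{C}$ and an allowed pair $(p,q)$, apply $\Sigma^{p,q}$ and then $[-,Y]$ to get the exact sequence
\[
[\Sigma^{p+1,q}A,Y]\map [\Sigma^{p,q}C,Y]\map [\Sigma^{p,q}B,Y].
\]
Here $(p+1,q)$ is again an allowed pair, so the outer groups vanish because $A,B\in\mathcal{C}$, whence $[\Sigma^{p,q}C,Y]=0$; and $C$ is positive cellular by property~(2) for the positive cellular class itself. Property~(4): if $X$ is a filtered homotopy colimit of spectra $X_i\in\mathcal{C}$, then $\Sigma^{p,q}X$ is the homotopy colimit of the $\Sigma^{p,q}X_i$, so the Milnor $\lim^1$ exact sequence gives
\[
0\map {\lim}^1\,[\Sigma^{p+1,q}X_i,Y]\map [\Sigma^{p,q}X,Y]\map \lim\,[\Sigma^{p,q}X_i,Y]\map 0.
\]
Both outer terms vanish since $(p,q)$ and $(p+1,q)$ are allowed pairs and each $X_i\in\mathcal{C}$, hence $[\Sigma^{p,q}X,Y]=0$.

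The only genuine subtlety is the one the definition of $\mathcal{C}$ is designed to absorb: because the cofiber and colimit steps shift by a suspension, closure under all positive suspensions must be built into the inductive hypothesis from the outset rather than tracking $[X,Y]$ alone. A minor technical point is that the Milnor sequence is customarily stated for sequential homotopy colimits; for general filtered colimits one uses the standard reduction, or simply notes that the colimits occurring in the construction of positive cellular spectra may be taken sequential. No other ingredient is needed beyond the long exact sequence of a cofiber sequence and the vanishing hypothesis on $\pi_{*,*}Y$; in particular $Y$ need not be cellular, so Proposition~\ref{prop:pi-cellular} plays no role here.
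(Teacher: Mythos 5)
Your argument is correct, and it rests on the same underlying idea as the paper's proof: strengthen the statement so that it is closed under suspension, then verify the four closure properties of Definition~\ref{defn:positive}. The difference is in the packaging. You track $[\Sigma^{p,q}X,Y]$ by hand for all positive $(p,q)$ and run the long exact sequence and the Milnor $\lim^1$ sequence; the paper instead uses the simplicial enrichment and considers the class of $W$ with $\Map(W,Y)$ contractible. Since $\pi_n\Map(W,Y)=[\Sigma^n W,Y]$, contractibility already encodes the vanishing of maps out of all nonnegative (simplicial) suspensions, so the cofiber step is just the long exact sequence of a fiber sequence of simplicial sets, and --- this is where the paper's formulation genuinely buys something --- the filtered colimit step is immediate: $\Map(-,Y)$ sends filtered homotopy colimits to homotopy limits, and a homotopy limit of contractible simplicial sets is contractible. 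That disposes of exactly the point you flag as a ``minor technical point,'' namely that the Milnor sequence is native to sequential colimits; your fallback (reduce to sequential colimits, or invoke a $\lim^s$ argument) works but is the one place where your write-up is less clean than the paper's. Your observation that $Y$ need not be cellular and that Proposition~\ref{prop:pi-cellular} is not needed is accurate and matches the paper.
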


\begin{proof}
Recall that the category of motivic spectra is enriched over simplicial
sets.  This means that for all motivic spectra $X$ and $Y$, there
is a simplicial set $\Map(X,Y)$ such that
$[\Sigma^n X,Y]$ equals $\pi_n \Map(X,Y)$.

Consider the class of motivic spectra $W$ such that
$\Map(W,Y)$ is contractible.
It suffices to show that this class satisfies the four
properties of Definition \ref{defn:positive}.

Property (1) is satisfied by the assumption on $Y$.
Property (2) is satisfied by the fact that the functor
$\Map(-,Y)$ takes cofiber sequences of motivic spectra
to fiber sequences of simplicial sets.
Property (3) follows from the fact that $\Map(-,Y)$
takes weak equivalences of motivic spectra to weak equivalences of
simplicial sets.
Property (4) follows from the fact that $\Map(-,Y)$
takes filtered colimits, which are homotopy colimits,
to homotopy limits.
\end{proof}

\begin{corollary}
\label{cor:pos-neg-isomorphism}
Let $X$, $Y$, and $Z$ be cellular motivic spectra.  Suppose that
$X$ is positive, and suppose that there is a map $Y \map Z$
that is:
\begin{enumerate}
\item
injective on $\pi_{a,b}$ if $a \geq 0$ and $a-b \geq 0$;
\item
surjective on $\pi_{a,b}$ if $a \geq -1$ and $a-b \geq -1$.
\end{enumerate}
Then
$[X,Y] \map [X,Z]$
is an isomorphism.
\end{corollary}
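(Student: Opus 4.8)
The plan is to build the map $Y \map Z$ into a cofiber sequence and reduce the statement to an application of Lemma~\ref{lem:pos-neg}. First I would factor the given map $Y \map Z$ (or rather complete it to a cofiber sequence) and name the cofiber $C$, so that there is a cofiber sequence $Y \map Z \map C$, and hence after rotating, $\Sigma^{-1} C \map Y \map Z$. Applying the functor $[X,-]$ to this cofiber sequence produces an exact sequence
\[
[X, \Sigma^{-1} C] \map [X,Y] \map [X,Z] \map [X,C].
\]
(More precisely, one should use the long exact sequence coming from the fiber sequence $\Map(X, \Sigma^{-1}C) \map \Map(X,Y) \map \Map(X,Z)$, exactly as in the proof of Lemma~\ref{lem:pos-neg}.) Then to conclude that $[X,Y] \map [X,Z]$ is an isomorphism, it suffices to show that $[X,C]$ and $[X,\Sigma^{-1}C]$ — or, what is enough, $[X, \Sigma^{j} C]$ for $j = 0$ and $j = -1$, using the simplicial mapping space as in Lemma~\ref{lem:pos-neg} — both vanish.

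The content, then, is a computation of the homotopy groups of $C$. Since $Z$ and $Y$ may not both be positive, I cannot directly invoke Lemma~\ref{lem:pos-neg} with $C$ in place of $Y$; instead I would read off $\pi_{a,b} C$ from the long exact sequence
\[
\pi_{a,b} Y \map \pi_{a,b} Z \map \pi_{a,b} C \map \pi_{a-1,b} Y \map \pi_{a-1,b} Z
\]
associated to the cofiber sequence. Hypothesis (1) says $\pi_{a,b} Y \map \pi_{a,b} Z$ is injective when $a \geq 0$ and $a - b \geq 0$; hypothesis (2) says $\pi_{a,b} Y \map \pi_{a,b} Z$ is surjective when $a \geq -1$ and $a - b \geq -1$. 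I want to conclude that $\pi_{a,b} C = 0$ whenever $a \geq 0$ and $a - b \geq 0$. So fix such $(a,b)$. Surjectivity of $\pi_{a,b} Y \map \pi_{a,b} Z$ (which holds since $a \geq 0 \geq -1$ and $a - b \geq 0 \geq -1$) kills the contribution of $\pi_{a,b} Z$, while injectivity of $\pi_{a-1,b} Y \map \pi_{a-1,b} Z$ is needed to show the connecting map out of $\pi_{a,b} C$ is zero; this injectivity holds precisely when $a - 1 \geq 0$ and $(a-1) - b \geq 0$, i.e. when $a \geq 1$ and $a - b \geq 1$. The remaining boundary cases $a = 0$ (with $b \leq 0$) and $a - b = 0$ (i.e. $b = a \geq 0$) need separate attention, using surjectivity of $\pi_{a-1,b} Y \map \pi_{a-1,b} Z$ in the range $a - 1 \geq -1$, $(a-1) - b \geq -1$ together with exactness at $\pi_{a-1,b} Y$ — one should check that the image of $\pi_{a-1,b}C \to \pi_{a-1,b}Y$ lands in the kernel appropriately. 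The upshot is $\pi_{a,b} C = 0$ for $a \geq 0$, $a - b \geq 0$; this is exactly the hypothesis needed to apply Lemma~\ref{lem:pos-neg} with $Y$ replaced by $C$ (and, since shifting preserves the vanishing range appropriately together with the mapping-space formulation, also with $\Sigma^{-1}C$ replaced into the role of the target when computing $\pi_0$ and $\pi_{-1}$ of the mapping space). Therefore $[X, \Sigma^j C] = 0$ in the relevant degrees, and the exact sequence forces $[X,Y] \map [X,Z]$ to be an isomorphism.

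The main obstacle I anticipate is bookkeeping at the boundary of the wedge-shaped region $\{a \geq 0,\ a - b \geq 0\}$: the hypotheses on $Y \map Z$ are stated with the slightly larger surjectivity range $a \geq -1$, $a - b \geq -1$ precisely to handle the off-by-one shift in the connecting homomorphism, and one must verify carefully that $\pi_{a,b}C = 0$ holds all the way to the edges $a = 0$ and $b = a$, not just in the interior. A secondary point is that $C$ is cellular (being the cofiber of a map of cellular spectra) so that the mapping-space / $[X,-]$ formalism of Lemma~\ref{lem:pos-neg} applies verbatim; this is immediate from Definition~\ref{defn:cellular}. Modulo these routine verifications, the argument is a formal diagram chase combined with Lemma~\ref{lem:pos-neg}.
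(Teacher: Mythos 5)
Your outline---complete $Y \map Z$ to a cofiber sequence, compute the homotopy of the (de)suspended cofiber from the long exact sequence, and feed the result into Lemma~\ref{lem:pos-neg}---is exactly the paper's strategy; the paper phrases it with the fiber $F = \Sigma^{-1}C$ and the fiber sequence $\Map(X,F) \map \Map(X,Y) \map \Map(X,Z)$. The injectivity half of your argument is sound: the long exact sequence gives $\pi_{a,b} F = 0$ for $a \geq 0$ and $a-b \geq 0$ (injectivity at $(a,b)$ kills the image of $\pi_{a,b}F$ in $\pi_{a,b}Y$, and surjectivity at $(a+1,b)$ kills its kernel), so $[X,\Sigma^{-1}C] = [X,F] = 0$ by Lemma~\ref{lem:pos-neg}.

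The gap is in the surjectivity half. You reduce it to $[X,C]=0$, which via Lemma~\ref{lem:pos-neg} requires $\pi_{a,b}C = 0$ on the whole region $a \geq 0$, $a-b \geq 0$; but, as your own analysis shows, the long exact sequence only delivers this for $a \geq 1$ and $a-b \geq 1$. On the boundary, exactness identifies $\pi_{0,b}C$ with $\ker(\pi_{-1,b}Y \map \pi_{-1,b}Z)$ (using surjectivity at $(0,b)$ to see that the boundary map out of $\pi_{0,b}C$ is injective), and the hypotheses say nothing about injectivity at $(-1,b)$. Your proposed repair via ``surjectivity of $\pi_{a-1,b}Y \map \pi_{a-1,b}Z$'' cannot work: surjectivity of a map gives no control over its kernel, which is exactly what $\pi_{0,b}C$ computes to be. In fact the intermediate claim is false: take $Z = *$ and $Y = \Sigma^{-1,0}H\F_2$ (cellular, with $\pi_{a,b}Y$ nonzero only when $a=-1$ and $b \leq 0$, so hypotheses (1) and (2) hold vacuously); then $C = H\F_2$ and $[S^{0,0},C] = \F_2 \neq 0$, even though the conclusion of the corollary still holds. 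What surjectivity of $[X,Y]\map[X,Z]$ actually needs is only that the composite $[X,Z]\map[X,C]$ vanish, not that the group $[X,C]$ vanish. This is why the paper argues instead that $\Map(X,F)$ is contractible and deduces that $\Map(X,Y)\map\Map(X,Z)$ is a weak equivalence from the fiber sequence of mapping spaces; if you want the $\pi_0$-surjectivity there spelled out, the clean route is to run the induction of Lemma~\ref{lem:pos-neg} on the class of $W$ for which $\Map(W,Y)\map\Map(W,Z)$ is a weak equivalence, checking the sphere case directly from hypotheses (1) and (2).
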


\begin{proof}
Let $F$ be the fiber of $Y \map Z$.  
By the long exact sequence
\[
\cdots \map \pi_{a+1,b} Y \map \pi_{a+1,b} Z \map
\pi_{a,b} F \map \pi_{a,b} Y \map \pi_{a,b} Z \map \cdots,
\]
$\pi_{a,b} F$ is zero if $a \geq 0$ and $a-b \geq 0$.
It follows from Lemma \ref{lem:pos-neg} that
$\Map(X,F)$ is contractible.
By consideration of the fiber sequence
\[
\Map(X,F) \map \Map(X,Y) \map \Map(X,Z)
\]
of simplicial sets, it follows that
$\Map(X,Y) \map \Map(X,Z)$ is a weak equivalence.
\end{proof}

\begin{corollary}
\label{cor:pos-neg-isomorphism-2}
Let $X$, $Y$, and $Z$ be cellular motivic spectra.  Suppose that
$\pi_{a,b} X$ is zero if $a \geq 0$ and $a-b \geq 0$.
Also suppose that there is a map $Y \map Z$
that is:
\begin{enumerate}
\item
injective on $\pi_{a,b}$ if $a < 1$ or $a-b < 1$;
\item
surjective on $\pi_{a,b}$ if $a < 0$ or $a-b < 0$.
\end{enumerate}
Then
$[Y,X] \map [Z,X]$
is an isomorphism.
\end{corollary}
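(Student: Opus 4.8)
The plan is to run the argument dual to the proof of Corollary \ref{cor:pos-neg-isomorphism}, mapping \emph{into} $X$ rather than out of a fixed positive spectrum. First I would let $C$ be the cofiber of the given map $Y \map Z$, so that there is a cofiber sequence $Y \map Z \map C$, and hence a long exact sequence
\[
\cdots \map \pi_{a,b} Y \map \pi_{a,b} Z \map \pi_{a,b} C \map \pi_{a-1,b} Y \map \pi_{a-1,b} Z \map \cdots.
\]
Using hypotheses (1) and (2) on $Y \map Z$, I would read off that $\pi_{a,b} C$ is zero whenever $a \geq 1$ and $a-b \geq 1$: surjectivity of $\pi_{a,b} Y \map \pi_{a,b} Z$ in the appropriate range kills the incoming part, and injectivity of $\pi_{a-1,b} Y \map \pi_{a-1,b} Z$ kills the outgoing part. (One has to be a little careful lining up the index shifts so that ``$a < 1$ or $a-b<1$'' for injectivity and ``$a<0$ or $a-b<0$'' for surjectivity combine correctly; this is the only place where the exact numerical hypotheses get used, and it is routine.)

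Next I would want the analogue of Lemma \ref{lem:pos-neg} with the roles of the two spectra swapped: if $W$ is a cellular motivic spectrum with $\pi_{a,b} W = 0$ for $a \geq 1$ and $a-b \geq 1$ (which is what $C$ will satisfy after a suitable reindexing of the vanishing line, i.e. $\Sigma^{-1,0} C$ lies in the class with $\pi_{a,b}=0$ for $a \geq 0$, $a-b \geq 0$ — or more simply one applies the lemma directly to $C$), and if $\pi_{a,b} X = 0$ for $a \geq 0$ and $a-b \geq 0$, then $[W,X]$ and indeed $\Map(W,X)$ is contractible. This is proved by exactly the same four-property argument as Lemma \ref{lem:pos-neg}: consider the class of $W$ with $\Map(W,X)$ contractible, and check it satisfies the defining properties of a suitable ``negative cellular'' class — property (1) from the vanishing hypothesis on $X$, property (2) since $\Map(-,X)$ sends cofiber sequences to fiber sequences, property (3) since it preserves weak equivalences, property (4) since it sends filtered colimits to homotopy limits. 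The only new ingredient relative to the excerpt is that one needs the evident ``negative'' analogue of the positive cellular class (built from spheres $S^{p,q}$ with $p \leq 0$ or $p-q \leq 0$) together with the observation, proved just as Proposition for positive cellular spectra, that it coincides with the cellular spectra whose homotopy vanishes above the line; alternatively, one can avoid naming this class and argue directly with $C$ since $C$ is cellular (as the cofiber of a map of cellular spectra) and its connective-cover-type factorization exhibits it appropriately.

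Finally, applying this vanishing to $W = C$ (or $\Sigma^{-1,0}C$, matching the bookkeeping above) gives $\Map(C,X) \simeq *$, and then the fiber sequence of simplicial sets
\[
\Map(C,X) \map \Map(Z,X) \map \Map(Y,X)
\]
coming from the cofiber sequence $Y \map Z \map C$ shows $\Map(Z,X) \map \Map(Y,X)$ is a weak equivalence, hence $[Z,X] \map [Y,X]$ is an isomorphism. I expect the main obstacle — really the only point requiring care — to be the index arithmetic: one must verify that the precise vanishing range for $\pi_{*,*} C$ forced by hypotheses (1) and (2), combined with the vanishing hypothesis on $\pi_{*,*} X$, is exactly what is needed to feed into the (dual) Lemma \ref{lem:pos-neg}, since the two lemmas use complementary half-planes and the off-by-one shifts in the long exact sequence must be tracked correctly. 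Everything else is a formal transcription of the proofs of Lemma \ref{lem:pos-neg} and Corollary \ref{cor:pos-neg-isomorphism}.
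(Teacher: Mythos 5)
Your overall strategy --- take the cofiber $C$ of $Y \map Z$, show $\Map(C,X)$ is contractible, and conclude from the fiber sequence of mapping spaces --- is exactly the paper's, but you have computed the vanishing range of $\pi_{*,*}C$ backwards, and this error propagates into an appeal to a ``dual'' lemma that is false. From the long exact sequence, $\pi_{a,b}C$ vanishes when $\pi_{a,b}Y \map \pi_{a,b}Z$ is surjective and $\pi_{a-1,b}Y \map \pi_{a-1,b}Z$ is injective. Hypothesis (2) gives the surjectivity when $a<0$ or $a-b<0$, and hypothesis (1) gives the injectivity when $a-1<1$ or $(a-1)-b<1$, which is implied by $a<0$ or $a-b<0$. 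So the correct conclusion is that $\pi_{a,b}C = 0$ if $a<0$ or $a-b<0$; that is, $C$ (which is cellular, being the cofiber of a map of cellular spectra) is \emph{positive} cellular. The hypotheses say nothing at all about $\pi_{a,b}Y \map \pi_{a,b}Z$ in the region $a \geq 1$ and $a-b \geq 1$, so your claimed vanishing of $\pi_{a,b}C$ there cannot be deduced.

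Once $C$ is known to be positive cellular, no new lemma is needed: Lemma \ref{lem:pos-neg} applies verbatim, with $C$ in the role of the positive cellular spectrum and $X$ (whose homotopy vanishes for $a \geq 0$ and $a-b \geq 0$) in the role of the target, giving $\Map(C,X) \simeq *$. By contrast, the ``dual'' lemma you propose --- that $\Map(W,X)$ is contractible whenever \emph{both} $W$ and $X$ have homotopy concentrated below the line --- is false: the base case of the four-property induction would require $[S^{p,q},X] = \pi_{p,q}X$ to vanish for the generating spheres of your ``negative'' class, and that is precisely the region where the homotopy of $X$ is allowed to live. Taking $W = X$ any nontrivial spectrum of this kind, the identity map gives a nonzero class in $[W,X]$. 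So the index arithmetic you flagged as ``routine'' is in fact the crux, and getting it right is what lets the argument close using the existing Lemma \ref{lem:pos-neg} rather than a new one.
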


\begin{proof}
Let $C$ be the cofiber of $Y \map Z$.
By the long exact sequence
\[
\cdots \map \pi_{a,b} Y \map \pi_{a,b} Z \map
\pi_{a,b} C \map \pi_{a-1,b} Y \map \pi_{a-1,b} Z \map \cdots,
\]
$\pi_{a,b} C$ is zero if $a < 0$ or $a-b < 0$.
In other words, $C$ is positive cellular.
It follows from Lemma \ref{lem:pos-neg} that
$\Map(C,X)$ is contractible.
By consideration of the fiber sequence
\[
\Map(C,X) \map \Map(Y,X) \map \Map(Z,X)
\]
of simplicial sets, it follows that
$\Map(Y,X) \map \Map(Z,X)$ is a weak equivalence.
\end{proof}

The following result is a straightforward motivic version of 
\cite[Lemma 2.11]{May}.

\begin{proposition}
\label{prop:ConCov-ring}
If $X$ is a cellular motivic ring spectrum, 
then $CX$ has a unique (up to homotopy)
multiplication such that the map
$CX \map X$ is a map of motivic ring spectra.
\end{proposition}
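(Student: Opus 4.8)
The plan is to mimic the classical argument of \cite[Lemma 2.11]{May}, using the connective-cover machinery of Section \ref{sec:ConCov} together with the vanishing result Lemma \ref{lem:pos-neg} and its corollaries. Write $C = CX$, with structure map $p \colon C \map X$. The multiplication on $C$ should be the unique map making the evident square commute, so the first task is to produce it.

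First I would observe that $C \Smash C$ is a positive cellular motivic spectrum: by Corollary \ref{cor:ConCov}, $C$ is cellular with $\pi_{a,b} C = 0$ for $a<0$ or $a-b<0$, hence $C$ is positive cellular by the proposition characterizing positive cellular spectra, and then $C \Smash C$ is positive cellular by Proposition \ref{prop:positive-smash}. Now consider the composite $C \Smash C \xrightarrow{p \Smash p} X \Smash X \xrightarrow{\mu} X$, where $\mu$ is the (naive) multiplication on $X$. I want to lift this along $p \colon C \map X$. For this I would apply Corollary \ref{cor:pos-neg-isomorphism} with the source $C \Smash C$ (which is positive) and the map $p \colon C \map X$: by Corollary \ref{cor:ConCov} the map $p$ is an isomorphism on $\pi_{a,b}$ for $a \geq 0$ and $a - b \geq 0$, and is surjective on $\pi_{a,b}$ whenever $a \geq -1$ and $a-b \geq -1$ — indeed, for such $(a,b)$ either $a \geq 0$ and $a-b \geq 0$ (isomorphism) or $\pi_{a,b} X$ is already accounted for at the boundary, but more carefully: the only new cases are $a = -1$ or $a - b = -1$, where $\pi_{a,b} C = 0$ and one needs $\pi_{a,b} X$ to vanish too; here Proposition \ref{prop:pi-compute}(2) does \emph{not} immediately apply, so instead I would use the full strength of Proposition \ref{prop:factor}(3)--(4), which controls $p$ precisely on the relevant boundary groups $\pi_{0,b}$ with $b \leq 0$ and $\pi_{a,a}$. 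Thus the hypotheses of Corollary \ref{cor:pos-neg-isomorphism} are met, so $[C \Smash C, C] \map [C \Smash C, X]$ is an isomorphism, and there is a unique map $\mu_C \colon C \Smash C \map C$ with $p \circ \mu_C = \mu \circ (p \Smash p)$.

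Next I would produce the unit: the unit $S^{0,0} \map X$ lifts uniquely through $p$ since $S^{0,0}$ is positive cellular and the same instance of Corollary \ref{cor:pos-neg-isomorphism} gives $[S^{0,0}, C] \cong [S^{0,0}, X]$. Then the associativity, left-unit, and right-unit diagrams for $(\mu_C, \text{unit}_C)$ are each a pair of maps out of a positive cellular spectrum ($C^{\Smash 3}$ or $C$, all positive cellular by Lemma \ref{lem:positive-suspend} and Proposition \ref{prop:positive-smash}) into $C$; applying $[-,C] \cong [-,X]$ again, each such diagram commutes in $C$ because its image in $X$ commutes, as $X$ is a naive ring spectrum. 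This simultaneously gives existence and the "unique up to homotopy" clause: any multiplication on $C$ making $p$ a ring map has image $\mu \circ (p \Smash p)$ in $[C \Smash C, X]$, hence agrees with $\mu_C$ by injectivity.

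The main obstacle is the bookkeeping in verifying the hypotheses of Corollary \ref{cor:pos-neg-isomorphism} — specifically confirming that $p \colon C \map X$ is surjective on $\pi_{a,b}$ for $a \geq -1$, $a - b \geq -1$, which is not quite immediate from Corollary \ref{cor:ConCov} alone and requires tracing back through Proposition \ref{prop:factor}. Once that single point is pinned down, everything else is a formal consequence of the $[-,C] \cong [-,X]$ isomorphism applied to the relevant coherence diagrams, exactly as in the classical case. Everything here is genuinely internal to motivic homotopy theory, so no appeal to realization is needed.
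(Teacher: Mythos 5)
Your argument is essentially the paper's proof: show that $CX \Smash CX$ is positive cellular via Proposition \ref{prop:positive-smash} and apply Corollary \ref{cor:pos-neg-isomorphism} to the connective cover map $CX \map X$ to obtain the unique lift of the multiplication. The paper's version is shorter---it omits the unit and associativity checks and cites Corollary \ref{cor:pos-neg-isomorphism} without the boundary-case bookkeeping you worry about, which is indeed harmless because the proof of that corollary only needs surjectivity on $\pi_{a+1,b}$ for $a \geq 0$ and $a-b \geq 0$, a range in which the connective cover map is already an isomorphism.
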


\begin{proof}
First note that $CX$ is a positive cellular motivic spectrum.
By Proposition \ref{prop:positive-smash},
$CX \Smash CX$ is a positive cellular motivic spectrum,
so the map
\[
[CX \Smash CX, CX] \map [CX \Smash CX, X]
\]
is an isomorphism
by Corollary \ref{cor:pos-neg-isomorphism}.
Therefore, there is a unique homotopy class of maps $CX \Smash CX \map CX$
such that the diagram
\[
\xymatrix{
CX \Smash CX \ar[r] \ar@{-->}[d] & X \Smash X \ar[d] \\
CX \ar[r] & X  }
\]
commutes.
\end{proof}

Recall that classical Eilenberg-Mac Lane spectra are unique in the
following sense.  If $H$ and $H'$ are any two spectra such that
$\pi_0 H$ and $\pi_0 H'$ are isomorphic while $\pi_k H$ and
$\pi_k H'$ are zero if $k \neq 0$, then $H$ and $H'$ are
weakly equivalent.  We next prove a motivic version.

\begin{proposition}
\label{prop:EM-unique}
Suppose that $H$ and $H'$ are cellular motivic spectra such that
$\pi_{0,*} H$ and $\pi_{0,*} H'$ are isomorphic as $\Z_2[\tau]$-modules.
Suppose also that $\pi_{a,b} H$ and $\pi_{a,b} H'$
are zero unless $a=0$ and $b \leq 0$.  Then $H$ and $H'$
are weakly equivalent.
\end{proposition}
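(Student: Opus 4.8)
The plan is to build a third cellular motivic spectrum $\widetilde H$ together with weak equivalences $\widetilde H \to H$ and $\widetilde H \to H'$, which exhibits $H$ and $H'$ as weakly equivalent. The reason for introducing $\widetilde H$ is that $H$ is given to us only as an abstract cellular spectrum; in order to construct a comparison map, I first replace it by an explicit ``CW model'' assembled from spheres, built cell by cell so that maps out of it can be controlled by obstruction theory. This is the motivic analogue of the standard proof that a connective spectrum with homotopy concentrated in degree $0$ is determined up to equivalence by that homotopy group.

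\emph{Construction of $\widetilde H$ and $g\colon \widetilde H\to H$.} Choose a generating set $\{x_\alpha\}$ of the $\Z_2[\tau]$-module $\pi_{0,*}H$, with $x_\alpha$ of weight $q_\alpha$, and let $\widetilde H_0=\bigvee_\alpha S^{0,q_\alpha}$ with $g_0\colon\widetilde H_0\to H$ realizing the $x_\alpha$. By Proposition~\ref{prop:pi-compute}, $\pi_{0,*}\widetilde H_0=\bigoplus_\alpha\Z_2[\tau]$ and $\pi_{a,*}\widetilde H_0=0$ for $a<0$, so $g_0$ is surjective on $\pi_{0,*}$. Now proceed inductively: given $g_n\colon\widetilde H_n\to H$, choose generators of the $\Z_2[\tau]$-module $\pi_{n,*}\widetilde H_n$ (when $n\ge 1$), or of $\ker(\pi_{0,*}g_0)$ (when $n=0$); represent them by a map $\lambda_n$ from a wedge of spheres $S^{n,*}$ into $\widetilde H_n$; and let $\widetilde H_{n+1}$ be the cofiber of $\lambda_n$. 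Because $[\bigvee S^{n,s},H]=\prod_s\pi_{n,s}H$, because $\pi_{n,*}H=0$ for $n\ge 1$, and because the chosen classes map to $0$ in $\pi_{0,*}H$ when $n=0$, the composite $g_n\lambda_n$ is nullhomotopic, so $g_n$ extends to $g_{n+1}\colon\widetilde H_{n+1}\to H$. Using the cofiber long exact sequence and the vanishing $\pi_{a,b}S^{0,0}=0$ for $a<0$ (Proposition~\ref{prop:pi-compute}), one checks inductively that $\pi_{a,*}g_n$ is an isomorphism for $a\le n-1$: attaching $S^{n,*}$-cells leaves $\pi_{a,*}$ untouched for $a\le n-1$, while $\pi_{n,*}\widetilde H_{n+1}$ becomes $\pi_{n,*}H$ (namely $0$ for $n\ge 1$, and the correct quotient for $n=0$). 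Taking $\widetilde H$ to be the homotopy colimit of $\widetilde H_0\to\widetilde H_1\to\cdots$ and using that $\pi_{a,*}$ commutes with filtered colimits, the map $g$ is an isomorphism on every $\pi_{a,b}$; since $\widetilde H$ and $H$ are cellular, $g$ is a weak equivalence by Proposition~\ref{prop:pi-cellular}.

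\emph{Construction of $\psi\colon\widetilde H\to H'$.} Let $f\colon\pi_{0,*}H\xrightarrow{\ \cong\ }\pi_{0,*}H'$ be the given isomorphism. Define $\widetilde H_0\to H'$ by realizing the classes $f(x_\alpha)$; since $f$ is surjective and $\Z_2[\tau]$-linear, this is surjective on $\pi_{0,*}$, and the induced map $\pi_{0,*}\widetilde H_0\to\pi_{0,*}H'$ equals $f\circ\pi_{0,*}g_0$. The one place where injectivity of $f$ is used is the identity $\ker(\pi_{0,*}g_0)=\ker(f\circ\pi_{0,*}g_0)$, which says that the $0$-cells used to form $\widetilde H_1$ also map to $0$ in $\pi_{0,*}H'$; hence $\widetilde H_0\to H'$ extends over $\widetilde H_1$ with induced map on $\pi_{0,*}$ an isomorphism agreeing with $f$ under the identification $\pi_{0,*}\widetilde H_1\cong\pi_{0,*}H$. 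At each later stage $n\ge 1$ the extension over $\widetilde H_{n+1}$ is automatic, since $[\bigvee S^{n,s},H']=\prod_s\pi_{n,s}H'=0$. Passing to the homotopy colimit gives $\psi$, which is an isomorphism on $\pi_{0,*}$ (equal to $f$) and, trivially, on every other $\pi_{a,b}$; so $\psi$ is a weak equivalence by Proposition~\ref{prop:pi-cellular}, and the zig-zag $H\xleftarrow{\ g\ }\widetilde H\xrightarrow{\ \psi\ }H'$ finishes the argument.

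The step needing real care is the inductive cell attachment: one must verify at each stage both that passing to the cofiber of a wedge of $S^{n,*}$'s is an isomorphism on $\pi_{a,*}$ for $a\le n-1$ and installs the correct $\pi_{n,*}$, and that the partial comparison maps to $H$ and to $H'$ extend over the new cells (the case $n=0$ being special, as there one kills a submodule of $\pi_{0,*}$ rather than the whole group). Both points rest on the two parts of Proposition~\ref{prop:pi-compute} --- the vanishing $\pi_{a,b}S^{0,0}=0$ for $a<0$ and the computation $\pi_{0,*}S^{0,0}=\Z_2[\tau]$ --- together with the fact that a map out of a wedge of spheres is null exactly when it is null on homotopy groups. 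Beyond this bigraded bookkeeping I do not anticipate any difficulty not already present in the classical argument.
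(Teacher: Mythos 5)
Your argument is correct, but it takes a genuinely different route from the paper. The paper presents the module $N=\pi_{0,*}H$ by generators and relations, builds a two--stage Moore spectrum $C$ with $\pi_{0,*}C\cong N$, maps $C$ to both $H$ and $H'$, and then produces a \emph{direct} map $H\to H'$ under $C$ by applying the obstruction result Lemma \ref{lem:pos-neg} to the fiber of $C\to H$; this is where the hypothesis that $\pi_{0,*}$ is concentrated in weights $b\le 0$ enters, since it is needed to make $C$ (and hence $\Sigma^{-1,-1}F$) positive cellular. You instead build a full cell-by-cell approximation $\widetilde H\to H$, killing $\pi_{n,*}$ inductively, and transport the attaching data across the isomorphism $f$ to get a zig-zag $H\leftarrow\widetilde H\to H'$; the only inputs are Proposition \ref{prop:pi-compute} (for the connectivity of wedges of spheres and the identification $\pi_{0,*}$ of a wedge of $S^{0,q}$'s as a free $\Z_2[\tau]$-module), the compatibility of $\pi_{p,q}$ with filtered colimits, and Proposition \ref{prop:pi-cellular}. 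Your version costs more bookkeeping and yields only a zig-zag rather than a map $H\to H'$, but it is self-contained relative to the positive-cellular machinery of Section \ref{sec:ConCov}, and it never visibly uses the weight hypothesis $b\le 0$ --- consistent with (indeed slightly stronger than) the Remark following the proposition about bounded-above $\pi_{0,*}$. Both arguments share the same tacit assumption that $\pi_{p,q}$ of an infinite wedge of spheres is the direct sum in the $H\F_2$-complete category; the paper elides this in its ``straightforward calculation'' of $\pi_{0,*}C$, so it is not a defect particular to your write-up.
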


\begin{proof}
Let $N$ be the $\Z_2[\tau]$-module 
$\pi_{0,*} H = \pi_{0,*} H'$.
Consider the motivic Moore spectrum $C$
constructed by the cofiber sequence
\[
\vee_\alpha S^{0,k_\alpha} \map \vee_\beta S^{0,j_\beta} \map C,
\]
where the second wedge is indexed by a set of generators of $N$ and
the first wedge is indexed by a set of defining relations for $N$.
A straightforward calculation shows that $\pi_{0,*} C$ is isomorphic
to $N$ and $\pi_{a,b} C$ is zero if $a<0$.
Since $N$ is concentrated in degrees $(0,k)$ with $k\leq 0$,
it follows that $\pi_{a,b} C$ is zero if $a-b < 0$.
There are obvious maps $C \map H$ and $C \map H'$ 
that induce isomorphisms on $\pi_{0,*}$.

Let $F$ be the fiber of $C \map H$.  
From the long exact sequence in homotopy groups, it follows
that $\pi_{a,b} F$ is zero if $a<1$ or $a-b<0$.
Thus $\pi_{a,b} (\Sigma^{-1,-1} F)$ is zero
if $a<0$ or $a-b < 0$.

Also, $\pi_{a,b} (\Sigma^{-1,-1} H')$ is zero if $a \geq 0$
and $a-b \geq 0$.  In other words, $\Sigma^{-1,-1} H'$ is positive cellular.
By Lemma \ref{lem:pos-neg}, $[\Sigma^{-1,-1} F, \Sigma^{-1,-1} H']$
and hence $[F, H']$ are both zero.
It follows that the map $[H,H'] \map [C,H']$ is surjective.
In particular, there exists a map $H \map H'$ making the diagram
\[
\xymatrix{C \ar[r]\ar[dr] & H \ar[d] \\ & H'}
\]
commute.  This map $H \map H'$
is an isomorphism on $\pi_{*,*}$, so it is a weak equivalence
by Proposition \ref{prop:pi-cellular}.
\end{proof}

\begin{remark}
Proposition \ref{prop:EM-unique} generalizes in a straightforward way
to cellular motivic spectra $H$ such that $\pi_{0,*} H$ is bounded above
in the sense that $\pi_{0,k} H$ is zero for $k$ greater than some fixed $n$.
\end{remark}

\section{Motivic $K$-theory spectra}

We remind the reader that we are working with $H\F_2$-complete
stable motivic homotopy theory over $\Spec \C$.

\subsection{$KGL$}

Recall that $KGL$ is the algebraic $K$-theory spectrum 
\cite[Part 3, Section 3.2]{MHT}.  This is a ring spectrum; in fact,
it has a strictly associative and commutative multiplication \cite{RSO}.
In \cite[Theorem 6.2]{DDDI2} it is shown that $KGL$ is cellular. 

The group schemes $GL_n$ are equipped with an involution given by
inverse-transpose.  These involutions extend to an involution on $GL$,
and then to $BGL$.  In this way, the motivic spectrum $KGL$ is
a $\Z/2$-equivariant motivic spectrum.

\begin{proposition}
\label{prop:pi-KGL}
The ring $\pi_{*,*} KGL$ is equal to $\Z_2 [\tau, \beta^{\pm 1}]$, where
$\tau$ has bidegree $(0,-1)$ and $\beta$ has bidegree $(2,1)$.
The involution on $\pi_{*,*} KGL$ fixes $\tau$ but takes $\beta$ to $-\beta$.
\end{proposition}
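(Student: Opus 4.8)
The plan is to compute $\pi_{*,*} KGL$ by comparison with classical complex $K$-theory via the topological realization functor, exploiting the fact that $KGL$ is cellular. First I would recall that the topological realization of $KGL$ is the classical spectrum $KU$, whose homotopy ring is $\Z_2[\beta^{\pm 1}]$ after $2$-completion, with $\beta$ in degree $2$. The key structural input is that $\pi_{0,*} KGL$ is a $\Z_2[\tau]$-module and that $\beta \in \pi_{2,1} KGL$ is a unit: indeed, Bott periodicity for $KGL$ provides an equivalence $\Sigma^{2,1} KGL \simeq KGL$, so multiplication by a class $\beta$ of bidegree $(2,1)$ is an isomorphism on all homotopy groups. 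This immediately reduces the computation of $\pi_{*,*} KGL$ to that of $\pi_{0,*} KGL$ and $\pi_{1,*} KGL$, since every bidegree $(p,q)$ is $\beta$-periodic with period $(2,1)$.

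Next I would pin down $\pi_{0,*} KGL$ and show $\pi_{1,*} KGL = 0$. The natural approach is the motivic Atiyah--Hirzebruch/Adams-type spectral sequence, or more directly the observation that $KGL$ is built from motivic cohomology via the slice filtration: the slices of $KGL$ are $\Sigma^{2n,n} H\Z$, so its homotopy groups are assembled from the motivic cohomology of a point, $\pi_{*,*} H\Z = \Z[\tau]$ concentrated appropriately. Combining this with the range statement in Proposition \ref{prop:pi-compute}(2), which controls the sphere and hence constrains where classes can live, forces $\pi_{0,*} KGL = \Z_2[\tau]$ with $\tau$ in bidegree $(0,-1)$ and $\pi_{1,*} KGL = 0$. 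Tensoring up by the Bott class $\beta$ then yields $\pi_{*,*} KGL = \Z_2[\tau, \beta^{\pm 1}]$. The multiplicative structure follows because $\beta$ is a unit in the ring $\pi_{*,*} KGL$ by construction, and $\tau$ generates $\pi_{0,*}$ freely as computed.

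For the involution, I would argue that the involution on $KGL$ given by inverse-transpose on $GL$ acts on $\pi_{0,*} KGL = \Z_2[\tau]$ trivially, because $\tau$ comes from $\pi_{0,*}$ of the sphere (equivalently, from the unit map $S^{0,0} \map KGL$), which is fixed by any ring-spectrum self-map. For $\beta$, the point is that inverse-transpose on $GL_1 = \mathbb{G}_m$ is the inversion automorphism, which on $\pi_2 KU \cong \Z$ (classically, on the Bott class) acts by $-1$: the first Chern class / Bott element of the inverse line bundle is the negative. Transporting this through topological realization, and using that $\pi_{2,1} KGL \map \pi_2 KU$ is an isomorphism carrying $\beta$ to the classical Bott class, shows the involution sends $\beta \mapsto -\beta$.

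The main obstacle I expect is the rigorous identification of $\pi_{0,*} KGL$ and the vanishing of $\pi_{1,*} KGL$: this is where one genuinely needs either the slice spectral sequence for $KGL$ or a motivic Atiyah--Hirzebruch argument together with the connectivity bounds from Proposition \ref{prop:pi-compute}, and one must be careful that $2$-completion does not introduce extra elements (here the fact that $H\F_2$-completion equals $2$-completion, already noted in the excerpt, keeps things under control). The Bott-periodicity reduction and the involution computation are comparatively formal once this input is in hand. Alternatively, one can sidestep the slice machinery entirely by citing the known computation of $\pi_{*,*} KGL$ over $\Spec\C$ from the literature and merely verifying compatibility of the involution with topological realization, which is the route I would actually take to keep the argument short.
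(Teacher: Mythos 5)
Your proposal is correct in substance but takes a genuinely different main route from the paper. The paper's proof of the first part is the short one you mention only as a fallback: it invokes the representability isomorphism $\pi_{p,q}KGL \cong K_{p-2q}(\C)$ from Morel--Voevodsky together with Suslin's computation of the $2$-complete algebraic $K$-theory of $\C$, which hands you the whole bigraded ring at once with no spectral sequence. Your primary route --- Bott periodicity $\Sigma^{2,1}KGL \simeq KGL$ plus the slice filtration with slices $\Sigma^{2n,n}H\Z$ --- also works and is more internal to motivic homotopy theory, but it imports heavier machinery (identification of the slices of $KGL$ and convergence of the slice spectral sequence) than the paper needs. Two small cautions on that route: the integral motivic cohomology of $\Spec\C$ is not $\Z[\tau]$ before completion (it contains the divisible group $\C^{*}$ in bidegree $(1,1)$, which only disappears after $2$-completion), so the slice input must be taken $2$-complete from the start; and Proposition \ref{prop:pi-compute}(2) concerns the sphere, whose connectivity does not directly bound the homotopy of the non-connective spectrum $KGL$, so it is not actually doing work in your argument --- the vanishing of $\pi_{1,*}KGL$ comes entirely from the slices sitting in even topological degrees. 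For the involution, your argument and the paper's are essentially the same idea: the paper says inverse-transpose sends $\mathcal{O}(-1)$ to $\mathcal{O}(1)$, and since the square of the augmentation ideal vanishes on $\p^1$ this gives $\beta\mapsto-\beta$; your version routes the same dual-line-bundle observation through topological realization to $KU$, and your remark that $\tau$ is fixed because it lifts to $\pi_{0,*}$ of the sphere is a clean justification that the paper leaves implicit.
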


\begin{proof}
The first part follows from the calculation of
$2$-complete algebraic $K$-theory of $\C$ \cite{Suslin}, together
with the fact that $\pi_{p,q} KGL$ is isomorphic
to $K_{p-2q} (\C)$ \cite[Section 4, Theorem 3.13]{MV}.

The second part follows from the fact that the involution on $KGL$
takes the line bundle $\mathcal{O}(-1)$ on $\p^1$ to $\mathcal{O}(1)$.
\end{proof}

\subsection{$KO$}

\begin{definition}
Let $KO$ be the homotopy fixed points spectrum $KGL^{h\Z/2}$.
\end{definition}

\begin{remark}
Recall that in the classical case, the real $K$-theory spectrum
$KO$ is weakly equivalent to $KU^{h\Z/2}$.  Our terminology is
chosen to emphasize this analogy.  
\end{remark}

\begin{remark}
We warn the reader that we are
not claiming that $KO$ represents Hermitian $K$-theory,
although we suspect that this is true.
\end{remark}

Our next goal is to compute the homotopy of $KO$.  We will
start with the homotopy of $KGL$ and apply
the homotopy fixed points spectral sequence.

\begin{proposition} 
\label{prop:KGL2} 
The $E_{2}$-page of the homotopy fixed points spectral sequence for
$KO$  is
$\Z_{2}[\tau,h_{1},c^{\pm 1}]/2h_{1}$,
where the degree of $\tau$ is $(0,0,-1)$, 
the degree of $h_{1}$ is $(1,1,1)$,
and the degree of $c$ is $(4,0,2)$.
\end{proposition}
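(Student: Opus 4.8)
The plan is to identify the $E_2$-page as the group cohomology $H^*(\Z/2; \pi_{*,*} KGL)$ given by Theorem \ref{motivicHFP}, and then to compute this group cohomology using the explicit description of $\pi_{*,*} KGL$ and its involution from Proposition \ref{prop:pi-KGL}. First I would recall that $\pi_{*,*} KGL = \Z_2[\tau, \beta^{\pm 1}]$ with the involution fixing $\tau$ and sending $\beta \mapsto -\beta$. The key observation is that the $\Z/2$-action on this ring is $\Z_2[\tau]$-linear, and $\Z_2[\tau, \beta^{\pm 1}]$ decomposes as a direct sum over powers of $\beta$ of rank-one free $\Z_2[\tau]$-modules, on which the generator of $\Z/2$ acts by $(-1)^k$ on the $\beta^k$-summand. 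So the whole computation reduces to the group cohomology of $\Z/2$ acting on $\Z_2$ trivially (for even $k$) and by the sign (for odd $k$).

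Next I would assemble the answer from the standard facts $H^*(\Z/2; \Z_2) = \Z_2[h_1]/(2h_1)$ in nonnegative cohomological degree with $\Z_2$ in degree $0$, and $H^*(\Z/2; \widetilde{\Z_2}) $ (the sign representation) which is $0$ in degree $0$, $0$ in even positive degrees, and $\Z/2$ in odd degrees; equivalently, the sign-twisted cohomology is $h_1 \cdot \Z_2[h_1]/(2h_1, 2)$ shifted appropriately so that it is generated over $\F_2[h_1]$ by a class in cohomological degree $1$. Tracking the motivic weight, the class $\beta$ lives in internal bidegree $(2,1)$; the even power $\beta^2$ survives to an honest invariant class which I would name $c$, sitting in filtration $0$ and internal bidegree $(4,2)$, so total degree $(4,0,2)$; and $\tau$ contributes a polynomial generator in degree $(0,0,-1)$. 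The odd powers $\beta^{2j+1}$ contribute, via the sign representation, a class detected in filtration $1$: namely $h_1$ itself, coming from $\beta$ together with the group-cohomology generator, landing in total degree $(1,1,1)$ (filtration $1$, topological degree $1 = 2 - 1$, weight $1$). The relation $2 h_1 = 0$ is exactly the statement that $H^{\geq 1}(\Z/2;-)$ is killed by $2$. Putting the summands together multiplicatively — using that the multiplication on the $E_2$-page is induced by the ring structure on $\pi_{*,*} KGL$ and the cup product in group cohomology — gives precisely $\Z_2[\tau, h_1, c^{\pm 1}]/(2 h_1)$, with $c$ invertible because $\beta$ was invertible.

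The main obstacle, and the step requiring the most care, is the bookkeeping of the three gradings at once: the group-cohomological (filtration) degree $p$, the topological degree, and the motivic weight, together with checking that no further classes or relations appear. Concretely, one must verify that in each fixed internal bidegree the group-cohomology contributions from the various $\beta^k$-summands do not collide or extend in unexpected ways, and that the product of two odd classes (which lands in an even power of $\beta$ and in filtration $2$) is correctly expressed as $h_1^2 = $ the appropriate multiple of $c$ and $\tau$; this is the identity that pins down how $c^{\pm 1}$ and $h_1$ interact and confirms that $h_1^2$ is not independently a new generator. I expect this to follow by a direct comparison of Poincaré series (or, more cheaply, by invoking compatibility with topological realization, under which $KGL$ realizes to $KU$ with its complex-conjugation action and the $E_2$-page realizes to the classical one $\Z_2[h_1, c^{\pm 1}]/(2h_1)$, so the only new feature is the polynomial variable $\tau$ in weight $-1$ coming from $\pi_{0,*}$). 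Once the gradings are matched against the classical computation and the extra $\tau$ is inserted, the proposition follows.
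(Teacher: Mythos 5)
Your proposal is correct and follows essentially the same route as the paper, whose proof simply observes that the $E_2$-page is a straightforward group-cohomology calculation using $H^*(\Z/2;\Z)$ and $H^*(\Z/2;\Z(-1))$ applied to the $\beta^k$-eigensummands of $\pi_{*,*}KGL$. One small slip in your write-up: $H^*(\Z/2;\Z_2)$ with \emph{trivial} action is $\Z_2[u]/(2u)$ with $u$ in cohomological degree $2$ (and zero in odd degrees), not a polynomial algebra on a degree-one class; the degree-one generator $h_1$ arises only because it is carried by the sign-twisted summand $\beta\cdot\Z_2[\tau]$, which your subsequent bookkeeping does handle correctly.
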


\begin{proof}
This is a straightforward calculation, using the classical computations
of $H^*(\Z/2; \Z)$ and $H^*(\Z/2;\Z(-1))$,
where $\Z(-1)$ is the $\Z/2$-module whose involution is
multiplication by $-1$.
\end{proof}

Figure \ref{fig:E^{2}} is a pictorial representation of the
computation in Proposition \ref{prop:KGL2}.
Here, and in the figures following it, $E_2^{n,p,u}$ is located at
coordinates $(n,p)$, and the weight is not shown.
Copies of $\Z_{2}[\tau]$ are represented by open boxes.
Copies of $\F_2[\tau]$ are represented by solid circles.
Copies of $\F_2[\tau]/\tau$ are represented by open circles.
Lines of slope $1$ represent multiplications by $h_1$.

\begin{center}
\begin{figure}[htbp!]
\psset{unit=0.5cm}
\begin{pspicture}(-10,-2)(10,10)
\scriptsize

\psline(-10,-1)(10,-1)
\psline(-10,-1)(-10,9)

\rput(0,-1.6){$0$}
\rput(-4,-1.6){$-4$}
\rput(-8,-1.6){$-8$}
\rput(4,-1.6){$4$}
\rput(8,-1.6){$8$}

\psframe(-0.25,-0.25)(0.25,0.25)
\psline(0,0)(1,1)
\pscircle*(1,1){0.2}
\psline(1,1)(2,2)
\pscircle*(2,2){0.2}
\psline(2,2)(3,3)
\pscircle*(3,3){0.2}
\psline(3,3)(4,4)
\pscircle*(4,4){0.2}
\psline(4,4)(5,5)
\pscircle*(5,5){0.2}
\psline(5,5)(6,6)
\pscircle*(6,6){0.2}
\psline(6,6)(7,7)
\pscircle*(7,7){0.2}
\psline(7,7)(8,8)
\pscircle*(8,8){0.2}
\psline(8,8)(9,9)

%%%%%%%%%%%%%%%%%%%%%%%%%%%%%%%%%%%%%%%%%%%%%

\psframe(-4.25,-0.25)(-3.75,0.25)
\psline(-4,0)(-3,1)
\pscircle*(-3,1){0.2}
\psline(-3,1)(-2,2)
\pscircle*(-2,2){0.2}
\psline(-2,2)(-1,3)
\pscircle*(-1,3){0.2}
\psline(-1,3)(0,4)
\pscircle*(0,4){0.2}
\psline(0,4)(1,5)
\pscircle*(1,5){0.2}
\psline(1,5)(2,6)
\pscircle*(2,6){0.2}
\psline(2,6)(3,7)
\pscircle*(3,7){0.2}
\psline(3,7)(4,8)
\pscircle*(4,8){0.2}
\psline(4,8)(5,9)

%%%%%%%%%%%%%%%%%%%%%%%%%%%%%%%%%%%%%%%%%%%%%

\psframe(-8.25,-0.25)(-7.75,0.25)
\psline(-8,0)(-7,1)
\pscircle*(-7,1){0.2}
\psline(-7,1)(-6,2)
\pscircle*(-6,2){0.2}
\psline(-6,2)(-5,3)
\pscircle*(-5,3){0.2}
\psline(-5,3)(-4,4)
\pscircle*(-4,4){0.2}
\psline(-4,4)(-3,5)
\pscircle*(-3,5){0.2}
\psline(-3,5)(-2,6)
\pscircle*(-2,6){0.2}
\psline(-2,6)(-1,7)
\pscircle*(-1,7){0.2}
\psline(-1,7)(0,8)
\pscircle*(0,8){0.2}
\psline(0,8)(1,9)

%%%%%%%%%%%%%%%%%%%%%%%%%%%%%%%%%%%%%%%%%%%%%

\psline(-9,3)(-8,4)
\pscircle*(-8,4){0.2}
\psline(-8,4)(-7,5)
\pscircle*(-7,5){0.2}
\psline(-7,5)(-6,6)
\pscircle*(-6,6){0.2}
\psline(-6,6)(-5,7)
\pscircle*(-5,7){0.2}
\psline(-5,7)(-4,8)
\pscircle*(-4,8){0.2}
\psline(-4,8)(-3,9)

%%%%%%%%%%%%%%%%%%%%%%%%%%%%%%%%%%%%%%%%%%%%%

\psline(-9,7)(-8,8)
\pscircle*(-8,8){0.2}
\psline(-8,8)(-7,9)

%%%%%%%%%%%%%%%%%%%%%%%%%%%%%%%%%%%%%%%%%%%%%

\psframe(3.75,-0.25)(4.25,0.25)
\psline(4,0)(5,1)
\pscircle*(5,1){0.2}
\psline(5,1)(6,2)
\pscircle*(6,2){0.2}
\psline(6,2)(7,3)
\pscircle*(7,3){0.2}
\psline(7,3)(8,4)
\pscircle*(8,4){0.2}
\psline(8,4)(9,5)

%%%%%%%%%%%%%%%%%%%%%%%%%%%%%%%%%%%%%%%%%%%%%

\psframe(7.75,-0.25)(8.25,0.25)
\psline(8,0)(9,1)

%%%%%%%%%%%%%%%%%%%%%%%%%%%%%%%%%%%%%%%%%%%%%
\psline{->}(4,0)(3.1,2.7)
\psline{->}(5,1)(4.1,3.7)
\psline{->}(6,2)(5.1,4.7)
\psline{->}(7,3)(6.1,5.7)
\psline{->}(8,4)(7.1,6.7)
\psline{->}(9,5)(8.1,7.7)

\psline{->}(-4,0)(-4.9,2.7)
\psline{->}(-3,1)(-3.9,3.7)
\psline{->}(-2,2)(-2.9,4.7)
\psline{->}(-1,3)(-1.9,5.7)
\psline{->}(0,4)(-0.9,6.7)
\psline{->}(1,5)(0.1,7.7)
\psline{->}(2,6)(1.1,8.7)
\psline(3,7)(2.33,9)
\psline(4,8)(3.67,9)

\psline{->}(-8,4)(-8.9,6.7)
\psline{->}(-7,5)(-7.9,7.7)
\psline{->}(-6,6)(-6.9,8.7)
\psline(-5,7)(-5.67,9)
\psline(-4,8)(-4.33,9)

\end{pspicture}

\caption{The $E_{2}$-page of the homotopy fixed points spectral sequence
for $KO$}\label{fig:E^{2}}
\end{figure}
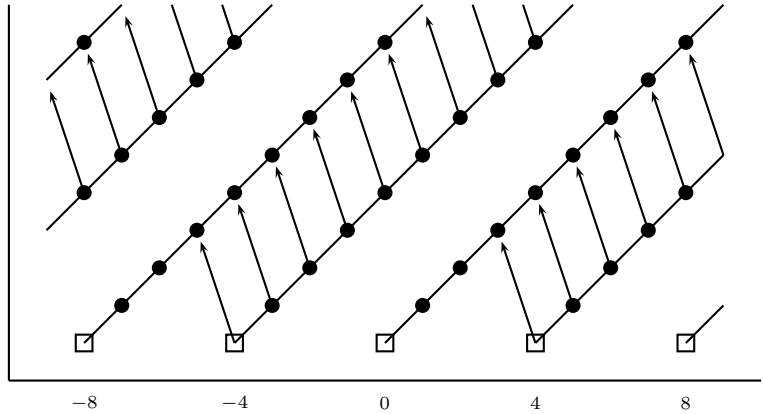
\end{center}

For dimension reasons, the $d_2$-differential is zero.

\begin{lemma}
\label{lem:d3}
In the homotopy fixed points spectral sequence for $KO$,
$d_3(\tau) = 0$, $d_3(h_1) = 0$, and $d_3(c) = \tau h_1^3$.
\end{lemma}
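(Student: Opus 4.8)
The plan is to exploit the map to the classical homotopy fixed points spectral sequence for $KU^{h\Z/2} = KO$, together with the known structure of the motivic $E_2$-page. First I would observe that $d_3(\tau)=0$ for degree reasons: $\tau$ has degree $(0,0,-1)$, so $d_3(\tau)$ would live in degree $(-1,3,-1)$, but the $E_2$-page computed in Proposition~\ref{prop:KGL2} is concentrated in filtrations $p\geq 0$ with $n+p\geq 0$, and more specifically there is no class in the relevant spot; since $\tau$ is a permanent cycle anyway (it comes from $\pi_{0,-1}$ of the sphere, which survives), this is immediate. For $h_1$: topological realization sends $\tau\mapsto 1$ and the motivic $E_2$-page to the classical one, and $h_1$ maps to the class $\eta$ (in filtration $1$) of the classical HFPSS for $KO$; there, $\eta$ is a permanent cycle, so $d_3(h_1)$ realizes to $d_3(\eta)=0$. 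Since realization is injective on the relevant group of the $E_2$-page (multiplication by $\tau$ being the kernel, and the target of $d_3(h_1)$ contains no $\tau$-divisible elements that could vanish under realization), we get $d_3(h_1)=0$.

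The substantive point is $d_3(c)=\tau h_1^3$. Here I would again compare with the classical situation: classically $d_3(c)=\eta^3$ in the HFPSS for $KO$ (this is the differential responsible for the order-$8$ phenomenon and the pattern of $KO$). Under topological realization, $c\mapsto c_{\mathrm{cl}}$ (the classical Bott-type class in filtration $0$, degree $4$), $h_1\mapsto\eta$, and $\tau\mapsto 1$, so $d_3(c)$ must realize to $\eta^3$. The target group $E_3^{n,p,u}$ in the motivic spectral sequence that contains $\tau h_1^3$ is a copy of $\F_2[\tau]$ (the $h_1^3$-line), and realization sends $\tau^k h_1^3\mapsto \eta^3$ for every $k\geq 0$; so a priori $d_3(c)$ could be $\tau^k h_1^3$ for any $k\geq 0$, or could pick up extra $\tau$-torsion-free terms, but multiplicativity and weight bookkeeping pin it down. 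Specifically, $c$ has weight $2$ and $h_1^3$ has weight $3$, so $d_3(c)$, if it is $\tau^k h_1^3$, forces $2 = -k + 3$, i.e.\ $k=1$; and the Leibniz rule over the permanent cycles $\tau, h_1$ shows there is no room for additional summands (the only classes in that tridegree are $\F_2$-multiples of $\tau h_1^3$). This weight computation, combined with the nonvanishing guaranteed by realization, gives $d_3(c)=\tau h_1^3$ exactly.

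I expect the main obstacle to be making the realization comparison precise enough to conclude \emph{nonvanishing} rather than merely $d_3(c)\in\{0,\tau h_1^3\}$: one must know that realization is nonzero on the class $c$ in $E_3$ (so that $d_3(c)\ne 0$ forces $d_3(c)=\tau h_1^3$ by the weight count, and $d_3(c)=0$ would contradict $d_3(c_{\mathrm{cl}})=\eta^3\ne 0$). This follows from the fact that $c$ reduces to $c_{\mathrm{cl}}$ under realization and $c_{\mathrm{cl}}$ survives to $E_3$ classically with a nonzero $d_3$; since the motivic-to-classical map of spectral sequences is compatible with differentials, $d_3(c)$ cannot be zero. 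Once nonvanishing is in hand, the weight argument is routine bookkeeping, and the multiplicative structure ($d_3$ is a derivation with respect to the ring structure on the spectral sequence, $\tau$ and $h_1$ being permanent cycles) immediately propagates $d_3(c)=\tau h_1^3$ to all of $E_3$.
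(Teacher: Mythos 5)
Your argument is correct and is essentially the paper's: the published proof is exactly ``degree considerations and topological realization,'' resting on the classical fact that $d_3(c_{\mathrm{cl}})=\eta^3$ in the homotopy fixed point spectral sequence for $KU^{h\Z/2}$, and your weight bookkeeping (weight of $c$ is $2$, weight of $h_1^3$ is $3$, so exactly one factor of $\tau$ is forced) is the intended degree consideration that pins down $d_3(c)=\tau h_1^3$ once realization guarantees nonvanishing. One small slip: the target tridegree $(-1,3,-1)$ of $d_3(\tau)$ is \emph{not} empty --- it contains $\tau^2 h_1^3 c^{-1}$ --- but the alternative argument you also give, that $\tau$ lies in the image of the edge homomorphism from $\pi_{0,-1}$ of the sphere and is therefore a permanent cycle, correctly disposes of that case.
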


\begin{proof}
This follows immediately by degree considerations and topological
realization.
Classically, $d_3$ takes $h_1$ to $0$ and takes $c$ to $h_1^3$.
\end{proof}

\begin{proposition}
\label{prop:E^{4}}
The $E_\infty$-page of the homotopy fixed points spectral sequence for
$KO$ is
\[
\frac{\Z_2[\tau,h_1,a,b^{\pm 1}]}{2h_1, \tau h_1^3, a^2=4b, h_1 a},
\]
where the degree of $\tau$ is $(0,0,-1)$, 
the degree of $h_1$ is $(1,1,1)$, 
the degree of $a$ is $(4,0,2)$,
and the degree of $b$ is $(8,0,4)$.
\end{proposition}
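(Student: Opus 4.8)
The plan is to obtain the $E_\infty$-page by first computing $E_4$ as the homology of $(E_3,d_3)$ and then checking that no higher differential can be nonzero. Since $d_2$ vanishes, $E_3$ is the $E_2$-page $\Z_2[\tau,h_1,c^{\pm 1}]/2h_1$ of Proposition \ref{prop:KGL2}. Because $KGL^{h\Z/2}$ is a ring spectrum, the homotopy fixed point spectral sequence is multiplicative, so $d_3$ is a derivation; together with Lemma \ref{lem:d3} this determines it on the monomial $\Z_2$-basis by $d_3(\tau^i h_1^j c^k) = k\,\tau^{i+1}h_1^{j+3}c^{k-1}$ (for $k$ negative one uses $d_3(c^{-1}) = c^{-2}\tau h_1^3$).

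To compute the homology, write $R = \Z_2[\tau,h_1]/2h_1$. As a complex under $d_3$, the $E_3$-page splits as a direct sum over $m\in\Z$ of two-term subcomplexes $R\,c^{2m-1}\longrightarrow R\,c^{2m-2}$, where the map is multiplication by the unit multiple $(2m-1)\tau h_1^3$; here one uses that $d_3$ lowers the exponent of $c$ by one and that the differential out of an even power of $c$ vanishes because $2h_1=0$. So the homology is controlled by the kernel and cokernel of multiplication by $\tau h_1^3$ on $R$, and a short computation, separating the filtration-zero summand $\Z_2[\tau]\subseteq R$ from the positive-filtration summand $h_1\F_2[\tau,h_1]$, gives kernel $2\Z_2[\tau]$ and cokernel $\Z_2[\tau,h_1]/(2h_1,\tau h_1^3)$. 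Setting $b=c^2$ and $a=2c$, so that $a^2=4b$ and $h_1 a = 2h_1 c = 0$, and noting that $\tau h_1^3 = d_3(c)$ is killed, this reindexing identifies
\[
E_4 \;\cong\; \frac{\Z_2[\tau,h_1,a,b^{\pm 1}]}{2h_1,\ \tau h_1^3,\ a^2-4b,\ h_1 a}
\]
as a ring.

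It then remains to prove $E_4=E_\infty$, i.e.\ that $d_r=0$ for all $r\geq 4$. By the Leibniz rule it suffices to check that the multiplicative generators $\tau$, $h_1$, $a$, $b^{\pm 1}$ are permanent cycles. The only classes of $E_4$ of filtration $\geq 4$ are the $h_1^j b^m$ with $j\geq 4$, and for each of the four generators the tridegree equation asserting that some $d_r$ hits such a class has no integer solution: in every case the weight and the topological degree are incompatible modulo $4$. (For $\tau$ one may alternatively note that it is the image of the corresponding element of $\pi_{0,-1}$ of the sphere spectrum.) Hence all higher differentials vanish and $E_4=E_\infty$.

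The step needing the most care is the homology computation, which is the only genuinely non-formal point: one must keep track of the distinction between the free $\Z_2$-summands of $E_3$ (the columns with no factor of $h_1$) and the $\F_2$-summands, since a $d_3$ out of a $\Z_2$-column lands in an $\F_2$ and is therefore either zero or surjective with kernel $2\Z_2$. This is exactly where the generator $a=2c$ and the relation $a^2=4b$ come from. As a consistency check, topological realization sends this $E_\infty$-page (with $\tau\mapsto 1$) to $\Z_2[\eta,\alpha,\beta^{\pm 1}]/(2\eta,\eta^3,\alpha^2-4\beta,\eta\alpha)$, the classical homotopy ring of $2$-complete $KO$.
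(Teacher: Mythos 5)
Your argument is correct and is essentially the paper's proof written out in full detail: the paper likewise computes $E_4$ from $d_3(c)=\tau h_1^3$ via the Leibniz rule, identifies $a=2c$ and $b=c^2$, and dismisses higher differentials for dimension reasons. The only quibble is that for the generator $\tau$ the ``incompatible modulo $4$'' criterion is actually satisfied (both sides give $0$), so one must finish that case by noting the full tridegree equation forces $m=0$ and hence $j=-1$, which is impossible since $j\geq 4$ --- or use your parenthetical remark that $\tau$ survives because it is in the image of the unit.
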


\begin{proof}
The $E_4$-page can be computed from the description of $d_3$
given in Lemma \ref{lem:d3}.  The element $a$ corresponds to $2c$,
while $b$ corresponds to $c^2$.

Then observe that all higher
differentials vanish for dimension reasons.
\end{proof}

Figure \ref{fig:E^{4}} is a pictorial representation of the
$E_\infty$-page of the spectral sequence.  The notation is 
the same as in Figure \ref{fig:E^{2}}.

\begin{center}
\begin{figure}[htbp!]
\psset{unit=0.5cm}
\begin{pspicture}(-10,-2)(10,10)
\scriptsize

\psline(-10,-1)(10,-1)
\psline(-10,-1)(-10,9)

\rput(0,-1.6){$0$}
\rput(-4,-1.6){$-4$}
\rput(-8,-1.6){$-8$}
\rput(4,-1.6){$4$}
\rput(8,-1.6){$8$}

\psframe(-0.25,-0.25)(0.25,0.25)
\psline(0,0)(1,1)
\pscircle*(1,1){0.2}
\psline(1,1)(2,2)
\pscircle*(2,2){0.2}
\psline(2,2)(3,3)
\pscircle(3,3){0.2}
\psline(3,3)(4,4)
\pscircle(4,4){0.2}
\psline(4,4)(5,5)
\pscircle(5,5){0.2}
\psline(5,5)(6,6)
\pscircle(6,6){0.2}
\psline(6,6)(7,7)
\pscircle(7,7){0.2}
\psline(7,7)(8,8)
\pscircle(8,8){0.2}
\psline(8,8)(9,9)

%%%%%%%%%%%%%%%%%%%%%%%%%%%%%%%%%%%%%%%%%%%%%

\psframe(-4.25,-0.25)(-3.75,0.25)

%%%%%%%%%%%%%%%%%%%%%%%%%%%%%%%%%%%%%%%%%%%%%

\psframe(-8.25,-0.25)(-7.75,0.25)
\psline(-8,0)(-7,1)
\pscircle*(-7,1){0.2}
\psline(-7,1)(-6,2)
\pscircle*(-6,2){0.2}
\psline(-6,2)(-5,3)
\pscircle(-5,3){0.2}
\psline(-5,3)(-4,4)
\pscircle(-4,4){0.2}
\psline(-4,4)(-3,5)
\pscircle(-3,5){0.2}
\psline(-3,5)(-2,6)
\pscircle(-2,6){0.2}
\psline(-2,6)(-1,7)
\pscircle(-1,7){0.2}
\psline(-1,7)(0,8)
\pscircle(0,8){0.2}
\psline(0,8)(1,9)

%%%%%%%%%%%%%%%%%%%%%%%%%%%%%%%%%%%%%%%%%%%%%

\psline(-9,7)(-8,8)
\pscircle(-8,8){0.2}
\psline(-8,8)(-7,9)

%%%%%%%%%%%%%%%%%%%%%%%%%%%%%%%%%%%%%%%%%%%%%

\psframe(3.75,-0.25)(4.25,0.25)

%%%%%%%%%%%%%%%%%%%%%%%%%%%%%%%%%%%%%%%%%%%%%

\psframe(7.75,-0.25)(8.25,0.25)
\psline(8,0)(9,1)

\end{pspicture}

\caption{The $E_\infty$-page of the homotopy fixed points spectral sequence
for $KO$}\label{fig:E^{4}}
\end{figure}
\end{center}

\begin{theorem}
\label{theo:KGL^{hC_{2}}} 
The ring $\pi_{*,*}(KO)$ is 
\[
\frac{\Z_{2}[\tau,h_{1},a,b^{\pm 1}]}{2h_1, \tau h_{1}^{3}, a^{2}=4b, h_{1}a},
\]
where the degree of $\tau$ is $(0,-1)$, 
the degree of $h_1$ is $(1,1)$, the degree of
$a$ is $(4,2)$, 
and the degree of $b$ is $(8,4)$. 
\end{theorem}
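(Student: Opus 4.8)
The plan is to recover $\pi_{*,*}(KO)$ from the $E_\infty$-page computed in Proposition \ref{prop:E^{4}} by resolving the multiplicative extension problems and checking that no hidden extensions occur. The homotopy fixed points spectral sequence is conditionally convergent by Theorem \ref{motivicHFP}, and since it collapses at $E_4$ with a horizontal vanishing line (all classes lie in filtration $p \geq 0$ and, after the $d_3$ differential kills the $\tau h_1^3$-towers, in bounded filtration in each stem), conditional convergence upgrades to strong convergence. So $\pi_{n,u}(KO)$ has a finite filtration whose associated graded is the $(n,u)$-part of $E_\infty^{*,*,*}$.

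First I would observe that the bidegrees of $\tau$, $h_1$, $a$, $b$ in $\pi_{*,*}(KO)$ are forced: collapsing the filtration degree in $(n,p,u)$ to $(n,u)$ gives exactly $(0,-1)$, $(1,1)$, $(4,2)$, $(8,4)$ as claimed. Next I would lift each $E_\infty$-generator to an actual element of $\pi_{*,*}(KO)$ — the names $\tau, h_1, a, b$ — and check the relations. The relations $2h_1 = 0$ and $\tau h_1^3 = 0$ hold already on $E_\infty$ and, being in the lowest available filtration in their stems, cannot be corrected by anything of higher filtration; similarly $h_1 a = 0$ because the target stem $(5, u)$ has $E_\infty$ concentrated in a single filtration that is already hit, so there is nothing for it to equal. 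The relation $a^2 = 4b$ requires a little more care: on $E_\infty$ one has $a^2 = 0$ (since $a$ corresponds to $2c$ and $a^2$ to $4c^2 = 4b$, which is $0$ mod the filtration), so I must check that the product $a^2$ in $\pi_{16,8}(KO)$ is precisely $4b$ and not $0$ or $2b$. This is where topological realization enters: under the realization functor $KO \mapsto KO_{\mathrm{cl}}$, our classes map to the standard generators, and the classical relation $\alpha^2 = 4\beta$ in $\pi_* KO$ forces $a^2 = 4b$ motivically, since realization is a ring map and is injective on the relevant homotopy group (which is $\Z_2$ generated by $b$, with $4b$ the nonzero element of filtration $2$).

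The main obstacle is the $a^2 = 4b$ extension — more precisely, confirming that there is no hidden $\tau$-multiple ambiguity and that realization pins down the coefficient exactly. The supporting points are: (i) $b$ is invertible, so once the relations among $\tau, h_1, a$ and the single relation expressing $a^2$ are known, periodicity propagates everything; (ii) the $\Z_2[\tau]$-module structure on each $\pi_{n,*}(KO)$ is read off from the $E_\infty$ picture plus the fact (Proposition \ref{prop:pi-compute}) that $\pi_{0,*}$ of the sphere is $\Z_2[\tau]$, so there are no exotic $\tau$-extensions; (iii) everything is compatible with the classical computation under realization, which both guides and verifies the answer. I would finish by remarking that the displayed ring is visibly the $E_\infty$-page with the extensions filled in, so the two are abstractly isomorphic as bigraded rings, completing the proof.
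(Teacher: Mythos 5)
Your overall strategy is the paper's: the answer is read off from the $E_\infty$-page of Proposition \ref{prop:E^{4}}, and the only thing to check is that there are no hidden extensions, which follows because for each fixed stem and weight $(n,u)$ at most one filtration $p$ contributes (the weight bookkeeping you sketch for $2h_1$, $\tau h_1^3$, and $h_1 a$ is exactly the ``dimension reasons'' the paper invokes). So the structure of your argument is sound.

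However, your treatment of $a^2 = 4b$ contains an error that the paper avoids entirely. You assert that $a^2 = 0$ on the $E_\infty$-page because $4c^2$ is ``$0$ mod the filtration.'' This is false: $a$ and $b$ both live in filtration $p=0$, where the entries are copies of $\Z_2[\tau]$ (the open boxes in Figure \ref{fig:E^{4}}), i.e.\ torsion-free. Since $a$ is the class of $2c$ and $b$ the class of $c^2$ in the subquotient $E_4 = E_\infty$ of $E_2$, the identity $a^2 = 4c^2 = 4b$ holds on the nose in $E_\infty^{8,0,4} \cong \Z_2$, and $4b \neq 0$ there. This is precisely why Proposition \ref{prop:E^{4}} lists $a^2 = 4b$ as a relation on the $E_\infty$-page itself; there is no multiplicative extension to resolve, and the proof of the theorem really is the one line ``no extensions for dimension reasons.'' Your appeal to topological realization is a legitimate cross-check, but as written it is doing load-bearing work to repair the false claim $a^2=0$, and it silently assumes that realization is injective on $\pi_{8,4}(KO)$, which would itself need an argument (e.g.\ a comparison of homotopy fixed point spectral sequences). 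Also note a bookkeeping slip: $a^2$ lives in $\pi_{8,4}(KO)$, not $\pi_{16,8}(KO)$.
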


\begin{proof} 
For dimension reasons, there are no extensions to resolve in 
the $E_{\infty}$-page described in Proposition \ref{prop:E^{4}}.
\end{proof}

\subsection{$ko$}
\label{sec:C(KGL^{hC_2})}

\begin{definition}
\label{defn:ko}
Let $ko$ be the connective cover of $KO$
in the sense of Definition \ref{defn:ConCov}.
\end{definition}

\begin{theorem}
\label{thm:main2} 
The ring $\pi_{*,*}(ko)$ is 
\[
\frac{\Z_{2}[\tau, h_{1}, a, b]}{2h_{1}, \tau h_{1}^{3}, h_{1}a, a^{2}=4b},
\]
where the degree of $\tau$ is $(0,-1)$,
the degree of $h_1$ is $(1,1)$,
the degree of $a$ is $(4,2)$,
and the degree of $b$ is $(8,4)$.
\end{theorem}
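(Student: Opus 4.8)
The plan is to use the defining property of the connective cover from Corollary \ref{cor:ConCov}, which gives a map $ko \map KO$ that is an isomorphism on $\pi_{p,q}$ whenever $p \geq 0$ and $p - q \geq 0$, and for which $\pi_{p,q}(ko)$ vanishes whenever $p < 0$ or $p - q < 0$. So the computation of $\pi_{*,*}(ko)$ reduces entirely to reading off which part of the ring $\pi_{*,*}(KO)$, as computed in Theorem \ref{theo:KGL^{hC_{2}}}, lies in the range $p \geq 0$, $p - q \geq 0$.

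First I would take the presentation
\[
\pi_{*,*}(KO) = \frac{\Z_{2}[\tau,h_{1},a,b^{\pm 1}]}{2h_1, \tau h_{1}^{3}, a^{2}=4b, h_{1}a}
\]
and record the bidegrees: $\tau$ has bidegree $(0,-1)$, $h_1$ has $(1,1)$, $a$ has $(4,2)$, and $b$ has $(8,4)$. A monomial $\tau^i h_1^j a^\epsilon b^k$ (with $\epsilon \in \{0,1\}$, and $k \in \Z$ since $b$ is inverted) has topological degree $p = j + 4\epsilon + 8k$ and weight $q = -i + j + 2\epsilon + 4k$, so $p - q = i + 2\epsilon + 4k$. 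The condition $p - q \geq 0$ together with $i \geq 0$ and $\epsilon \geq 0$ forces $k \geq 0$ as soon as we are not in a degenerate situation; more carefully, $p - q \geq 0$ means $i + 2\epsilon + 4k \geq 0$, and since the connective cover also kills everything with $p < 0$, one checks that the surviving monomials are exactly those with $k \geq 0$ — negative powers of $b$ always push either $p$ or $p-q$ negative. The subring of $\pi_{*,*}(KO)$ spanned by monomials with $k \geq 0$ is precisely $\Z_2[\tau, h_1, a, b]$ modulo the same relations $2h_1$, $\tau h_1^3$, $h_1 a$, $a^2 = 4b$, i.e. the claimed ring.

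The one point requiring genuine (though still short) verification is that this subring is not just contained in the connective range but is \emph{all} of it — that no monomial with $k \geq 0$ accidentally has $p < 0$ or $p - q < 0$, and conversely that every class in the connective range is hit. Since $p = j + 4\epsilon + 8k$ with all of $j, \epsilon, k \geq 0$ we automatically have $p \geq 0$, and $p - q = i + 2\epsilon + 4k \geq 0$ automatically as well, so the whole subring $\Z_2[\tau,h_1,a,b]/(\ldots)$ sits in the connective range and maps isomorphically onto its image in $\pi_{*,*}(KO)$. Conversely, any monomial with $k < 0$: if $k \leq -1$ then to keep $p = j + 4\epsilon + 8k \geq 0$ one needs $j + 4\epsilon \geq 8$, but then $q = -i + j + 2\epsilon + 4k$ and $p - q = i + 2\epsilon + 4k \leq i + 2\epsilon - 4$, which is forced negative once one uses the relations $h_1 a = 0$ and $\tau h_1^3 = 0$ to bound $i$ and $\epsilon$ in terms of the allowed monomial basis — a finite check. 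This is the main (and only) obstacle, and it is purely bookkeeping; I expect no conceptual difficulty. The ring structure then transports directly because $ko \map KO$ is a ring map (the connective cover of a ring spectrum is a ring spectrum by Proposition \ref{prop:ConCov-ring}) inducing an isomorphism onto the connective subring.
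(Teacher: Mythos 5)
Your proposal is correct and is essentially the paper's own proof: the paper simply cites Theorem \ref{theo:KGL^{hC_{2}}} together with the defining properties of the connective cover from Corollary \ref{cor:ConCov}, leaving implicit exactly the bidegree bookkeeping (that the monomials with $k \geq 0$ are precisely those with $p \geq 0$ and $p - q \geq 0$, using the relations $h_1 a = 0$ and $\tau h_1^3 = 0$ to rule out negative powers of $b$) that you carry out, along with the appeal to Proposition \ref{prop:ConCov-ring} for the ring structure.
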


\begin{proof} 
This follows from Theorems \ref{theo:KGL^{hC_{2}}} and 
Corollary \ref{cor:ConCov}.
\end{proof}

\subsection{$kgl$ and $kgl^{h\Z/2}$}

\begin{definition}
Let $kgl$ be the connective cover of $KGL$, in the sense of 
Definition \ref{defn:ConCov}.
\end{definition}

It follows from Proposition \ref{prop:ConCov-ring} that 
$kgl$ is a ring spectrum, and the homotopy of $kgl$
is easily described from Proposition \ref{prop:pi-KGL}
and Corollary \ref{cor:ConCov}.

\begin{proposition} 
\label{prop:pi-kgl}
The ring $\pi_{*,*}(kgl)$ is isomorphic to $\Z_{2}[\tau,\beta]$, where the degree of $\tau$ is $(0,-1)$ and the degree of $\beta$ is $(2,1)$.
\end{proposition}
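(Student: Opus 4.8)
The plan is to compute $\pi_{*,*}(kgl)$ directly from Proposition \ref{prop:pi-KGL} together with the defining property of the connective cover in Corollary \ref{cor:ConCov}. First I would recall that $KGL$ is cellular by \cite[Theorem 6.2]{DDDI2}, so that $kgl = C(KGL)$ is defined, and that by Proposition \ref{prop:ConCov-ring} it inherits a ring structure for which the map $kgl \map KGL$ is a map of ring spectra. The key input is that $\pi_{*,*} KGL = \Z_2[\tau, \beta^{\pm 1}]$ with $\tau$ in bidegree $(0,-1)$ and $\beta$ in bidegree $(2,1)$.

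Next I would identify precisely which bidegrees $(a,b)$ survive in the connective cover. By Corollary \ref{cor:ConCov}, $\pi_{a,b}(kgl) \map \pi_{a,b}(KGL)$ is an isomorphism when $a \geq 0$ and $a - b \geq 0$, and $\pi_{a,b}(kgl) = 0$ otherwise. So I just need to list the monomials $\tau^i \beta^j$ in $\Z_2[\tau, \beta^{\pm 1}]$ lying in the allowed range. The bidegree of $\tau^i \beta^j$ is $(2j, j - i)$, so the condition $a \geq 0$ forces $j \geq 0$, and then $a - b \geq 0$ reads $2j - (j-i) = j + i \geq 0$, which for $j \geq 0$ is automatic precisely when $i \geq 0$. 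Hence the surviving monomials are exactly those $\tau^i \beta^j$ with $i, j \geq 0$, which is to say $\pi_{*,*}(kgl) \cong \Z_2[\tau, \beta]$ as a graded group.

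Finally I would promote this to a ring isomorphism. Since $kgl \map KGL$ is a ring map that is injective on all of $\pi_{*,*}$ in the relevant range (it is the inclusion of $\Z_2[\tau,\beta]$ into $\Z_2[\tau,\beta^{\pm 1}]$ on the subring where $a \geq 0$, $a - b \geq 0$, and the multiplication of two elements in this range lands in this range), the product on $\pi_{*,*}(kgl)$ is determined by the product on $\pi_{*,*}(KGL)$, which is the free commutative one. Therefore $\pi_{*,*}(kgl) \cong \Z_2[\tau, \beta]$ as a ring. The only mild subtlety — and the closest thing to an obstacle — is checking that the subset of bidegrees cut out by $a \geq 0$, $a - b \geq 0$ is closed under addition, so that no product of two ``connective'' classes escapes the range and the ring structure really is just the evident subring; but this is immediate since both inequalities are preserved under addition of bidegrees.
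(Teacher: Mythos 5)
Your proposal is correct and is exactly the argument the paper intends: the proposition is stated as an immediate consequence of Proposition \ref{prop:pi-KGL} and Corollary \ref{cor:ConCov}, and your bidegree bookkeeping (only the monomials $\tau^i\beta^j$ with $i,j\geq 0$ lie in the range $a\geq 0$, $a-b\geq 0$) together with the ring map $kgl\map KGL$ from Proposition \ref{prop:ConCov-ring} fills in the details the paper leaves implicit.
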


Since we may construct $kgl$ from $KGL$ equivariantly as in 
Remark \ref{rem:action},
it follows that $kgl$ has a $\Z/2$-action.
Next we study the homotopy fixed points spectrum
$kgl^{h\Z/2}$.

\begin{theorem} 
The ring $\pi_{*,*}(kgl^{h\Z/2})$ is 
\[
\frac{\Z_{2}[\tau, h_{1}, a, b, x]}{2h_{1}, \tau h_{1}^{3}, \tau h_{1}x, h_{1}a,
ax, a^{2}=4b, bx=h_{1}^{4}},
\]
where the degree of $\tau$ is $(0,-1)$,
the degree of $h_1$ is $(1,1)$,
the degree of $a$ is $(4,2)$,
the degree of $b$ is $(8,4)$, and
the degree of $x$ is $(-4,0)$.
\end{theorem}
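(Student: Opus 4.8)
The plan is to run the motivic homotopy fixed point spectral sequence of Theorem~\ref{motivicHFP} for the $\Z/2$-equivariant motivic spectrum $kgl$ (which acquires its $\Z/2$-action as in Remark~\ref{rem:action}), and to organize the computation by comparison with the spectral sequence for $KO=KGL^{h\Z/2}$ that was analysed in Proposition~\ref{prop:KGL2} through Theorem~\ref{theo:KGL^{hC_{2}}}. By Proposition~\ref{prop:pi-kgl} the coefficients are $\pi_{*,*}kgl=\Z_{2}[\tau,\beta]$, and since $kgl$ is the connective cover of $KGL$ the map $\pi_{*,*}kgl\to\pi_{*,*}KGL$ is the inclusion of the subring of non-negative powers of $\beta$. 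The $\Z/2$-action preserves the bigrading, so in each bidegree the map $\pi_{p,q}kgl\to\pi_{p,q}KGL$ is either an isomorphism (when $p\geq 0$) or has trivial source; applying $H^{*}(\Z/2;-)$ gives an injection on $E_{2}$-pages. Thus the $E_{2}$-page of the $kgl$ spectral sequence is identified with the subgroup of the $E_{2}$-page of the $KO$ spectral sequence, namely $\Z_{2}[\tau,h_{1},c^{\pm1}]/2h_{1}$, spanned by the classes whose tridegree $(n,p,u)$ satisfies $n+p\geq 0$. Writing $y=h_{1}^{2}c^{-1}$, a class of tridegree $(-2,2,0)$, one checks this subring is $\Z_{2}[\tau,h_{1},c,y]/(2h_{1},\,2y,\,cy-h_{1}^{2})$.

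Next come the differentials. The differential $d_{2}$ vanishes on the $KO$-page for degree reasons, hence vanishes here. Compatibility with the $KO$ spectral sequence together with Lemma~\ref{lem:d3} gives $d_{3}(\tau)=d_{3}(h_{1})=0$ and $d_{3}(c)=\tau h_{1}^{3}$; since $d_{3}$ is a derivation, the relation $cy=h_{1}^{2}$ forces $c\cdot d_{3}(y)=\tau h_{1}^{3}y$, and a short degree count in the $E_{2}$-page leaves only the possibility $d_{3}(y)=\tau h_{1}y^{2}$. One then computes $E_{4}$ directly from this derivation. Because $d_{3}$ increases $n+p$ by two, the computation respects the splitting of $E_{2}$ into its ``$c$-part'' (monomials not involving $y$) and its ``$y$-part'' (monomials not involving $c$), which are glued along $cy=h_{1}^{2}$. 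The $c$-part reproduces the $E_{\infty}$-page of the $KO$ spectral sequence truncated to non-negative powers of $b:=c^{2}$, which is exactly $\pi_{*,*}(ko)$ from Theorem~\ref{thm:main2}. The $y$-part is the new feature: the class $x:=y^{2}$ and its $\tau$-multiples survive because their only $d_{3}$-preimages in the $KO$ spectral sequence (the classes $\tau^{\ell}h_{1}c^{-1}$) have $n+p<0$ and so are absent from the truncated page, whereas $\tau h_{1}y^{2}=d_{3}(y)$ is a genuine boundary. All higher differentials vanish for degree reasons, since the $E_{4}$-page is concentrated in a thin region. The outcome is that the associated graded ring $E_{\infty}$ is
\[
E_{\infty}\;=\;\frac{\Z_{2}[\tau,h_{1},a,b,x]}{(2h_{1},\ \tau h_{1}^{3},\ h_{1}a,\ a^{2}-4b,\ \tau h_{1}x,\ ax,\ bx-h_{1}^{4})},
\]
with $a$, $b$, $x$ represented by $2c$, $c^{2}$, $y^{2}$ in tridegrees $(4,0,2)$, $(8,0,4)$, $(-4,4,0)$.

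It remains to promote these $E_{\infty}$-relations to honest relations in $\pi_{*,*}(kgl^{h\Z/2})$. Most hold automatically because the relevant homotopy group is a single cyclic summand, leaving no room for a correction term; the ones that require an argument (notably $a^{2}=4b$ and the precise torsion of $x$) follow from compatibility with topological realization together with the already-established ring structures of $\pi_{*,*}(ko)$ and $\pi_{*,*}(KO)$, via the maps $ko\to kgl^{h\Z/2}\to KO$ (the first of which is the connective cover, by Corollary~\ref{cor:ConCov} and the structure above).

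I expect the main obstacle to be the edge analysis along the line $n+p=0$: this is precisely where the comparison map to the $KO$ spectral sequence stops being an isomorphism beyond $E_{3}$, and it is responsible both for the survival of $\tau^{\ell}x$ (which vanishes in $\pi_{*,*}(KO)$, since $\tau h_{1}^{3}=0$ there) and for the relation $\tau h_{1}x=0$. One must check carefully that the would-be boundaries killing these classes in the $KO$ spectral sequence really do fall off the truncated page, and conversely that $d_{3}(y)=\tau h_{1}y^{2}$ is nonzero. This edge phenomenon — traceable to the factor of $\tau$ in $d_{3}(c)=\tau h_{1}^{3}$, which is absent from the classical differential $d_{3}(c)=\eta^{3}$ — is exactly what produces the relation $bx=h_{1}^{4}$ with $h_{1}^{4}\neq 0$, in contrast with the classical situation where the analogous right-hand side is $\eta^{4}=0$; this is the curious difference with the computation of $\pi_{*}(ku^{h\Z/2})$ promised in the introduction.
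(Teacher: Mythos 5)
Your proposal is correct and follows essentially the same route as the paper: run the homotopy fixed point spectral sequence for $kgl$, import $d_3(c)=\tau h_1^3$ from Lemma \ref{lem:d3}, deduce $d_3$ on the negative-stem generator by the derivation property, and observe that $E_4=E_\infty$ with no extension problems. The only cosmetic difference is that you identify the $E_2$-page as the $n+p\geq 0$ truncation of the $E_2$-page for $KO$ (with $y=h_1^2c^{-1}$ playing the role of the paper's generator $z$), whereas the paper computes the group cohomology directly; both yield the same page, and your explicit verification that $d_3(y)=\tau h_1 y^2$ and that $\tau^{\ell}x$ has no surviving preimage is exactly the edge analysis the paper leaves implicit.
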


\begin{proof}  
We use the homotopy fixed points spectral sequence
\[
E_2^{n,p,u} = H^p(\Z/2; \pi_{n+p,u} kgl ) \Rightarrow
\pi_{n,u} (kgl^{h\Z/2}).
\]
By direct computation of group cohomology, we find that
\[
E_{2} = \frac{\Z_{2}[\tau, h_{1}, c, z]}{2h_{1}, cz=h_{1}^{2}},
\]
where the degree of $\tau$ is $(0,0,-1)$, 
the degree of $h_1$ is $(1,1,1)$, 
the degree of $c$ is $(4,0,2)$, and
the degree of $z$ is $(-2,2,0)$.
A pictorial representation of $E_2$ is shown in 
Figure \ref{fig:E^{2}kgl}.

\begin{center}
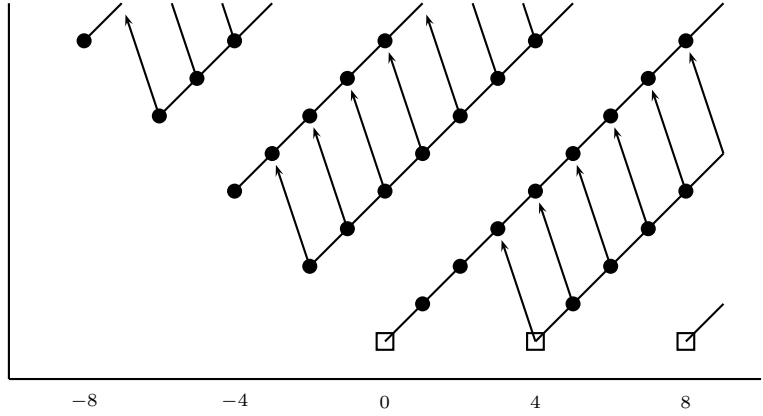
\begin{figure}[htbp!]
\psset{unit=0.5cm}
\begin{pspicture}(-10,-2)(10,10)
\scriptsize

\psline(-10,-1)(10,-1)
\psline(-10,-1)(-10,9)

\rput(0,-1.6){$0$}
\rput(-4,-1.6){$-4$}
\rput(-8,-1.6){$-8$}
\rput(4,-1.6){$4$}
\rput(8,-1.6){$8$}

\psframe(-0.25,-0.25)(0.25,0.25)
\psline(0,0)(1,1)
\pscircle*(1,1){0.2}
\psline(1,1)(2,2)
\pscircle*(2,2){0.2}
\psline(2,2)(3,3)
\pscircle*(3,3){0.2}
\psline(3,3)(4,4)
\pscircle*(4,4){0.2}
\psline(4,4)(5,5)
\pscircle*(5,5){0.2}
\psline(5,5)(6,6)
\pscircle*(6,6){0.2}
\psline(6,6)(7,7)
\pscircle*(7,7){0.2}
\psline(7,7)(8,8)
\pscircle*(8,8){0.2}
\psline(8,8)(9,9)

%%%%%%%%%%%%%%%%%%%%%%%%%%%%%%%%%%%%%%%%%%%%%

\pscircle*(-2,2){0.2}
\psline(-2,2)(-1,3)
\pscircle*(-1,3){0.2}
\psline(-1,3)(0,4)
\pscircle*(0,4){0.2}
\psline(0,4)(1,5)
\pscircle*(1,5){0.2}
\psline(1,5)(2,6)
\pscircle*(2,6){0.2}
\psline(2,6)(3,7)
\pscircle*(3,7){0.2}
\psline(3,7)(4,8)
\pscircle*(4,8){0.2}
\psline(4,8)(5,9)

%%%%%%%%%%%%%%%%%%%%%%%%%%%%%%%%%%%%%%%%%%%%%

\pscircle*(-4,4){0.2}
\psline(-4,4)(-3,5)
\pscircle*(-3,5){0.2}
\psline(-3,5)(-2,6)
\pscircle*(-2,6){0.2}
\psline(-2,6)(-1,7)
\pscircle*(-1,7){0.2}
\psline(-1,7)(0,8)
\pscircle*(0,8){0.2}
\psline(0,8)(1,9)

%%%%%%%%%%%%%%%%%%%%%%%%%%%%%%%%%%%%%%%%%%%%%

\pscircle*(-6,6){0.2}
\psline(-6,6)(-5,7)
\pscircle*(-5,7){0.2}
\psline(-5,7)(-4,8)
\pscircle*(-4,8){0.2}
\psline(-4,8)(-3,9)

%%%%%%%%%%%%%%%%%%%%%%%%%%%%%%%%%%%%%%%%%%%%%

\pscircle*(-8,8){0.2}
\psline(-8,8)(-7,9)

%%%%%%%%%%%%%%%%%%%%%%%%%%%%%%%%%%%%%%%%%%%%%

\psframe(3.75,-0.25)(4.25,0.25)
\psline(4,0)(5,1)
\pscircle*(5,1){0.2}
\psline(5,1)(6,2)
\pscircle*(6,2){0.2}
\psline(6,2)(7,3)
\pscircle*(7,3){0.2}
\psline(7,3)(8,4)
\pscircle*(8,4){0.2}
\psline(8,4)(9,5)

%%%%%%%%%%%%%%%%%%%%%%%%%%%%%%%%%%%%%%%%%%%%%

\psframe(7.75,-0.25)(8.25,0.25)
\psline(8,0)(9,1)

%%%%%%%%%%%%%%%%%%%%%%%%%%%%%%%%%%%%%%%%%%%%%
\psline{->}(4,0)(3.1,2.7)
\psline{->}(5,1)(4.1,3.7)
\psline{->}(6,2)(5.1,4.7)
\psline{->}(7,3)(6.1,5.7)
\psline{->}(8,4)(7.1,6.7)
\psline{->}(9,5)(8.1,7.7)

\psline{->}(-2,2)(-2.9,4.7)
\psline{->}(-1,3)(-1.9,5.7)
\psline{->}(0,4)(-0.9,6.7)
\psline{->}(1,5)(0.1,7.7)
\psline{->}(2,6)(1.1,8.7)
\psline(3,7)(2.33,9)
\psline(4,8)(3.67,9)

\psline{->}(-6,6)(-6.9,8.7)
\psline(-5,7)(-5.67,9)
\psline(-4,8)(-4.33,9)

\end{pspicture}

\caption{The $E_{2}$-page of the homotopy fixed points spectral sequence
 for $kgl^{h\Z/2}$}
\label{fig:E^{2}kgl}
\end{figure}
\end{center}

As in Lemma \ref{lem:d3}, $d_3(c) = \tau h_1^3$.  From this,
it follows that
$E_4$ is equal to
\[
\frac{\Z_{2}[\tau, h_{1}, a, b, x]}
{2h_{1}, \tau h_{1}^{3}, \tau h_{1}x, h_{1}a, ax, a^{2}=4b, bx=h_{1}^{4}},
\] 
where the degree of $\tau$ is $(0,0,-1)$, 
the degree of $h_1$ is $(1,1,1)$,
the degree of $x$ is $(-4,4,0)$,
the degree of $a$ is $(4,0,2)$, and
the degree of $b$ is $(8,0,4)$.  
Here $a$ corresponds to $2c$,
$b$ corresponds to $c^{2}$,
and $x$ corresponds to $z^2$.

For dimension reasons, there are no higher differentials,
and $E_{\infty}$ is equal to  $E_{4}$.  Also for dimension reasons,
there are no extensions to resolve in passing from $E_\infty$
to $\pi_{*,*} (kgl^{h\Z/2})$.
A pictorial representation of $E_\infty$ is shown in 
Figure \ref{fig:E^{4}kgl}.
\end{proof}

\begin{center}
\begin{figure}[htbp!]
\psset{unit=0.5cm}
\begin{pspicture}(-10,-2)(10,10)
\scriptsize

\psline(-10,-1)(10,-1)
\psline(-10,-1)(-10,9)

\rput(0,-1.6){$0$}
\rput(-4,-1.6){$-4$}
\rput(-8,-1.6){$-8$}
\rput(4,-1.6){$4$}
\rput(8,-1.6){$8$}

\psframe(-0.25,-0.25)(0.25,0.25)
\psline(0,0)(1,1)
\pscircle*(1,1){0.2}
\psline(1,1)(2,2)
\pscircle*(2,2){0.2}
\psline(2,2)(3,3)
\pscircle(3,3){0.2}
\psline(3,3)(4,4)
\pscircle(4,4){0.2}
\psline(4,4)(5,5)
\pscircle(5,5){0.2}
\psline(5,5)(6,6)
\pscircle(6,6){0.2}
\psline(6,6)(7,7)
\pscircle(7,7){0.2}
\psline(7,7)(8,8)
\pscircle(8,8){0.2}
\psline(8,8)(9,9)

%%%%%%%%%%%%%%%%%%%%%%%%%%%%%%%%%%%%%%%%%%%%%

\pscircle*(-4,4){0.2}
\psline(-4,4)(-3,5)
\pscircle(-3,5){0.2}
\psline(-3,5)(-2,6)
\pscircle(-2,6){0.2}
\psline(-2,6)(-1,7)
\pscircle(-1,7){0.2}
\psline(-1,7)(0,8)
\pscircle(0,8){0.2}
\psline(0,8)(1,9)

%%%%%%%%%%%%%%%%%%%%%%%%%%%%%%%%%%%%%%%%%%%%%

\pscircle*(-8,8){0.2}
\psline(-8,8)(-7,9)

%%%%%%%%%%%%%%%%%%%%%%%%%%%%%%%%%%%%%%%%%%%%%

\psframe(3.75,-0.25)(4.25,0.25)

\psframe(7.75,-0.25)(8.25,0.25)
\psline(8,0)(9,1)

%%%%%%%%%%%%%%%%%%%%%%%%%%%%%%%%%%%%%%%%%%%%%

\end{pspicture}

\caption{The $E_{\infty}$-page of the homotopy fixed points spectral
sequence for $kgl^{h\Z/2}$}
\label{fig:E^{4}kgl}
\end{figure}
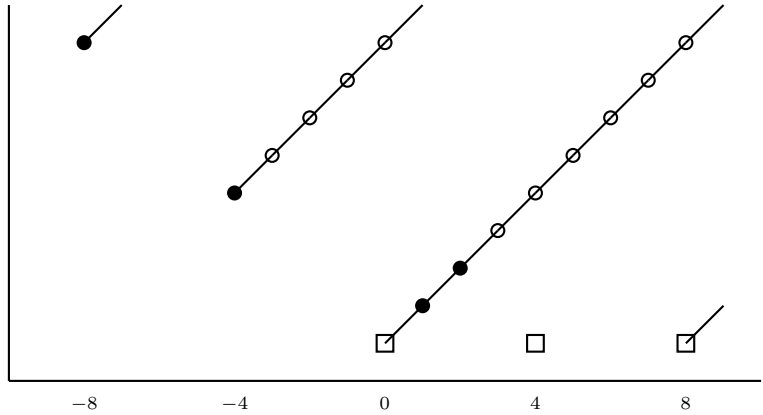
\end{center}

\begin{remark}
We draw the reader's attention to the relation $bx = \tau h_1^4$.
This is a curious difference to the classical case, where the
element $x$ of $\pi_{-2} ku^{h\Z/2}$ annihilates all of the other
generators of $\pi_* ku^{h\Z/2}$.
\end{remark}

\section{The cohomology of $kgl$ and $ko$}
\label{sctn:cohlgy}

In this section, we compute the motivic $\F_2$-cohomology
of $kgl$ and $ko$.

\subsection{The cohomology of $H\Z_2$}
\label{subsctn:HZ-cohlgy}

\begin{definition}
Let $H\Z_2$ be the cofiber of the map $\beta: \Sigma^{2,1} kgl \map kgl$.
\end{definition}

\begin{remark}
As its name suggests, $H\Z_2$ represents motivic cohomology 
with $\Z_2$-coefficients.  We will not use this fact.  Everything
that we need to know about $H\Z_2$ comes from 
its definition as a cofiber.
\end{remark}

\begin{lemma}
The cofiber of the map $2: H\Z_2 \map H\Z_2$ is $H\F_2$.
\end{lemma}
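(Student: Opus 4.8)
The plan is to realize $H\F_2$ as an iterated cofiber built from $kgl$, and then identify the result using the Eilenberg--Mac Lane uniqueness result of Proposition~\ref{prop:EM-unique}. First I would record the homotopy of $H\Z_2$. Applying $\pi_{*,*}$ to the cofiber sequence $\Sigma^{2,1} kgl \xrightarrow{\beta} kgl \map H\Z_2$ and using Proposition~\ref{prop:pi-kgl}, multiplication by $\beta$ on $\pi_{*,*} kgl = \Z_2[\tau,\beta]$ is injective, so the long exact sequence collapses to short exact sequences and yields $\pi_{*,*} H\Z_2 \cong \Z_2[\tau]$, concentrated in bidegrees $(0,b)$ with $b \leq 0$ (since $\tau$ has bidegree $(0,-1)$). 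In particular $\pi_{a,b} H\Z_2$ vanishes unless $a = 0$ and $b \leq 0$.

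Next I would form the cofiber $Y$ of $2 : H\Z_2 \map H\Z_2$ and compute its homotopy. Since $\pi_{*,*} H\Z_2 = \Z_2[\tau]$ with $2$ acting injectively, the long exact sequence again breaks into short exact sequences $0 \map \pi_{*,*} H\Z_2 \xrightarrow{2} \pi_{*,*} H\Z_2 \map \pi_{*,*} Y \map 0$, giving $\pi_{*,*} Y \cong \F_2[\tau]$, still concentrated in bidegrees $(0,b)$ with $b \leq 0$. Meanwhile $\pi_{0,*} H\F_2 = \M_2 = \F_2[\tau]$, also concentrated in those bidegrees. To compare, I would observe that $Y$ is cellular: $kgl$ is cellular by \cite[Theorem 6.2]{DDDI2}, hence $\Sigma^{2,1} kgl$ is cellular, so $H\Z_2$ is a cofiber of cellular spectra and therefore cellular (using the closure of the cellular class under homotopy colimits, Definition~\ref{defn:cellular}), and then $Y$ is likewise cellular as a cofiber of cellular spectra. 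Since $H\F_2$ is cellular by \cite{HKO}, both $Y$ and $H\F_2$ satisfy the hypotheses of Proposition~\ref{prop:EM-unique} with isomorphic $\pi_{0,*}$ as $\Z_2[\tau]$-modules (both $\F_2[\tau]$, on which $2$ acts as zero), so $Y \simeq H\F_2$.

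The main subtlety is not any single computation but making sure the $\Z_2[\tau]$-module structures on $\pi_{0,*}$ match and that the concentration hypothesis of Proposition~\ref{prop:EM-unique} (homotopy zero unless $a=0$, $b \leq 0$) genuinely holds for $Y$; both follow from the grading conventions ($\tau$ in bidegree $(0,-1)$, $\beta$ in bidegree $(2,1)$) once the two short exact sequence arguments are in place. One could alternatively avoid Proposition~\ref{prop:EM-unique} by constructing an explicit map $Y \map H\F_2$ — the unit-type map induced by $kgl \map H\F_2$ or by a direct attaching-map argument — and checking it is an isomorphism on $\pi_{*,*}$ via Proposition~\ref{prop:pi-cellular}; but invoking the uniqueness proposition is cleaner and is exactly what it was set up for. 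If desired, the realization functor provides an independent sanity check, since classically the cofiber of $2 : H\Z \map H\Z$ is $H\F_2$.
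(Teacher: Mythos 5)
Your proposal is correct and follows essentially the same route as the paper: the paper's proof simply observes that the homotopy groups of the cofiber agree with those of $H\F_2$ as $\Z_2[\tau]$-modules and invokes Proposition~\ref{prop:EM-unique}. You have just spelled out the two short exact sequence computations and the cellularity check that the paper leaves implicit.
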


\begin{proof}
As $\Z_2[\tau]$-modules, the homotopy groups of the cofiber are isomorphic
to the homotopy groups of $H\F_2$.  By Proposition \ref{prop:EM-unique},
the cofiber is weakly equivalent to $H\F_2$.
\end{proof}

We will write $p: H\Z_2 \map H\F_2$ for the map from $H\Z_2$ to the cofiber
of $2$.  Also, we will write $\delta: H\F_2 \map \Sigma H\Z_2$ for
the boundary map of the cofiber sequence of $2$.  We will make use of 
the cofiber sequence
\[
\xymatrix{
H\Z_2 \ar[r]^2 & H\Z_2 \ar[r]^p & H\F_2 \ar[r]^\delta & \Sigma H\Z_2.  }
\]

We shall freely substitute the motivic Steenrod algebra $\SA$ for 
$H^{*,*} H\F_2$,
since they are equal by definition.

\begin{lemma}
\label{lem:HZ-delta-p}
The composition $\delta^* p^*: \SA \map \Sigma^{-1} \SA$
is equal to right multiplication by $\Sq{1}$.
\end{lemma}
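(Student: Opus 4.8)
The plan is to identify the composition $\delta^* p^*$ with the Bockstein-type operation associated to the cofiber sequence $H\Z_2 \xrightarrow{2} H\Z_2 \xrightarrow{p} H\F_2 \xrightarrow{\delta} \Sigma H\Z_2$, and then to recognize that operation as $\Sq{1}$. First I would note that $\delta^* p^* \colon H^{*,*}H\F_2 \to \Sigma^{-1} H^{*,*}H\F_2$ is, by definition, the map on $H\F_2$-cohomology induced by the composite $H\F_2 \xrightarrow{\delta} \Sigma H\Z_2 \xrightarrow{\Sigma p} \Sigma H\F_2$; this composite is precisely the connecting map in the long exact sequence obtained by smashing the cofiber sequence of $2 \colon H\Z_2 \to H\Z_2$ with $H\F_2$ and identifying $H\Z_2 \Smash H\F_2 \simeq H\F_2$ (the mod-$2$ reduction of the integral cohomology operation, which is the classical characterization of the Bockstein). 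So the content is: the stable operation $H\F_2 \to \Sigma H\F_2$ arising this way equals $\Sq{1}$.

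The cleanest way to pin this down is to compute both sides on a single generator and invoke that an operation of bidegree $(1,0)$ on $H^{*,*}H\F_2$ is determined by what it does in low degrees — more precisely, by its value on the fundamental class, using that $\SA$ is generated in the relevant range by $\Sq{1}$ and that a degree-$(1,0)$ operation raising topological degree by exactly $1$ and weight by $0$ is a scalar multiple of $\Sq{1}$. To evaluate $\delta^* p^*$ on the unit class $1 \in \SA = H^{0,0}H\F_2$, I would use the module structure: $p^*(1)$ is the generator of $H^{0,0}H\Z_2$ (the reduction map is an iso in bidegree $(0,0)$), and $\delta^*$ applied to this generator detects whether the class lifts to $H\Z_2$-cohomology. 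Since $H^{0,0}H\Z_2 = \Z_2$ and the relevant homotopy computation (Proposition \ref{prop:pi-compute}, or the explicit description of $H\Z_2$ as the cofiber of $\beta$ on $kgl$) shows the extension is nontrivial, $\delta^* p^*(1)$ is the nonzero class in $H^{1,0} H\F_2$, which is $\Sq{1}$. Alternatively, and perhaps more robustly, I would argue via topological realization: the realization of this cofiber sequence is the classical $H\Z/2 \xrightarrow{2} H\Z/2 \to H\Z/2 \to \Sigma H\Z/2$, for which $\delta^* p^*$ is well-known to be the classical $Sq^1$; since realization sends motivic $\Sq{1}$ to classical $Sq^1$ and is injective on the relevant piece of the Steenrod algebra in this bidegree, the motivic identity follows.

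The main obstacle is making the "detected by a single value in low degrees" step rigorous over the motivic Steenrod algebra, where one must be careful that $\M_2$-linearity plus the bidegree constraint $(1,0)$ really does force uniqueness: one needs that the only stable $\M_2$-linear operations of bidegree $(1,0)$ are $\M_2$-multiples of $\Sq{1}$, and that the coefficient is forced to be $1$ (not $0$) by the nontriviality of the integral-to-mod-$2$ extension. I expect this to follow from Voevodsky's structure theorem for $\SA$ (cited as \cite{V1},\cite{V2} in the introduction) together with the fact established above that $\delta^* p^*(1) \neq 0$; if one prefers to avoid even that, the realization argument sidesteps the issue entirely, at the cost of a non-internal proof. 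I would present the realization argument as the primary route and remark that it can be made internal.
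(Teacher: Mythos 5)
Your reduction is the same first step as the paper's: since $\delta^* p^*$ is right composition with the single element $\Sigma p \circ \delta$ of $[H\F_2,\Sigma H\F_2]$, and the bidegree-$(1,0)$ part of $\SA$ is $\F_2\{\Sq{1}\}$, everything comes down to showing that element is nonzero. Where you diverge is in how you show nonvanishing. The paper's proof is internal: it maps the cofiber sequence of $2$ on $S^{0,0}$ into the cofiber sequence of $2$ on $H\Z_2$, so the question becomes whether $\delta'^* p'^*$ is nontrivial on $H^{*,*}(M2)$ for the mod $2$ Moore spectrum $M2$ --- which it is, because $\Sq{1}$ is the Bockstein and sends the bottom generator of $H^{*,*}(M2)$ to the top one; a diagram chase then gives $\delta^* p^*(1)=\Sq{1}$. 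Your primary route, topological realization, is a genuinely different and viable alternative --- it is in fact exactly how the paper proves the analogous Lemma \ref{lem:kgl-delta-p} for $kgl$. It buys you the classical identification of the Bockstein for free, at the cost of having to justify that realization carries this motivic cofiber sequence to its classical counterpart (not completely formal here, since $H\Z_2$ is defined as the cofiber of $\beta$ on the connective cover $kgl$, and realization does not obviously preserve connective covers). Your secondary ``internal'' argument, however, has a gap as stated: the assertion that $\delta^* p^*(1)\neq 0$ because ``$H^{0,0}H\Z_2=\Z_2$ and the extension is nontrivial'' is not yet a proof. Nonvanishing of the composite $\Sigma p\circ\delta$ must be detected by evaluating it on some test object on which the Bockstein is visibly nonzero, and the natural choice is precisely the Moore spectrum $M2$ --- which returns you to the paper's argument. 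If you want the internal version, you should carry out that evaluation explicitly rather than appeal to the extension being nontrivial.
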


\begin{proof}
Since $\delta^* p^*$ is an $\SA$-module map, it suffices to 
compute $\delta^* p^* (1)$.

Consider the diagram
\[
\xymatrix{
S^{0,0} \ar[r]^{p'}\ar[d] & M2 \ar[d]\ar[r]^{\delta'} & S^{1,0} \ar[d] \\
H\Z_2 \ar[r]_p & H\F_2 \ar[r]_\delta & \Sigma H\Z_2,  }
\]
where both rows are cofiber sequences induced by 
the map $2$.  Here $M2$ is the motivic mod 2 Moore spectrum.
Recall that $H^{*,*} M2$ is free over $\M_2$ on two generators
$x$ and $y$ of degrees $(0,0)$ and $(1,0)$, and $\Sq{1} x = y$ because
$\Sq{1}$ is the same as the Bockstein \cite[Section 9]{V2}.  It follows
that the map $\delta'^* p'^*:H^{*,*} M2 \map H^{*,*} \Sigma^{-1} M2$
takes $x$ to $y$.

The elements $1$ and $\Sq{1}$ of $\SA$ map
to $x$ and $y$ respectively in $H^{*,*} M2$.  
A diagram chase now shows that $\delta^* p^*$ takes $1$ to $\Sq{1}$.  
\end{proof}

\begin{theorem}
\label{thm:HZ-cohlgy}
The motivic $\F_2$-cohomology $H^{*,*} H\Z_2$ of $H\Z_2$
is equal to $\SA//\SE(0)$.
\end{theorem}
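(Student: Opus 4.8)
The plan is to use the cofiber sequence $H\Z_2 \xrightarrow{2} H\Z_2 \xrightarrow{p} H\F_2 \xrightarrow{\delta} \Sigma H\Z_2$ and analyze the induced long exact sequence in motivic $\F_2$-cohomology. Since $H^{*,*}H\F_2 = \SA$ and multiplication by $2$ is zero on mod-$2$ cohomology, the long exact sequence breaks into short exact sequences
\[
0 \map H^{*,*}(\Sigma H\Z_2) \xrightarrow{p^*\circ(\text{shift})} \Sigma \SA \xrightarrow{\delta^*} H^{*,*}(\Sigma H\Z_2) \map 0,
\]
or, after reindexing, a short exact sequence relating $H^{*,*}H\Z_2$ to $\SA$ via the maps $p^*$ and $\delta^*$. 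More precisely, from the cofiber sequence one gets that $p^*: H^{*,*}H\Z_2 \map \SA$ is injective with image the kernel of $\delta^*: \SA \map \Sigma^{-1}H^{*,*}H\Z_2$, while $\delta^*$ is surjective with kernel the image of $p^*$.

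The key input is Lemma \ref{lem:HZ-delta-p}: the composite $\delta^* p^*: \SA \map \Sigma^{-1}\SA$ is right multiplication by $\Sq{1}$. First I would use this to identify the image of $p^*$ more explicitly. Since $p^*$ is injective, $H^{*,*}H\Z_2 \cong \mathrm{im}(p^*) = \ker(\delta^*) \subseteq \SA$. I want to show $\ker(\delta^*)$ is exactly the image of right multiplication by $\Sq{1}$ on $\SA$, so that $H^{*,*}H\Z_2 \cong \SA\cdot\Sq{1}$. One containment is immediate: anything in $\mathrm{im}(\cdot\Sq{1})$ lies in $\mathrm{im}(p^*\circ\text{(something)})$ hence in $\ker\delta^*$ — actually the cleanest route is to note $\delta^* p^* = \cdot\Sq{1}$ forces $\mathrm{im}(p^*)\cdot\Sq{1} = \delta^*(\mathrm{im}(p^*)) \subseteq \mathrm{im}(\delta^* p^*)$, but let me instead use a counting/exactness argument. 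By Lemma \ref{lem:subalg-mult}(1), the kernel of $\cdot\Sq{1}$ on $\SA$ equals its image, so $\SA$ as a complex $(\SA, \cdot\Sq{1})$ is exact. Feeding the factorization $\delta^* p^* = \cdot\Sq{1}$ together with $p^*$ injective and $\delta^*$ surjective through the short exact sequences lets one conclude, by a diagram chase / dimension count in each bidegree, that $\mathrm{im}(p^*) = \ker(\delta^*) = \mathrm{im}(\cdot\Sq{1}) = \ker(\cdot\Sq{1})$.

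Finally I would identify $\mathrm{im}(\cdot\Sq{1})$ as $\SA//\SE(0)$, suitably shifted. Recall $\SE(0) = \M_2[\Sq{1}]/\Sq{1,1}$ is exterior on $\Sq{1}$, so the short exact sequence of Lemma \ref{lem:subalg-exact},
\[
0 \map \Sigma\,\SA//\SE(0) \xrightarrow{\cdot\Sq{1}} \SA \map \SA//\SE(0) \map 0,
\]
identifies $\mathrm{im}(\cdot\Sq{1}) \cong \Sigma\,\SA//\SE(0)$. Tracking the single degree shift (the boundary map $\delta$ raises topological degree by one, and $p^*$ preserves degree) shows that this shift cancels, giving $H^{*,*}H\Z_2 \cong \SA//\SE(0)$ as claimed. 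I would also remark that this is an isomorphism of $\SA$-modules, which follows because $p^*$, $\delta^*$, and $\cdot\Sq{1}$ are all $\SA$-module maps. The main obstacle is the bookkeeping in the middle step: one must be careful that the two occurrences of $H^{*,*}H\Z_2$ in the long exact sequence (at adjacent degrees) are handled consistently, and that the identification $\ker(\delta^*) = \mathrm{im}(\cdot\Sq{1})$ is genuinely forced rather than merely consistent — this is where exactness of $(\SA,\cdot\Sq{1})$ from Lemma \ref{lem:subalg-mult}(1) does the real work, and it should be invoked explicitly.
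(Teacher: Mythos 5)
Your overall strategy is the paper's: exploit the cofiber sequence of multiplication by $2$, the resulting splitting of the long exact sequence in cohomology, Lemma \ref{lem:HZ-delta-p}, and the algebra of Lemmas \ref{lem:subalg-mult} and \ref{lem:subalg-exact}. But your variance is backwards, and this is not just notation: it changes the answer by a suspension. Since cohomology is contravariant, $p\colon H\Z_2 \map H\F_2$ induces $p^*\colon \SA \map H^{*,*}H\Z_2$ and $\delta$ induces $\delta^*\colon H^{*,*}\Sigma H\Z_2 \map \SA$; because $2^*=0$ on $\F_2$-cohomology, the long exact sequence splits into short exact sequences
\[
0 \map H^{*,*}\Sigma H\Z_2 \xrightarrow{\ \delta^*\ } \SA \xrightarrow{\ p^*\ } H^{*,*} H\Z_2 \map 0,
\]
so $p^*$ is \emph{surjective} and $\delta^*$ is \emph{injective} --- the opposite of what you assert. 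Consequently $H^{*,*}H\Z_2$ is a \emph{quotient} of $\SA$, namely $\SA/\mathrm{im}(\delta^*)$, not the submodule $\ker(\delta^*)$. Your route identifies $H^{*,*}H\Z_2$ with $\mathrm{im}(\cdot\Sq{1})\cong \Sigma\,\SA//\SE(0)$, which is off by one suspension from the stated theorem (it vanishes in degree $(0,0)$, where $H^{*,*}H\Z_2$ does not), and the closing remark that ``the shift cancels'' is an assertion rather than an argument; it is exactly where the discrepancy is hidden. Note also that with your stated directions the composite $\delta^*p^*$ would be a self-map of $H^{*,*}H\Z_2$, so it could not literally be right multiplication by $\Sq{1}$ on $\SA$ as in Lemma \ref{lem:HZ-delta-p}; your conventions are not internally consistent with the lemma you invoke.

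The repair is short and in fact simpler than what you wrote: surjectivity of $p^*$ gives $H^{*,*}H\Z_2 \cong \SA/\ker(p^*) = \SA/\mathrm{im}(\delta^*)$; surjectivity of the suspended $p^*$ gives $\mathrm{im}(\delta^*) = \mathrm{im}(\delta^*p^*) = \mathrm{im}(\cdot\Sq{1}) = \SA\cdot\Sq{1}$ by Lemma \ref{lem:HZ-delta-p}; and $\SA/\SA\Sq{1} = \SA//\SE(0)$ by definition. In particular the exactness statement of Lemma \ref{lem:subalg-mult}(1), which you flag as ``doing the real work,'' is not actually needed for this identification --- it enters only in establishing exactness of the algebraic sequence of Lemma \ref{lem:subalg-exact}, i.e.\ the injectivity of $\cdot\Sq{1}$ on $\Sigma\,\SA//\SE(0)$. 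The paper packages the same content as a map of short exact sequences (topological row to algebraic row, identity on the middle term $\SA$) and concludes that $\overline{p}^*$ is an isomorphism; once your variance is corrected, your argument becomes an unrolled version of that.
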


\begin{proof}
The composition $\delta p$ is null-homotopic, so $p^* \delta^*$ is zero.
This implies that $p^* \delta^* p^*$ is zero.  
By Lemma \ref{lem:HZ-delta-p}, this shows that $p^*$ annihilates
the left ideal generated by $\Sq{1}$.  Hence $p^*$ extends 
to a map
$\overline{p}^*: \SA//\SE(0) \map H^{*,*} H\Z_2$.
This gives us a commutative diagram
\[
\xymatrix{
0 \ar[r] & H^{*,*} \Sigma H\Z_2 \ar[d]_{\Sigma\overline{p}^*}\ar[r]^{\delta^*} &
    H^{*,*} H\F_2 \ar[d]_{\cong}\ar[r]^{p^*} & 
    H^{*,*} H\Z_2 \ar[d]_{\overline{p}^*}\ar[r] & 0 \\
0 \ar[r] & \Sigma \SA//\SE(0) \ar[r] & \SA \ar[r] & \SA// \SE(0) \ar[r] & 0.  }
\]
The top row is exact because 
the map $H^{*,*} H\Z_2 \map H^{*,*}H\Z_2$ induced by $2$ is zero.
The bottom row is exact because of Lemma \ref{lem:subalg-exact}.

The right square commutes by definition of $\overline{p}^*$, and
the left square commutes by Lemma \ref{lem:HZ-delta-p}.
It follows that $\overline{p}^*$ is an isomorphism.
\end{proof}

\subsection{The cohomology of $kgl$}
\label{subsctn:kgl-cohlgy}

Having computed the cohomology of $H\Z_2$, we will now exploit the
cofiber sequence
\[
\xymatrix{
\Sigma^{2,1} kgl \ar[r]^\beta & kgl \ar[r]^p & H\Z_2 \ar[r]^\delta &
\Sigma^{3,1} kgl }
\]
in order to compute the cohomology of $kgl$.  Beware that 
$p$ and $\delta$ are different than (but analogous to) the 
maps of the same name in Section \ref{subsctn:HZ-cohlgy}.

Because of Theorem \ref{thm:HZ-cohlgy}, we shall freely substitute
$\SA//\SE(0)$ for $H^{*,*} H\Z_2$.

\begin{lemma}
\label{lem:kgl-delta-p}
The composition $\delta^* p^*: \SA//\SE(0) \map \Sigma^{-1} \SA//\SE(0)$
is equal to right multiplication by $\SQ{1}$.
\end{lemma}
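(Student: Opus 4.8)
The plan is to mimic the proof of Lemma \ref{lem:HZ-delta-p} one stage up, replacing the Moore spectrum $M2$ by a spectrum whose cohomology detects the operation $\SQ{1}$. Since $\delta^* p^*$ is a map of $\SA$-modules (both $p$ and $\delta$ are maps of spectra, so they induce $\SA$-module maps on cohomology, and $\SA//\SE(0)$ is a cyclic $\SA$-module generated by the class $1$ in bidegree $(0,0)$), it suffices to compute $\delta^* p^*(1)$ and check that it equals $1 \cdot \SQ{1} = \SQ{1}$ in $\Sigma^{-1}\SA//\SE(0)$. The element $\SQ{1} = \Sq{1,2}+\Sq{2,1}$ has bidegree $(3,1)$, which matches the shift built into the cofiber sequence $\Sigma^{2,1} kgl \to kgl \to H\Z_2 \to \Sigma^{3,1} kgl$, so the degrees are consistent.

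First I would set up the analogue of the square in Lemma \ref{lem:HZ-delta-p}: smash the cofiber sequence $\Sigma^{2,1}kgl \xrightarrow{\beta} kgl \xrightarrow{p} H\Z_2 \xrightarrow{\delta} \Sigma^{3,1}kgl$ against $H\Z_2$, or better, map a small finite spectrum into it. The natural small model is the two-cell complex $C$ built from the cofiber sequence $S^{2,1} \xrightarrow{\beta} S^{0,0} \to C \to S^{3,1}$, obtained by pulling back along the unit map $S^{0,0} \to kgl$; its cohomology $H^{*,*}C$ is free over $\M_2$ on generators $x$ in bidegree $(0,0)$ and $y$ in bidegree $(3,1)$. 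The content is to identify the action of $\SQ{1}$ on $H^{*,*}C$, i.e. to show $\SQ{1} x = y$. This is the step where I expect the real work to lie: one must argue that the $k$-invariant of this two-cell complex, which a priori lands in $[\,S^{0,0}, \Sigma^{3,1} S^{0,0}\,]$ detecting something in the $(3,1)$-stem, is detected in mod $2$ cohomology precisely by $\SQ{1}$. Equivalently, one can recognize $C$ as the bottom two cells of $kgl$ and invoke the classical fact (realized via topological realization to pin down the ambiguity) that $\SQ{1}$, i.e. $Q_1$, is the relevant first nonzero operation on $H^*(ku)$ above $\Sq{1}$; alternatively one builds $C$ directly from the geometry of $DQ_\infty$ or from $\Sigma^{\infty}\C\p^\infty$ and reads off the $\SA(1)$-action from Proposition \ref{prop:DQinfty-A(1)-module}, where $\SQ{1}$ visibly carries the bottom class to the class in degree $3$.

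Once $\SQ{1} x = y$ is established, the argument closes exactly as before. The composite $\delta'^* p'^* : H^{*,*} C \to H^{*,*}\Sigma^{-1}C$ sends $x$ to $y$: indeed $p'^*$ sends the bottom cohomology class of $H\Z_2$ to $x$ and $\delta'^*$ detects the attaching map, which by the previous paragraph is $\SQ{1}$. Chasing the commutative ladder
\[
\xymatrix{
S^{0,0} \ar[r]^{p'}\ar[d] & C \ar[d]\ar[r]^{\delta'} & S^{3,1} \ar[d] \\
kgl \ar[r]_p & H\Z_2 \ar[r]_\delta & \Sigma^{3,1} kgl,
}
\]
where the vertical maps are units, shows that $\delta^* p^*(1) = \SQ{1}$, and since $\delta^*p^*$ is $\SA$-linear this gives right multiplication by $\SQ{1}$ on all of $\SA//\SE(0)$. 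The one subtlety to keep honest is that $p$ and $\delta$ here are the new maps associated to the $kgl$-cofiber sequence, not the ones from Section \ref{subsctn:HZ-cohlgy}; the identification of $H^{*,*}H\Z_2$ with $\SA//\SE(0)$ from Theorem \ref{thm:HZ-cohlgy} is what lets us phrase the target as $\Sigma^{-1}\SA//\SE(0)$ in the first place.
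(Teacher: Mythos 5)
Your main construction does not go through, and the obstruction is precisely why the paper's proof of this lemma looks different from the proofs of Lemmas \ref{lem:HZ-delta-p} and \ref{lem:ko-delta-p}. First, the two-cell complex $C$ you propose does not exist: to form a cofiber sequence $S^{2,1}\map S^{0,0}\map C\map S^{3,1}$ by ``pulling $\beta$ back along the unit'' you would need the Bott class to lift to an element of $\pi_{2,1}S^{0,0}$, and it does not --- this is exactly what distinguishes $\beta$ from $2$ and $\eta$, which do come from the sphere and hence furnish the finite models $M2$ and $C\eta$ in the neighboring lemmas. Second, and more fundamentally, even if some spectrum with reduced cohomology free over $\M_2$ on classes $x$ and $y$ in bidegrees $(0,0)$ and $(3,1)$ and nothing in between were available, it could never satisfy $\SQ{1}x=y$: each summand of $\SQ{1}=\Sq{1}\Sq{2}+\Sq{2}\Sq{1}$ factors through the cohomology of the complex in topological degree $1$ or $2$, which vanishes, so $\SQ{1}x=0$ automatically. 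Unlike $\Sq{1}$ and $\Sq{2}$, the operation $\SQ{1}$ is decomposable and cannot be detected on a two-cell complex; its nontrivial right action on $\SA//\SE(0)$ depends on the presence of the class $\Sq{2}$ in bidegree $(2,1)$. Your fallback via $DQ_\infty$ does not repair this as stated: Proposition \ref{prop:DQinfty-A(1)-module} does show $\SQ{1}a=\Sq{2}\Sq{1}a=b^2$, but no comparison between $DQ_\infty$ and the cofiber sequence for $kgl$ is set up anywhere, so nothing about $\delta^*p^*$ follows from it.

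The paper's actual proof is the route you mention only parenthetically: it quotes Adams's analysis of the classical cofiber sequence $\Sigma^2 ku\map ku\map H\Z\map\Sigma^3 ku$, where $\delta'^*p'^*(1)=Q_1$ is established by working with the whole spectrum $H\Z$ (equivalently, with the first $k$-invariant of $ku$), and then transports the conclusion to the motivic setting by topological realization, which carries the motivic cofiber sequence to the classical one and $\SQ{1}$ to $Q_1$. An argument internal to motivic homotopy theory would have to reproduce something like that $k$-invariant computation for $kgl$ itself; mapping a finite complex into the cofiber sequence cannot do the job here.
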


\begin{proof}
Since $\delta^* p^*$ is an $\SA$-module map,
it suffices to show that $\delta^* p^* (1) = \SQ{1}$.

Consider the classical case.  We have a cofiber sequence
\[
\xymatrix{
\Sigma^2 ku \ar[r]^\beta & ku \ar[r]^{p'} & H\Z \ar[r]^{\delta'} & \Sigma^3 ku. }
\]
It follows from the arguments of \cite[p.\ 366]{Adams} that
$\delta'^* p'^*(1) = Q_1$.
Since our motivic computation must be
compatible with the classical computation under topological realization,
it follows that $\delta^* p^*$ takes
$1$ to $\SQ{1}$.  
\end{proof}

\begin{theorem}
\label{thm:kgl-cohlgy}
The motivic $\F_2$-cohomology $H^{*,*} kgl$ of $kgl$
is equal to $\SA//\SE(1)$.
\end{theorem}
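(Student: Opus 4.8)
The plan is to mimic the argument used for Theorem~\ref{thm:HZ-cohlgy}, replacing the cofiber sequence induced by $2$ with the cofiber sequence induced by $\beta$. First I would invoke the cofiber sequence
\[
\xymatrix{
\Sigma^{2,1} kgl \ar[r]^\beta & kgl \ar[r]^p & H\Z_2 \ar[r]^\delta & \Sigma^{3,1} kgl }
\]
and apply $H^{*,*}(-)$ to obtain a long exact sequence of $\SA$-modules. Since $\beta$ induces the zero map on $H^{*,*}$ (its realization is multiplication by the Bott class on $H^{*,*} ku$, which is zero because $H^{*,*} H\Z_2$ is concentrated appropriately; more directly, $\beta^*$ raises weight in a way incompatible with being nonzero on $\SA//\SE(0)$, or simply: the long exact sequence forces it once we know the other maps), this long exact sequence breaks into short exact sequences
\[
\xymatrix{
0 \ar[r] & H^{*,*} \Sigma^{3,1} kgl \ar[r]^-{\delta^*} & H^{*,*} H\Z_2 \ar[r]^-{p^*} & H^{*,*} kgl \ar[r] & 0. }
\]

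Next I would use Lemma~\ref{lem:kgl-delta-p}, which identifies $\delta^* p^* : \SA//\SE(0) \map \Sigma^{-1}\SA//\SE(0)$ with right multiplication by $\SQ{1}$. Since $p^* \delta^* = 0$ (the composite $\delta p$ being null), we get $p^*\delta^* p^* = 0$, so $p^*$ annihilates the left ideal of $\SA//\SE(0)$ generated by $\SQ{1}$. Hence $p^*$ factors through a map $\overline{p}^* : \SA//\SE(0)\big/ (\SE(0)\text{-augmentation-ideal acting, then }\SQ{1}) = \SA//\SE(1) \map H^{*,*} kgl$; here I am using that $\SE(1)$ is the subalgebra generated by $\SQ{0}=\Sq{1}$ and $\SQ{1}$, so quotienting $\SA//\SE(0)$ further by the left ideal generated by $\SQ{1}$ yields exactly $\SA//\SE(1)$. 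Then I would assemble the commutative diagram
\[
\xymatrix{
0 \ar[r] & H^{*,*} \Sigma^{3,1} kgl \ar[d]\ar[r]^-{\delta^*} & H^{*,*} H\Z_2 \ar[d]_{\cong}\ar[r]^-{p^*} & H^{*,*} kgl \ar[d]_{\overline{p}^*}\ar[r] & 0 \\
0 \ar[r] & \Sigma^{3,1} \SA//\SE(1) \ar[r]^-{\cdot\SQ{1}} & \SA//\SE(0) \ar[r] & \SA//\SE(1) \ar[r] & 0, }
\]
whose bottom row is the second short exact sequence of Lemma~\ref{lem:subalg-exact}, whose right square commutes by construction of $\overline{p}^*$, and whose left square commutes by Lemma~\ref{lem:kgl-delta-p}. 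The five lemma (or a direct diagram chase) then shows $\overline{p}^*$ is an isomorphism, which is the assertion of the theorem.

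The main obstacle, and the step requiring the most care, is verifying that $\beta^*$ is zero on $H^{*,*}$ so that the long exact sequence splits into short exact sequences; everything else is formal once Lemmas~\ref{lem:subalg-exact}, \ref{lem:kgl-delta-p}, and Theorem~\ref{thm:HZ-cohlgy} are in hand. The cleanest route is probably to observe that $H^{*,*} kgl$ is concentrated in bidegrees with $a - b \geq 0$ (since $kgl$ is a connective cover and one can bound its cohomology degreewise), whereas $\beta$ has bidegree $(2,1)$, forcing $\beta^*$ to shift into a region where it must vanish — or, more robustly, to deduce the splitting a posteriori from the fact that $\delta^*$ is injective, which itself follows from Lemma~\ref{lem:kgl-delta-p} together with part (2) of Lemma~\ref{lem:subalg-mult} (the kernel of $\cdot\,\SQ{1}$ equals its image, so $\delta^*$ followed by the surjection has kernel exactly the image of $\cdot\,\SQ{1}$, which combined with exactness pins down $\ker\delta^*$ and forces $\beta^* = 0$). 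I would present the argument in whichever of these forms is shortest given what has already been established.
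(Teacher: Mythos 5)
Your overall skeleton (construct $\overline{p}^*$ from the relation $p^*\delta^*p^*=0$ and Lemma \ref{lem:kgl-delta-p}, then compare with the short exact sequence of Lemma \ref{lem:subalg-exact}) is exactly the paper's, but the step you yourself flag as the main obstacle --- that $\beta^*=0$, so that the long exact sequence breaks into short exact sequences --- is not actually established by any of the routes you offer, and this is a genuine gap. Topological realization only constrains the realized map, not the motivic one. The bidegree argument rests on a false premise: $H^{*,*}kgl$ is \emph{not} concentrated in bidegrees with $a-b\geq 0$ (it contains $\M_2=\F_2[\tau]$ with $\tau$ in bidegree $(0,1)$); the only vanishing available a priori is $H^{a,b}(kgl)=0$ for $a<0$, from Lemma \ref{lem:pos-neg}. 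And the ``a posteriori'' route is circular: to deduce injectivity of $\delta^*$ from $\delta^*p^*=\cdot\,\SQ{1}$ and Lemma \ref{lem:subalg-mult}(2), you must write an arbitrary class of $H^{*,*}\Sigma^{3,1}kgl$ as $p^*$ of something, i.e.\ you must already know $p^*$ is surjective --- which by exactness is equivalent to $\beta^*=0$, the statement you are trying to prove.

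There is a second, related gap: even granting the short exact sequences, the five lemma does not apply directly, because the left-hand vertical map in your diagram is $\Sigma^{3,1}\overline{p}^*$, a shifted copy of the very map whose bijectivity is at issue. The paper resolves both problems at once by an induction on topological degree: the base case is $H^{a,b}(kgl)=0=(\SA//\SE(1))^{a,b}$ for $a<0$; and if $\overline{p}^*$ is an isomorphism in degrees $a<n$, then $\Sigma^{3,1}\overline{p}^*$ is an isomorphism in degrees $a<n+3$, whence $\delta^*$ is injective there (by commutativity of the left square and injectivity of $\cdot\,\SQ{1}$ on $\Sigma^{3,1}\SA//\SE(1)$), whence $\beta^*$ vanishes in the relevant range, the top row becomes short exact there, and $\overline{p}^*$ is an isomorphism in degrees $a<n+3$. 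You should replace your global appeal to ``$\beta^*=0$ plus the five lemma'' with this bootstrapping argument (or an equivalent one anchored in the connectivity of $kgl$).
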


\begin{proof}
The composition $\delta p$ is null-homotopic, so $p^* \delta^*$ is zero.
This implies that $p^* \delta^* p^*$ is zero.  
By Lemma \ref{lem:kgl-delta-p}, this shows that $p^*$ annihilates
the left ideal generated by $\SQ{1}$.  Hence $p^*$ extends 
to a map
$\overline{p}^*: \SA//\SE(1) \map H^{*,*} kgl$.
This gives us a commutative diagram
\[
\xymatrix{
H^{*,*} \Sigma kgl \ar[r]^{\Sigma\beta^*} & 
    H^{*,*} \Sigma^{3,1} kgl\ar[d]_{\Sigma^{3,1}\overline{p}^*}\ar[r]^{\delta^*}&
    H^{*,*} H\Z_2 \ar[d]_{\cong}\ar[r]^{p^*} & 
    H^{*,*} kgl \ar[d]_{\overline{p}^*} \ar[r]^{\beta^*} & 
    H^{*,*} \Sigma^{2,1} kgl \\
0 \ar[r] & \Sigma^{3,1} \SA//\SE(1) \ar[r] & \SA//\SE(0) \ar[r] & 
  \SA// \SE(1) \ar[r] & 0.  }
\]
The bottom row is exact because of Lemma \ref{lem:subalg-exact}.
The right square commutes by definition of $\overline{p}^*$, and
the left square commutes by Lemma \ref{lem:kgl-delta-p}.

We already know that $\overline{p}^*$ is surjective because the
right square commutes.  We shall prove by induction that
$\overline{p}^*$ is an isomorphism.
The base case occurs in bidegrees $(a,b)$ with $a<0$.
If $a<0$, then 
$H^{a,b} kgl$ is zero by Lemma \ref{lem:pos-neg}, and 
$\SA//\SE(1)$ is also zero by definition.

Now suppose that $\overline{p}^*$ is an isomorphism in bidegrees
$(a,b)$ for $a < n$.  Then
$\Sigma^{3,1} \overline{p}^*$ is an isomorphism in bidegrees
$(a,b)$ for $a< n+3$.  
This implies that $\delta^*$ is injective in bidegrees $(a,b)$
for $a<n+3$, so $\Sigma \beta^*$ is zero in the same bidegrees.
In other words, $\beta^*$ is zero in bidegrees $(a,b)$ with $a<n+4$.
We have now shown that the top sequence in the above diagram
splits in bidegrees $(a,b)$ with $a<n+3$.  It follows
that $\overline{p}^*$ is an isomorphism in bidegrees $(a,b)$ with
$a < n+3$.  This completes the induction step.
\end{proof}

\subsection{The cohomology of $ko$}
\label{subsctn:ko-cohlgy}

In order to compute the cohomology of $ko$, we will use an argument that
is very similar to the argument of Section \ref{subsctn:kgl-cohlgy}.

Recall the motivic Hopf map $\eta$ in $\pi_{1,0}$.  The shortest
description of $\eta$ is the projection $\AAA^2-0 \map \p^1$,
because $\AAA^2-0$ is a model for $S^{3,2}$ and $\p^1$ is a model 
for $S^{2,1}$.

\begin{lemma}
\label{lem:cofiber-eta}
The cofiber of the map $\Sigma^{1,1} ko \map ko$ induced by $\eta$
is weakly equivalent to $kgl$.
\end{lemma}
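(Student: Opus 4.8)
### Proof proposal for Lemma \ref{lem:cofiber-eta}

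The plan is to mimic the classical proof that the cofiber of $\eta : \Sigma S^0 \to S^0$ smashed with $ko$ recovers $ku$, working instead with the connective covers constructed in Section \ref{sec:ConCov}. Let $C$ denote the cofiber of the map $\Sigma^{1,1} ko \to ko$ induced by $\eta$. First I would produce a candidate map $C \to kgl$. The evident composite $ko \to KO = KGL^{h\Z/2} \to KGL$ gives a ring map $ko \to KGL$; one checks that the composite $\Sigma^{1,1} ko \xrightarrow{\eta} ko \to KGL$ is null-homotopic, because $\eta$ acts as zero on $\pi_{*,*} KGL = \Z_2[\tau,\beta^{\pm 1}]$ (multiplication by $\eta$ raises topological degree by $1$ without changing weight, and $\pi_{*,*}KGL$ is concentrated in even topological degrees with $p=2q$, so $\pi_{\mathrm{odd},*}KGL = 0$; hence the map $\Sigma^{1,1}ko \to KGL$, which factors through a spectrum with homotopy concentrated in the relevant range, is zero — more carefully, one uses that $ko$ is connective and cellular and that $[\Sigma^{1,1}ko, KGL]$ vanishes by a \lefttext{cellular induction} or by Lemma \ref{lem:pos-neg} applied to $\Sigma^{1,1}ko$ against $KGL$ after splitting off the relevant Postnikov piece). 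From the cofiber sequence we then get a map $f : C \to KGL$; since $C$ is connective (it is built from connective cells) and cellular, $f$ factors through the connective cover, giving $\bar f : C \to kgl$ by the universal property in Corollary \ref{cor:ConCov}.

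Next I would show $\bar f$ is a weak equivalence. Since both $C$ and $kgl$ are cellular — $kgl$ by construction, and $C$ as a cofiber of cellular spectra — by Proposition \ref{prop:pi-cellular} it suffices to check that $\bar f$ induces an isomorphism on all $\pi_{a,b}$. The cleanest route is to compute $\pi_{*,*} C$ directly from the long exact sequence
\[
\cdots \to \pi_{a-1,b-1} ko \xrightarrow{\eta} \pi_{a,b} ko \to \pi_{a,b} C \to \pi_{a-2,b-1} ko \xrightarrow{\eta} \pi_{a-1,b} ko \to \cdots,
\]
using the explicit description of $\pi_{*,*} ko = \Z_2[\tau, h_1, a, b]/(2h_1, \tau h_1^3, h_1 a, a^2 = 4b)$ from Theorem \ref{thm:main2}, where $h_1$ is the class of $\eta$. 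Here multiplication by $\eta = h_1$ is completely transparent: $h_1$ kills $a$ and $b$, so on the $\Z_2[\tau]$-summand generated by $a$ and by the polynomial generators $b^k$ it is zero, while on the $\Z_2[\tau]\{h_1^j\}$ part it is the shift $h_1^j \mapsto h_1^{j+1}$ (with the relations $2h_1 = 0$, $\tau h_1^3 = 0$ in play). A short bookkeeping argument then identifies $\pi_{*,*} C$ with $\Z_2[\tau, \beta]$, $\beta$ in bidegree $(2,1)$ — matching $\pi_{*,*} kgl$ from Proposition \ref{prop:pi-kgl} — and shows $\bar f$ realizes this isomorphism (one pins down that $\bar f$ sends the generator in bidegree $(2,1)$, coming from $a$... no: coming from the image of the unit $1 \in \pi_{0,0}ko$ together with the class $\beta^2$-type element — more precisely $\bar f$ is a ring map, so it suffices to track $1$ and the degree $(2,1)$ generator, and surjectivity plus a dimension count forces iso).

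The main obstacle I anticipate is not the homotopy-group bookkeeping but the construction and naturality of the map $C \to kgl$: one must be genuinely careful that the composite $\Sigma^{1,1} ko \xrightarrow{\eta} ko \to KGL$ is null, and that the resulting map on cofibers lands in (and is detected by) the connective cover in a way compatible with ring structures. The subtlety is that $\eta \cdot 1 = h_1 \in \pi_{1,1} ko$ is \emph{nonzero}, so the nullhomotopy genuinely uses that $h_1 \mapsto 0$ in $\pi_{*,*} KGL$ — this is exactly the motivic analogue of the classical fact that $\eta$ dies in $KU$, and it is where the hypothesis "we are over $\Spec\C$ and $2$-complete" enters via Theorem \ref{theo:KGL^{hC_{2}}}. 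An alternative, perhaps cleaner, strategy that sidesteps choosing an explicit nullhomotopy: build the equivalence at the level of $H\F_2$-cohomology using the machinery already in place (the cofiber sequence induces, after applying $H^{*,*}(-)$, a sequence relating $H^{*,*} ko$ and $H^{*,*} kgl$, and one shows the cohomology of $C$ is $\SA//\SE(1)$, then invokes a uniqueness statement), but since at this point in the paper $H^{*,*} ko$ has not yet been computed, the homotopy-group argument above is the appropriate one, and I would present that.
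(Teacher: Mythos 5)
Your overall architecture matches the paper's: build a map $C \map kgl$ out of the cofiber, then check it is a $\pi_{*,*}$-isomorphism and invoke the cellular Whitehead theorem (Proposition \ref{prop:pi-cellular}). However, two key steps are gapped. First, your nullhomotopy of $\Sigma^{1,1} ko \xrightarrow{\eta} ko \map KGL$ does not work as stated: you appeal to a vanishing of $[\Sigma^{1,1} ko, KGL]$ via Lemma \ref{lem:pos-neg} or a connectivity/Postnikov argument, but that lemma requires the target to have \emph{no} homotopy in the positive range, whereas $\pi_{*,*}KGL = \Z_2[\tau,\beta^{\pm 1}]$ is highly nontrivial there (e.g.\ $\Sigma^{1,1}ko$ has a cell in bidegree $(8,4)$, where $\pi_{8,4}KGL = \Z_2$), so there is no cheap vanishing of this mapping group; and the fact that $\eta$ acts as zero on $\pi_{*,*}KGL$ only controls the map on homotopy groups, not its homotopy class. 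The paper's fix is a ring-structure argument: writing the composite as the multiplication $kgl \Smash kgl \map kgl$ applied to $f \Smash (f\circ\eta)$, where $f \circ \eta \in \pi_{1,1}kgl = 0$, so the whole composite is null. Your idea of factoring a map $C \map KGL$ through the connective cover $kgl$ is fine once the nullhomotopy is in hand, but the nullhomotopy itself needs the multiplicative argument.

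Second, in the homotopy-group comparison you assert that ``surjectivity plus a dimension count forces iso,'' but surjectivity of $\pi_{2,1} C \map \pi_{2,1} kgl$ is precisely the nontrivial point, and your discussion of which class generates $\pi_{2,1}C$ is visibly unresolved. From the long exact sequence, $\pi_{2,1} C$ is the kernel of $\eta\colon \pi_{0,0}ko \map \pi_{1,1}ko$, i.e.\ $2\Z_2$, and the image of its generator in $\pi_{2,1}kgl$ is by construction an element of the Toda bracket $\langle i, \eta, 2\rangle$. One must show this bracket contains the generator $\beta$ rather than a multiple of it; the paper resolves exactly this ambiguity by topological realization (comparing with the classical fact for $ku$). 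Without that input, your argument only shows $\pi_{2,1}C \cong \Z_2 \cong \pi_{2,1}kgl$ abstractly, not that the comparison map is an isomorphism. (Also, $\bar f$ is not known to be a ring map, since $C$ is not given a ring structure, so that cannot be used to propagate the isomorphism.)
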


\begin{proof}
Let $C$ be the cofiber of $\Sigma^{1,1} ko \map ko$.  
There is a map $f:ko \map kgl$ because of formal properties of
homotopy fixed points spectra.  Consider the diagram
\[
\xymatrix{
ko \Smash S^{1,1} \ar[dr]\ar[r]^{\id \Smash \eta} 
  & ko \Smash ko \ar[r]^{f\Smash f}\ar[d] & kgl \Smash kgl \ar[d] \\
& ko \ar[r]_f & kgl,
}
\]
where the vertical maps are the multiplication maps of the motivic
ring spectra $ko$ and $kgl$.  
The composition across the top is
the smash product of $f$ with a map $S^{1,1} \map kgl$,
which must be zero since $\pi_{1,1} kgl$ is zero.
This shows that the composition
\[
\Sigma^{1,1} ko \map ko \map kgl
\]
is zero, so $f$ extends to a map $C \map kgl$.

Having constructed the map $C \map kgl$, it remains to show that
it is an isomorphism on homotopy groups.  Then we can finish
with Proposition \ref{prop:pi-cellular}.

Use the computation of $\pi_{*,*} ko$ and the long exact sequence
in homotopy groups to compute $\pi_{*,*} C$.  Analysis of the long
exact sequence leaves one ambiguity, which can be resolved by
showing that the element $\beta$ of $\pi_{2,1} kgl$
is contained in the Toda bracket $\langle i, \eta, 2\rangle$,
where $i: S^{0,0} \map kgl$ is the unit map.  This computation can be
made by comparing to the classical computation via topological realization.
\end{proof}

\begin{remark}
The same argument shows that $KGL$ is the cofiber of
$\eta: KO \map KO$.
\end{remark}

Having computed the cohomology of $kgl$, we will now exploit the
cofiber sequence
\[
\xymatrix{
\Sigma^{1,1} ko \ar[r]^\eta & ko \ar[r]^p & kgl \ar[r]^\delta &
\Sigma^{2,1} ko }
\]
in order to compute the cohomology of $ko$.  Beware that 
$p$ and $\delta$ are different than (but analogous to) the 
maps of the same name in Sections \ref{subsctn:HZ-cohlgy}
and \ref{subsctn:kgl-cohlgy}.

Because of Theorem \ref{thm:kgl-cohlgy}, we shall freely substitute
$\SA//\SE(1)$ for $H^{*,*} kgl$.

\begin{lemma}
\label{lem:ko-delta-p}
The composition $\delta^* p^*: \SA//\SE(1) \map \Sigma^{-1} \SA//\SE(1)$
is equal to right multiplication by $\Sq{2}$.
\end{lemma}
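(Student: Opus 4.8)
The plan is to follow the template of Lemmas \ref{lem:HZ-delta-p} and \ref{lem:kgl-delta-p}. Since $p^*$ and $\delta^*$ are each maps of $\SA$-modules, so is the composite $\delta^* p^*$, and since $\SA//\SE(1)$ is generated as an $\SA$-module by the unit class in bidegree $(0,0)$, the composite is determined by the single element $\delta^* p^*(1)$. The cofiber sequence $\Sigma^{1,1} ko \xrightarrow{\eta} ko \xrightarrow{p} kgl \xrightarrow{\delta} \Sigma^{2,1} ko$ shows that $\delta^* p^*$ raises bidegree by $(2,1)$, so $\delta^* p^*(1)$ lies in $(\SA//\SE(1))^{2,1}$, and it suffices to identify this element.

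A routine computation with low-dimensional Adem relations shows that $(\SA//\SE(1))^{2,1}$ is one-dimensional over $\F_2$, spanned by the class of $\Sq{2}$: the group $\SA^{2,1}$ is spanned by $\Sq{2}$, while the augmentation ideal of $\SE(1)$ has, in topological degree at most $2$, only $\M_2$-multiples of $\SQ{0} = \Sq{1}$, and these contribute nothing in bidegree $(2,1)$ because $\Sq{1}\Sq{1} = 0$. Consequently $\delta^* p^*(1)$ is either $0$ or the class of $\Sq{2}$, and it remains only to rule out the first possibility.

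To do this I would argue by topological realization, exactly as in the proof of Lemma \ref{lem:kgl-delta-p}. The realization of the cofiber sequence above is the classical cofiber sequence $\Sigma ko \xrightarrow{\eta} ko \xrightarrow{p'} ku \xrightarrow{\delta'} \Sigma^2 ko$, since $\AAA^2-0 \map \p^1$ realizes to the Hopf map and the realization of $kgl$ is classical connective complex $K$-theory. Classically one has $\delta'^* p'^*(1) = \Sq{2}$; this follows from the standard identifications $H^* ku = A//E(1)$ and $H^* ko = A//A(1)$ together with the classical form of the third short exact sequence of Lemma \ref{lem:subalg-exact}, which forces the connecting homomorphism $\delta'^*$ to be injective and hence to send the fundamental class to $\Sq{2}$ (compare also the arguments of \cite[p.\ 366]{Adams}). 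Since topological realization is compatible with the Steenrod operations and with the relevant cofiber sequences, it follows that the motivic $\delta^* p^*(1)$ is the class of $\Sq{2}$.

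The step requiring the most care is this last one: one must know that the topological realization of the motivic spectrum $kgl$ is $ku$, so that the realized cofiber sequence is the familiar one, and this in turn rests on realization commuting with the connective cover and homotopy fixed point constructions used to build $kgl$ and $ko$. If one preferred to avoid appealing to this, an internal alternative is to detect the non-triviality of $\delta^* p^*(1)$ by mapping into $kgl$ the motivic cofiber $C\eta$ of $\eta : S^{1,1} \to S^{0,0}$ --- on which $\Sq{2}$ connects the two cells --- in a way compatible with the unit map $S^{0,0} \to ko$, thereby reducing the claim to the familiar non-triviality of $\Sq{2}$ on $H^{*,*}(C\eta)$.
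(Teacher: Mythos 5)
Your reduction of the problem to a single element is correct and is worth keeping: $\delta^* p^*$ is an $\SA$-module map raising bidegree by $(2,1)$, and $(\SA//\SE(1))^{2,1}$ is indeed one-dimensional over $\F_2$ with basis the class of $\Sq{2}$ (the only admissible monomial in topological degree $2$ is $\Sq{2}$, and the left ideal generated by the augmentation ideal of $\SE(1)$ contributes nothing in bidegree $(2,1)$ because $\Sq{1,1}=0$ and $\SQ{1}$ has degree $(3,1)$). So the lemma comes down to showing $\delta^* p^*(1) \neq 0$, a point the paper does not isolate so explicitly.

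Your primary route to that nonvanishing, topological realization, has a genuine gap exactly where you suspect it. Realization is only known to preserve homotopy colimits, whereas $ko$ is built as the connective cover of the homotopy fixed point spectrum $KGL^{h\Z/2}$; neither the identification of the realization of this homotopy limit construction with classical $ko$, nor the identification of the realized cofiber sequence with the classical Wood sequence $\Sigma ko \map ko \map ku$, is established in the paper or is routine. This is presumably why the paper, having permitted itself a realization argument in Lemma \ref{lem:kgl-delta-p}, switches to an internal argument here --- and that internal argument is precisely your closing alternative. One maps the cofiber sequence $S^{0,0} \map C\eta \map S^{2,1}$ into $ko \map kgl \map \Sigma^{2,1} ko$ via the unit, identifies $C\eta$ with the suspension spectrum of $\p^2$ so that $\Sq{2} x = y$ in $H^{*,*}(C\eta)$ by the cup product structure, and chases the diagram: $\delta^* p^*(1)$ restricts to $\delta'^* p'^*(x) = y \neq 0$. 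Combined with your one-dimensionality observation this completes the proof; you should promote that alternative to the main argument and discard the realization step.
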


\begin{proof}
Since $\delta^* p^*$ is an $\SA$-module map,
it suffices to show that $\delta^* p^* (1) = \Sq{2}$.

Consider the diagram
\[
\xymatrix{
S^{0,0} \ar[r]^{p'}\ar[d] & C\eta \ar[d]\ar[r]^{\delta'} & S^{2,1} \ar[d] \\
ko \ar[r]_p & kgl \ar[r]_\delta & \Sigma^{2,1} ko,  }
\]
where both rows are cofiber sequences induced by 
the map $\eta$.  Here $C\eta$ is the suspension spectrum of
$\p^2$.
Recall that $H^{*,*} C\eta$ is free over $\M_2$ on two generators
$x$ and $y$ of degrees $(0,0)$ and $(2,1)$, and $\Sq{2} x = y$ because
of the cup product structure on $H^{*,*} \p^2$.
It follows
that the map $\delta'^* p'^*:H^{*,*} C\eta \map H^{*,*} C\eta$
takes $x$ to $y$.

The elements $1$ and $\Sq{2}$ of $\SA$ map
to $x$ and $y$ respectively in $H^{*,*} C\eta$.  
A diagram chase now shows that $\delta^* p^*$ takes $1$ to $\Sq{2}$.  
\end{proof}

\begin{theorem}
\label{thm:ko-cohlgy}
The motivic $\F_2$-cohomology $H^{*,*} ko$ of $ko$
is equal to $\SA//\SA(1)$.
\end{theorem}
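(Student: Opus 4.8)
The plan is to mimic the proof of Theorem~\ref{thm:kgl-cohlgy} essentially verbatim, using the cofiber sequence
\[
\xymatrix{
\Sigma^{1,1} ko \ar[r]^\eta & ko \ar[r]^p & kgl \ar[r]^\delta &
\Sigma^{2,1} ko }
\]
together with the computation of $\delta^* p^*$ as right multiplication by $\Sq{2}$ from Lemma~\ref{lem:ko-delta-p}. First I would note that $\delta p$ is null-homotopic, so $p^*\delta^* = 0$, hence $p^*\delta^* p^* = 0$; by Lemma~\ref{lem:ko-delta-p} this says $p^*$ kills the left ideal of $\SA//\SE(1)$ generated by $\Sq{2}$. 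Therefore $p^*$ factors through a map $\overline{p}^*:\SA//\SA(1) \map H^{*,*} ko$. (Here one uses that $\SA(1)$ is generated by $\Sq{1}$ and $\Sq{2}$, and $\Sq{1}$ already acts as zero on $\SA//\SE(1)$ by the module structure, so the augmentation ideal of $\SA(1)$ maps into the left ideal generated by $\Sq{2}$.)

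Next I would assemble the commutative diagram
\[
\xymatrix{
H^{*,*} \Sigma ko \ar[r]^{\Sigma\eta^*} &
    H^{*,*} \Sigma^{1,1} ko\ar[d]_{\Sigma^{1,1}\overline{p}^*}\ar[r]^{\delta^*}&
    H^{*,*} kgl \ar[d]_{\cong}\ar[r]^{p^*} &
    H^{*,*} ko \ar[d]_{\overline{p}^*} \ar[r]^{\eta^*} &
    H^{*,*} \Sigma^{1,1} ko \\
0 \ar[r] & \Sigma^{2,1} \SA//\SA(1) \ar[r]^-{\cdot\Sq{2}} & \SA//\SE(1) \ar[r] &
  \SA// \SA(1) \ar[r] & 0,  }
\]
whose bottom row is the third short exact sequence of Lemma~\ref{lem:subalg-exact}, whose right square commutes by the definition of $\overline{p}^*$, and whose left square commutes by Lemma~\ref{lem:ko-delta-p} (beware of a shift: the target of $\delta^*p^*$ is $\Sigma^{-1}\SA//\SE(1)$, so after reindexing the square involves $\Sigma^{1,1}\overline{p}^*$ matching $\cdot\Sq{2}$ up to the bidegree bookkeeping). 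From the commuting right square $\overline{p}^*$ is automatically surjective. I would then prove $\overline{p}^*$ is an isomorphism by induction on topological degree $a$: the base case $a<0$ holds because $H^{a,b} ko$ vanishes there by Lemma~\ref{lem:pos-neg} (since $ko$ is connective, $\Sigma^{\infty}$ of a cofibrant positive cellular spectrum---or directly from Corollary~\ref{cor:ConCov} and the fact that $H\F_2$ is positive cellular) while $\SA//\SA(1)$ vanishes there by definition. For the inductive step, assuming $\overline{p}^*$ is an isomorphism in topological degrees $<n$, one gets that $\Sigma^{1,1}\overline{p}^*$ is an isomorphism in degrees $<n+1$, hence $\delta^*$ is injective in those degrees, hence $\Sigma\eta^*$ is zero there, hence $\eta^*$ is zero in degrees $<n+2$; this splits the top sequence in degrees $<n+1$ and forces $\overline{p}^*$ to be an isomorphism in degrees $<n+1$, completing the induction.

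The one genuinely new input compared to Theorem~\ref{thm:kgl-cohlgy} is verifying that $\SA//\SE(1)$ is the correct cohomology of $kgl$ (already done) and confirming that the bottom-row short exact sequence is really the one we want---i.e.\ that the cokernel of $\cdot\,\Sq{2}$ on $\SA//\SE(1)$ is $\SA//\SA(1)$ and that $\cdot\,\Sq{2}$ is injective with the stated shift. This is exactly Lemma~\ref{lem:subalg-exact}(3), so no extra work is needed. I expect the main obstacle, such as it is, to be purely bookkeeping: getting the bidegree shifts consistent between ``right multiplication by $\Sq{2}$'' (which lowers cohomological bidegree, i.e.\ is a map $\SA//\SE(1)\to\Sigma^{-1}\SA//\SE(1)$ in the convention of Lemma~\ref{lem:ko-delta-p}) and the short exact sequence of Lemma~\ref{lem:subalg-exact} (where $\cdot\,\Sq{2}$ is written as a map $\Sigma^{2,1}\SA//\SA(1)\to\SA//\SE(1)$), and making sure the induction is phrased on the right variable. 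Everything else is formal and follows the template already established twice in this section.
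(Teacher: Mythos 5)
Your proposal is correct and follows exactly the argument the paper gives: the paper's proof of Theorem \ref{thm:ko-cohlgy} simply displays the same diagram and says the induction is identical to that of Theorem \ref{thm:kgl-cohlgy}. The only quibble is a bookkeeping slip in your diagram, where the target of $\delta^*$ should be $H^{*,*}\Sigma^{2,1}ko$ with vertical map $\Sigma^{2,1}\overline{p}^*$ (not $\Sigma^{1,1}$), matching the cofiber sequence and the shift in Lemma \ref{lem:subalg-exact}(3) --- a point you yourself flag as the place where care is needed.
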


\begin{proof}
The proof is essentially identical to the proof
of Theorem \ref{thm:kgl-cohlgy}.  In the diagram,
\[
\xymatrix{
H^{*,*} \Sigma ko \ar[r]^{\Sigma\eta^*} & 
    H^{*,*} \Sigma^{2,1} ko\ar[d]_{\Sigma^{2,1}\overline{p}^*}\ar[r]^{\delta^*}&
    H^{*,*} kgl \ar[d]_{\cong}\ar[r]^{p^*} & 
    H^{*,*} ko \ar[d]_{\overline{p}^*} \ar[r]^{\eta^*} & 
    H^{*,*} \Sigma^{1,1} ko \\
0 \ar[r] & \Sigma^{2,1} \SA//\SA(1) \ar[r] & \SA//\SE(1) \ar[r] & 
  \SA// \SA(1) \ar[r] & 0  }
\]
one can prove by induction that $\overline{p}^*$ is an isomorphism.
\end{proof}

\section{Computations of $ko$-homology}
\label{sctn:Ext-compute}

The first goal of this section is to establish an Adams spectral
sequence for computing $ko$-homology.  We identify the $E_2$-term
of this spectral sequence in terms of $\Ext$ groups over $\SA(1)$.
The remainder of the section is dedicated to computing
these $\Ext$ groups over $\SA(1)$ for various $\SA(1)$-modules
of interest, i.e., $E_2$-terms of the spectral sequence.
In all of the cases that we study below,
all differentials are trivial for simple algebraic reasons.
The interested reader can reconstruct $ko$-homology groups from
our computations.

\begin{lemma}
\label{lem:ko-smash-DQ}
For $0 \leq m \leq n \leq \infty$, there is an isomorphism
\[
H^{*,*} \left( ko \Smash \frac{DQ_n}{DQ_m} \right)
\cong
H^{*,*} ko \otimes_{\M_2} H^{*,*} \left( \frac{DQ_n}{DQ_m} \right).
\]
\end{lemma}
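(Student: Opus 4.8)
The plan is to show that the motivic Atiyah--Hirzebruch-type obstruction to a Künneth isomorphism vanishes here, and the cleanest route is to reduce to a cell-by-cell computation using the explicit cellular structure of $DQ_n/DQ_m$ coming from the Proposition on $H^{*,*}(DQ_n)$. First I would recall that $H^{*,*}(DQ_n/DQ_m)$ is a free $\M_2$-module, concentrated in the bidegrees $(a,b)$ that are among the $(p,q)$ occurring in the basis $\{b^k, ab^k\}$; in particular it is free of finite rank in each bidegree, and all these bidegrees satisfy $a \geq 0$ and $a - b \geq 0$. Consequently $DQ_n/DQ_m$ is a positive cellular motivic spectrum (a finite such spectrum when $n < \infty$, a filtered colimit of finite ones when $n = \infty$): one builds it by attaching cells $S^{p,q}$ with $p \geq 0$, $p - q \geq 0$, one for each $\M_2$-generator. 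Since $ko$ has a $\M_2$-module structure on its cohomology and $H^{*,*}(S^{p,q} \Smash ko) = \Sigma^{p,q} H^{*,*} ko$, the claim for a single cell is immediate.

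The key step is then to propagate the isomorphism across the cell-attaching cofiber sequences. So I would argue by induction on the number of cells: if $Z$ is obtained from $Z'$ by a cofiber sequence $S^{p,q} \map Z' \map Z$ with $p \geq 0$, $p-q \geq 0$, and the Künneth map is an isomorphism for $Z'$ and for $S^{p,q}$, then smashing with $ko$ and taking $H^{*,*}$ gives a long exact sequence that maps to the long exact sequence obtained by tensoring the cofiber sequence's cohomology long exact sequence with $H^{*,*} ko$ over $\M_2$ --- the latter stays exact because $H^{*,*}(S^{p,q})$, $H^{*,*} Z'$, $H^{*,*} Z$ are free $\M_2$-modules, so $-\otimes_{\M_2} H^{*,*} ko$ is exact on that sequence. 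The five lemma then gives the isomorphism for $Z$. For $n = \infty$ one passes to the colimit, using that $H^{*,*}$ and $\otimes_{\M_2}$ both commute with the relevant filtered colimits (the smash products with $ko$ are filtered colimits of the finite ones, and cohomology of a filtered colimit of spectra built from finitely many cells is the limit, but in each fixed bidegree everything stabilizes after finitely many stages so no $\lim^1$ issue arises).

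I would be slightly careful about one point that is the main obstacle: one must be sure that $H^{*,*} ko$ really is \emph{flat} (here: free, hence flat) as an $\M_2$-module, so that $-\otimes_{\M_2} H^{*,*} ko$ preserves exactness of the cohomology long exact sequences. This follows from Theorem~\ref{thm:ko-cohlgy}: $H^{*,*} ko = \SA//\SA(1)$, and $\SA$ is free as an $\M_2$-module (it has an $\M_2$-basis of admissible monomials), hence so is the quotient $\SA//\SA(1)$ by a sub-$\M_2$-module that is itself a free $\M_2$-module summand. Alternatively, one can bypass flatness entirely by invoking the general fact that for a motivic spectrum $E$ whose cohomology is free over $\M_2$, the cohomology of $E \Smash W$ for $W$ a cellular spectrum of the above type is $H^{*,*} E \otimes_{\M_2} H^{*,*} W$ --- this is the motivic Künneth theorem for cellular spectra, and the argument is the inductive one just described. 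The remaining steps (identifying the cell structure of $DQ_n/DQ_m$, checking the bidegree constraints $a \geq 0$, $a-b\geq 0$) are routine bookkeeping with the formulas in the Proposition on $H^{*,*}(DQ_n)$ and do not require separate justification beyond noting that $b$ has bidegree $(2,1)$ and $a$ has bidegree $(1,1)$, both of which satisfy $a \geq 0$ and $a - b \geq 0$, and so do all their products $b^k$, $ab^k$.
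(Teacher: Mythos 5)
Your proposal is correct in substance but takes a more hands-on route than the paper. The paper simply cites the K\"unneth theorem for finite cellular motivic spectra \cite[Theorem 8.6]{DDDI2} for the case $n<\infty$, notes that the higher $\Tor^{\M_2}$ terms vanish because $H^{*,*} ko = \SA//\SA(1)$ is free over $\M_2$ by Theorem \ref{thm:ko-cohlgy}, and then handles $n=\infty$ by the same colimit argument you describe (the $\lim^1$ terms vanish because in each fixed bidegree the towers are eventually constant). Your cell-by-cell induction with the five lemma is essentially a proof of that K\"unneth theorem in the case at hand, so what you gain is self-containedness and what you lose is brevity; the freeness of $H^{*,*} ko$ over $\M_2$ plays exactly the same role in both arguments (for you it makes the tensored long exact sequences exact, for the paper it kills the $\Tor$ obstruction), and your identification of why it is free (as a quotient of $\SA$, which is free on admissible monomials) is the right justification.

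One step needs repair: you cannot deduce a cell structure on $DQ_n/DQ_m$ from the computation of its cohomology. Knowing that $H^{*,*}(DQ_n/DQ_m)$ is free over $\M_2$ on generators in bidegrees $(a,b)$ with $a\geq 0$ and $a-b\geq 0$ does not by itself imply that the spectrum is obtained by attaching one cell $S^{p,q}$ per generator; the logical direction in the sources is the reverse. The (stable) cell structure of the deleted quadrics is a geometric fact coming from the stratification of $DQ_n$, established in \cite{DDDI} and \cite{DDDI2}, and the cohomology computation quoted in the Proposition is derived from it. So you should either quote the cellularity of $DQ_n$ from those references as the input to your induction, or bypass the explicit cells entirely by citing the K\"unneth theorem of \cite{DDDI2} as the paper does. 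With that input supplied, the rest of your argument goes through.
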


\begin{proof}
By a K\"unneth theorem for motivic cohomology \cite[Theorem 8.6]{DDDI2},
the lemma holds for $n < \infty$ because $DQ_n / DQ_m$ is a finite complex.
Note that the higher $\Tor$ terms
\[
\Tor^{\M_2} \left( H^{*,*} ko, H^{*,*} \left( \frac{DQ_n}{DQ_m} \right) \right)
\]
vanish because $H^{*,*} ko$ is free over $\M_2$ by Theorem \ref{thm:ko-cohlgy}.

It remains to consider the case $n = \infty$.  Recall that
$DQ_\infty / DQ_m$ is equal to $\colim_k DQ_k / DQ_m$ and that
$ko \Smash \frac{DQ_\infty}{DQ_m}$ is equal to
$\colim_k ko \Smash \frac{DQ_k}{DQ_m}$.
Observe that both
$\lim_k^1 H^{*,*} \left( \frac{DQ_k}{DQ_m} \right)$ and
$\lim_k^1 H^{*,*} \left( ko \Smash \frac{DQ_k}{DQ_m} \right)$
vanish.  This follows from the fact that for fixed $p$ and $q$, the groups 
$H^{p,q} \left( \frac{DQ_k}{DQ_m} \right)$ and
$H^{p,q} \left( ko \Smash \frac{DQ_k}{DQ_m} \right)$ 
do not depend on $k$, as long as $k$ is sufficiently large.

Using $\lim^1$ short exact sequences and the first paragraph, we have the chain
\begin{eqnarray*}
H^{*,*} \left( ko \Smash \frac{DQ_\infty}{DQ_m} \right) & \cong &
\lim_k H^{*,*} \left( ko \Smash \frac{DQ_k}{DQ_m} \right) \\
& \cong & 
\lim_k H^{*,*} ko \otimes_{\M_2} H^{*,*} \left( \frac{DQ_k}{DQ_m} \right) \\
& \cong & 
H^{*,*} ko \otimes_{\M_2} \lim_k H^{*,*} \left( \frac{DQ_k}{DQ_m} \right) \\
& \cong & 
H^{*,*} ko \otimes_{\M_2} H^{*,*} \left( \frac{DQ_\infty}{DQ_m} \right)
\end{eqnarray*}
of isomorphisms.
\end{proof}

\begin{remark}
It is possible to prove that
$H^{*,*} (ko \Smash X)$ is isomorphic to
$H^{*,*} ko \otimes_{\M_2} H^{*,*} X$ for a larger class of $X$
than in Lemma \ref{lem:ko-smash-DQ}.  We have avoided this generality
for sake of simplicity.
\end{remark}

\begin{theorem}
\label{thm:ASS}
Let $0 \leq m \leq n \leq \infty$.
There is a spectral sequence
\[
\Ext_{\SA(1)}\left( H^{*,*} \left( \frac{DQ_n}{DQ_m} \right), \M_2 \right) 
  \Rightarrow ko_{*,*} X.
\]
\end{theorem}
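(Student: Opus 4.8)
The plan is to realize this spectral sequence as an instance of the motivic $H\F_2$-based Adams spectral sequence applied to the motivic spectrum $ko \Smash \frac{DQ_n}{DQ_m}$, and then to rewrite its $E_2$-term by a change-of-rings argument, exactly mimicking the classical passage from the $H\F_2$-Adams spectral sequence for $ko \Smash X$ to an $\Ext_{A(1)}$ computation. First I would recall, from the motivic Adams spectral sequence of \cite{DI3} (or \cite{HKO}), that for a suitable motivic spectrum $Y$ there is a spectral sequence with $E_2$-term $\Ext_{\SA}(H^{*,*} Y, \M_2)$ converging to $\pi_{*,*} Y$; since we are already working $H\F_2$-complete, no further completion is needed. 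Apply this with $Y = ko \Smash \frac{DQ_n}{DQ_m}$. By definition $ko_{*,*}\!\left(\frac{DQ_n}{DQ_m}\right) = \pi_{*,*}\!\left( ko \Smash \frac{DQ_n}{DQ_m} \right)$, so the abutment is already of the desired shape, and the work lies in identifying $E_2$.

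Next I would compute $H^{*,*}\!\left( ko \Smash \frac{DQ_n}{DQ_m} \right)$. By Lemma \ref{lem:ko-smash-DQ} this is $H^{*,*} ko \otimes_{\M_2} H^{*,*}\!\left( \frac{DQ_n}{DQ_m} \right)$, and by Theorem \ref{thm:ko-cohlgy} the first factor is $\SA//\SA(1)$. Hence the $E_2$-term is $\Ext_{\SA}\!\left( \SA//\SA(1) \otimes_{\M_2} H^{*,*}\!\left( \frac{DQ_n}{DQ_m}\right), \M_2 \right)$. Now I would invoke the motivic analogue of the classical change-of-rings isomorphism: because $\SA$ is free as a module over the sub-Hopf algebra $\SA(1)$ (a fact I would either cite or deduce from the structure of the motivic dual Steenrod algebra over $\Spec\C$), for any $\SA(1)$-module $M$ there is an isomorphism of $\SA$-modules $\SA//\SA(1) \otimes_{\M_2} M \cong \SA \otimes_{\SA(1)} M$, with the diagonal $\SA(1)$-action on the left-hand side, and consequently
\[
\Ext_{\SA}\!\left( \SA \otimes_{\SA(1)} M, \M_2 \right) \cong \Ext_{\SA(1)}(M, \M_2).
\]
Applying this with $M = H^{*,*}\!\left( \frac{DQ_n}{DQ_m} \right)$, whose $\SA(1)$-module structure is the one recorded in Proposition \ref{prop:DQinfty-A(1)-module}, yields precisely the claimed $E_2$-term.

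It then remains to address convergence. For $n < \infty$ the spectrum $ko \Smash \frac{DQ_n}{DQ_m}$ is bounded below and of finite type (its cohomology is finitely generated over $\M_2$ in each bidegree, by Lemma \ref{lem:ko-smash-DQ} together with Proposition \ref{prop:pi-kgl}-style connectivity of $ko$), so convergence of the motivic Adams spectral sequence follows from the standard results of \cite{DI3}. For $n = \infty$ one must be slightly more careful: writing $ko \Smash \frac{DQ_\infty}{DQ_m} = \colim_k ko \Smash \frac{DQ_k}{DQ_m}$, one checks, just as in the proof of Lemma \ref{lem:ko-smash-DQ} using that the relevant cohomology groups stabilize in $k$, that the colimit is still bounded below and of finite type, so convergence persists.

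I expect the main obstacle to be exactly this last point in the limiting case $n = \infty$, together with pinning down the motivic change-of-rings isomorphism — that is, verifying the freeness of $\SA$ over $\SA(1)$ and the compatibility of the $\Ext$ functors with the extra $\M_2 = \F_2[\tau]$ grading — since once these formal inputs are secured, the identification of $E_2$ and the convergence are routine.
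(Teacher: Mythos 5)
Your proposal is correct and follows essentially the same route as the paper: apply the motivic Adams spectral sequence of \cite[Lemma 7.13]{DI3} to $ko \Smash \frac{DQ_n}{DQ_m}$, identify its cohomology via Lemma \ref{lem:ko-smash-DQ} and Theorem \ref{thm:ko-cohlgy}, and conclude by a standard change of rings. The extra care you take with convergence for $n=\infty$ and with the freeness of $\SA$ over $\SA(1)$ is reasonable but goes beyond what the paper records.
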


\begin{proof}
By \cite[Lemma 7.13]{DI3}, we have an Adams spectral sequence
\[
\Ext_{\SA} \left( H^{*,*} \left( ko \Smash \frac{DQ_n}{DQ_m} \right), 
  H^{*,*} S^{0,0} \right) \Rightarrow ko_{*,*} X.
\]
By Lemma \ref{lem:ko-smash-DQ} and Theorem \ref{thm:ko-cohlgy},
$H^{*,*} \left( ko \Smash \frac{DQ_n}{DQ_m} \right)$ is isomorphic to
$\SA//\SA(1) \otimes_{\M_2} H^{*,*} \left( \frac{DQ_n}{DQ_m} \right)$.
A standard change of rings finishes the proof.
\end{proof}

\begin{remark}
Similarly, because of Theorems \ref{thm:HZ-cohlgy} and \ref{thm:kgl-cohlgy},
there are Adams spectral sequences
\[
\Ext_{\SE(0)}(H^{*,*} X, \M_2) \Rightarrow (H\Z_2)_{*,*} X
\]
and
\[
\Ext_{\SE(1)}(H^{*,*} X, \M_2) \Rightarrow kgl_{*,*} X.
\]
\end{remark}

The rest of this section is dedicated to computing
$\Ext$ groups over $\SA(1)$ for various $\SA(1)$-modules
of interest. 

\begin{definition}
\label{def:AdamsGrading} 
For $\SA(1)$-modules $M$ and $N$, an element in $\Ext_{\SA(1)}^{n}(M,N)$ of internal bidegree $(a,b)$ has Adams tridegree $(a-n,n,b)$ and Adams bidegree $(a-n,n)$.
\end{definition}

\begin{theorem} 
\label{theo:Ext(M2,M2)ring} 
The ring $\Ext^{*}_{\SA(1)}(\M_2,\M_2)$ is the $\M_2$-algebra 
given by the following generators and relations:
\begin{center}
\begin{tabular}{|l|l|}
\hline
generator & Adams tridegree \\
\hline
$h_0$ & $(0,1,0)$ \\
$h_1$ & $(1,1,1)$ \\
$\alpha$ & $(4,3,2)$ \\
$\beta$ & $(8,4,4)$ \\
\hline
\end{tabular}
\hspace{5em}
\begin{tabular}{|l|}
\hline
relations \\
\hline
$h_0 h_1 = 0$ \\
$\tau h_1^3 = 0$ \\
$h_1 \alpha = 0$ \\
$\alpha^2 = h_0^2 \beta$ \\
\hline
\end{tabular}
\end{center}
\end{theorem}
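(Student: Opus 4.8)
The plan is to compute $\Ext^{*}_{\SA(1)}(\M_2,\M_2)$ directly from an explicit free resolution of $\M_2$ over $\SA(1)$ (equivalently, from the cobar complex on $\SA(1)$), working out the answer by hand through a finite range of internal degrees and then invoking $\beta$-periodicity to obtain the rest. This is feasible because $\SA(1)$ is free of rank $8$ over $\M_2$ with completely explicit multiplication, as recorded in the lemma above and in Figure~\ref{fig:A(1)}, so computing syzygies is a finite linear-algebra problem in each bidegree. The essential difference from the classical computation over $A(1)$ is that weights and powers of $\tau$ must be tracked throughout; moreover, since $\M_2=\F_2[\tau]$ is not a field, the usual slogan ``$\Ext^{n}$ is the $\M_2$-linear dual of the $n$-th stage of a minimal resolution'' can fail, and the $\Ext$ groups must instead be extracted by computing the stable $\Hom$-modules $\underline{\Hom}_{\SA(1)}(\Omega^{n}\M_2,\M_2)$ as bigraded $\M_2$-modules.

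Concretely, I would take $P_0=\SA(1)$ and $P_1=\SA(1)\{g_{10},g_{11}\}$, with $g_{10}$ and $g_{11}$ mapping to $\Sq{1}$ and $\Sq{2}$, giving the classes $h_0$ and $h_1$ in the stated tridegrees. The relations $\Sq{1,1}=0$ and $\Sq{2,2}=\tau\Sq{1,2,1}$ then produce $h_0^{2}$ and $h_1^{2}$, and one reads off $h_0h_1=0$ because the relevant bidegree of the resolution is empty. Continuing through the third and fourth stages yields a new generator $\alpha$ in Adams tridegree $(4,3,2)$ and a new generator $\beta$ in Adams tridegree $(8,4,4)$, together with the relations $h_1\alpha=0$ and $\alpha^{2}=h_0^{2}\beta$; these steps are formally identical to the classical ones over $A(1)$, once decorated with the powers of $\tau$ forced by the weight grading. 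The point at which the computation genuinely departs from the classical one is the behaviour of $h_1$: the factor of $\tau$ in $\Sq{2,2}=\tau\Sq{1,2,1}$ (in place of the classical $\Sq{2,2}=\Sq{1,2,1}$) makes $h_1^{3}$ \emph{nonzero}, with only $\tau h_1^{3}=0$, so that there is an infinite $h_1$-tower $h_1,h_1^{2},h_1^{3},\dots$. This is exactly the algebraic shadow of the fact that $\eta^{k}\neq 0$ motivically for all $k$. One must also check that $\tau$-torsion syzygies do not contribute spurious $\Ext$ classes: for example the would-be class in Adams tridegree $(4,2,2)$ arising from the third defining relation $\Sq{1,2,1,2}=\Sq{2,1,2,1}$ of $\SA(1)$ is annihilated once the $\M_2$-module structure is taken into account, consistently with the claimed presentation.

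To pass from this finite computation to the full bigraded ring I would establish $\beta$-periodicity, namely that multiplication by $\beta$ induces an isomorphism $\Ext^{s,(a,b)}_{\SA(1)}(\M_2,\M_2)\to\Ext^{s+4,(a+8,b+4)}_{\SA(1)}(\M_2,\M_2)$ once $s$ (equivalently, the internal topological degree) is large enough, so that the ring is determined by its values through a bounded range together with free multiplication by $\beta$. The most economical route is to deduce this from the classical $\beta$-periodicity of $\Ext_{A(1)}(\F_2,\F_2)$ via the comparison with the classical computation, using that the only part of the motivic answer not visible classically is the $\tau$-torsion $h_1$-tower, which is itself manifestly $\beta$-periodic; alternatively one can prove the periodicity internally. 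A good consistency check on the whole calculation is that the resulting ring, after formally setting $h_0=2$, reproduces $\pi_{*,*}(ko)$ from Theorem~\ref{thm:main2} exactly, as it must since the corresponding Adams spectral sequence collapses.

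I expect the $\tau$-bookkeeping to be the main obstacle. Confirming that $\tau h_1^{3}=0$ while $h_1^{3}\neq 0$ is one subtle point; verifying that no further $\tau$-torsion or $\tau$-divisible classes appear beyond those encoded in the stated presentation is another; and the periodicity argument has to be carried out in the presence of this torsion. By contrast, the low-degree part of the resolution is routine once the weights are handled carefully, and the purely multiplicative relations $h_1\alpha=0$ and $\alpha^{2}=h_0^{2}\beta$ can be imported from the classical computation of $\Ext_{A(1)}(\F_2,\F_2)$.
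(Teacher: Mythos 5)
Your plan is correct, and it is essentially the second of the two routes the paper itself sketches: build an explicit free $\SA(1)$-resolution of $\M_2$ to obtain the additive structure, then pin down the products with explicit cocycles. The paper's preferred (``shortest'') proof is instead the motivic May spectral sequence, as in \cite{DI3} and \cite{DI-A(2)}; it is worth noting what that buys, since it automates precisely the two points you flag as delicate. For $\SA(1)$ the May $E_1$-term is $\M_2[h_0,h_1,h_{20}]$ with $d_1(h_{20})=h_0h_1$ and then $d_2(b_{20})=\tau h_1^3$ (the class $h_0^2h_2$ appearing in the full Steenrod algebra being absent here), so the weight and $\tau$ bookkeeping ride along for free and the infinite $\tau$-torsion $h_1$-tower appears immediately, with $\alpha=h_0b_{20}$ and $\beta=b_{20}^2$ and all four relations already visible at $E_\infty$; this differential is exactly the spectral-sequence incarnation of the $\tau$ in $\Sq{2,2}=\tau\Sq{1,2,1}$ that you correctly identify as the source of the phenomenon. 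Two comments on your version. First, your caution about minimal resolutions over $\M_2=\F_2[\tau]$ is warranted but not fatal: $\M_2$ is a graded PID and $\SA(1)$ is $\M_2$-free of rank $8$ (Figure~\ref{fig:A(1)}), so every syzygy module is again $\M_2$-free and minimal free resolutions exist; the only trace of the non-field base is that some $\Ext$ groups are $\M_2/\tau$ rather than $\M_2$, which is what the open circles in Figure~\ref{fig:AdamsM2} record. Second, the periodicity statement needs a more careful formulation: multiplication by $\beta$ is injective but is \emph{not} an isomorphism onto $\Ext^{s+4}$ merely for $s$ large, since the $h_0$-tower in stem $0$, the $h_0$-tower on $\alpha$ in stem $4$, and the $h_1$-tower along the line of slope $1$ all consist of $\beta$-indecomposables in arbitrarily high filtration. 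What one actually proves is Adams periodicity above a line of slope $1/2$ together with a vanishing region, or equivalently that the $\beta$-indecomposables are exhausted by those three families plus the generators listed in the theorem; either form still reduces the computation to a finite range, and your fallback of controlling the $\tau$-free part by $\Ext_{\SA(1)}(\M_2,\M_2)[\tau^{-1}]\cong\Ext_{A(1)}(\F_2,\F_2)\otimes_{\F_2}\F_2[\tau^{\pm 1}]$ and then handling the $\tau$-torsion separately is legitimate. With that adjustment your argument goes through.
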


\begin{proof}
The shortest proof is to use the motivic May spectral sequence, as in
\cite{DI3} and \cite{DI-A(2)}.

Alternatively, one can construct an explicit $\SA(1)$-resolution of $\M_2$
to compute the additive structure of the $\Ext$ groups, and then use
explicit cocycles to find Yoneda products.
\end{proof}

Figure \ref{fig:AdamsM2} is a pictorial representation of 
the previous theorem.  

\begin{remark}
Recall the calculation of $\pi_{*,*}(ko)$ from Theorem \ref{thm:main2}.
The calculation in Theorem \ref{theo:Ext(M2,M2)ring} is 
the associated graded of $\pi_{*,*} (ko)$, filtered by powers of $2$.
\end{remark}

In order to simplify notation, we write $\E$ for
$\Ext_{\SA(1)}(\M_2,\M_2)$.
We will describe $\Ext$ groups of various $\SA(1)$-modules
as modules over the ring
$\Ext_{\SA(1)}(\M_2,\M_2)$.

From now on, we will use \textit{reduced} motivic cohomology 
with coefficients in $\F_{2}$.  The point is that unreduced
cohomology splits as an $\SA(1)$-module into reduced cohomology
plus a copy of $\M_2$.  From the perspective of $\SA(1)$-module
theory, the extra copy of $\M_2$ clutters the calculations needlessly.

\begin{definition} 
Let $R$ be the $\SA(1)$-module generated by $x_{i}$ for $i\geq 0$,
where the degree of $x_i$ is $(4i-1,2i)$,
subject to the relations $\Sq{2,1,2} x_i = \Sq{1} x_{i+1}.$ 
\end{definition}
 
Figure \ref{fig:R} gives a pictorial representation of $R$, with
notation as in the previous figures.

\begin{figure}[htbp!]
\begin{center}
\psset{unit=0.5cm}
\begin{pspicture}(-2,-1)(2,7)
%\scriptsize

\pscircle(0,-1){0.3}
\pscircle(0,1){0.3}
\pscircle(0,2){0.3}
\pscircle(0,3){0.3}
\pscircle(0,4){0.3}
\pscircle(0,5){0.3}
\pscircle(0,6){0.3}

\psline(0,1)(0,2)
\psline(0,3)(0,4)
\psline(0,5)(0,6)

\psbezier(0,-1)(0.7,-0.3)(0.7,0.3)(0,1)
\psbezier(0,2)(-0.7,2.7)(-0.7,3.3)(0,4)
\psbezier(0,3)(0.7,3.7)(0.7,4.3)(0,5)

\rput(-2,-1){$x_0$}
\rput(-2,3){$x_1$}

\rput(2,-1){$(-1,0)$}
\rput(2,1){$(1,1)$}
\rput(2,2){$(2,1)$}
\rput(2,3){$(3,2)$}
\rput(2,4){$(4,2)$}
\rput(2,5){$(5,3)$}
\rput(2,6){$(6,3)$}

\rput(0,7){$\vdots$}

\end{pspicture}
\end{center}
\caption{The $\SA(1)$-module $R$}\label{fig:R}
\end{figure}
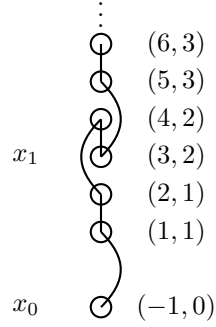

\begin{theorem}
As an $\E$-module,
$\Ext_{\SA(1)}^{*}(R,\M_2)$ is given by the following
generators and relations:
\begin{center}
\begin{tabular}{|l|l|}
\hline
generator & Adams tridegree \\
\hline
$r_{4i-1}$ for $i \geq 0$ & $(4i-1,0,2i)$ \\
\hline
\end{tabular}
\hspace{5em}
\begin{tabular}{|l|}
\hline
relations \\
\hline
$h_1 r_{4i-1} = 0$ for $i \geq 0$ \\
$\alpha r_{4i-1} = h_0^3 r_{4i+3}$ for $i \geq 0$ \\
$\beta r_{4i-1} = h_0^4 r_{4i+7}$ for $i \geq 0$ \\
\hline
\end{tabular}
\end{center}
\end{theorem}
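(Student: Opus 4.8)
The plan is to exploit the self-similarity of $R$. Let $R_{0}=\SA(1)\cdot x_{0}$ be the cyclic submodule generated by $x_{0}$; since $\Sq{1}x_{0}=0$ it has exactly the four $\M_{2}$-basis elements $x_{0}$, $\Sq{2}x_{0}$, $\Sq{1,2}x_{0}$ and $\Sq{2,1,2}x_{0}=\Sq{1}x_{1}$, in bidegrees $(-1,0),(1,1),(2,1),(4,2)$. The quotient $R/R_{0}$ is generated by the images of $x_{1},x_{2},\dots$, subject to $\Sq{1}\bar{x}_{1}=0$ (as $\Sq{2,1,2}x_{0}$ has been killed) and $\Sq{2,1,2}\bar{x}_{i}=\Sq{1}\bar{x}_{i+1}$, so it is canonically $\Sigma^{4,2}R$. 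This gives a short exact sequence
\[
0 \map R_{0} \map R \map \Sigma^{4,2}R \map 0
\]
of $\SA(1)$-modules. A short free resolution (or the identification of $R_{0}$ with $\Sigma^{-1,0}\SA(1)//\SE(0)$ followed by change of rings) shows that $\Ext^{*}_{\SA(1)}(R_{0},\M_{2})\cong\Sigma^{-1,0}\M_{2}[h_{0}]$: it is concentrated in stem $-1$ and generated over $\E$ by the class $\rho$ dual to $x_{0}$, in Adams tridegree $(-1,0,0)$. (One could instead start from the short exact sequence $0\map\widetilde{H}^{*,*}(DQ_{\infty})\map R\map\Sigma^{-1,0}\M_{2}\map 0$, which exhibits $R$ as $\widetilde{H}^{*,*}(DQ_{\infty})$ with a bottom cell attached along $\Sq{2}$, provided $\Ext_{\SA(1)}(\widetilde{H}^{*,*}(DQ_{\infty}),\M_{2})$ is already available; the argument below is parallel.)

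Next I would run the long exact sequence of $\Ext_{\SA(1)}(-,\M_{2})$ attached to the first short exact sequence. The connecting homomorphism vanishes: the $\M_{2}$-linear functional on $R_{0}$ dual to $x_{0}$ is $\SA(1)$-linear (everything above $x_{0}$ lands in positive topological degree, where $\M_{2}=0$, and $\Sq{1}x_{0}=0$), and it extends to an $\SA(1)$-map $R\map\M_{2}$ by sending every $x_{i}$ with $i\geq 1$ to $0$; hence $\rho$ lifts along the restriction map, so the connecting map, being $\E$-linear, kills the $\E$-generator $\rho$ and vanishes. The long exact sequence therefore breaks into short exact sequences of $\E$-modules
\[
0 \map \Sigma^{4,2}E \map E \map \Sigma^{-1,0}\M_{2}[h_{0}] \map 0, \qquad E:=\Ext^{*}_{\SA(1)}(R,\M_{2}),
\]
and induction on the stem (using that $\Ext_{\SA(1)}(R_{0},\M_{2})$ is supported in stem $-1$) shows that $E$ is free over $\M_{2}[h_{0}]$ on classes $r_{4i-1}$, $i\geq 0$, with $r_{4i-1}$ in Adams tridegree $(4i-1,0,2i)$. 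For grading reasons $h_{1}r_{4i-1}$ would have stem $\equiv 0\pmod 4$, where $E$ vanishes, so $h_{1}r_{4i-1}=0$; and $\alpha r_{4i-1}$, $\beta r_{4i-1}$ land in tridegrees whose only occupants in $E$ are $h_{0}^{3}r_{4i+3}$ and $h_{0}^{4}r_{4i+7}$ respectively.

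It remains to decide whether these products are nonzero. Let $p^{*}\colon\Sigma^{4,2}E\map E$ be the injective $\E$-module map induced by the quotient $R\map\Sigma^{4,2}R$; by construction $p^{*}(\Sigma^{4,2}r_{4j-1})=r_{4j+3}$. Writing $\alpha r_{4i-1}=\epsilon_{i}\,h_{0}^{3}r_{4i+3}$ and $\beta r_{4i-1}=\delta_{i}\,h_{0}^{4}r_{4i+7}$ with $\epsilon_{i},\delta_{i}\in\{0,1\}$, applying $p^{*}$ to $\Sigma^{4,2}(\alpha r_{4i-1})$ and using $\E$-linearity gives $\alpha r_{4i+3}=\epsilon_{i}h_{0}^{3}r_{4i+7}$, whence $\epsilon_{i}=\epsilon_{i+1}$; likewise $\delta_{i}=\delta_{i+1}$, and the relation $\alpha^{2}=h_{0}^{2}\beta$ in $\E$ forces $\delta=\epsilon^{2}=\epsilon$. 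So the whole theorem comes down to showing $\alpha r_{-1}\neq 0$. I would deduce this by topological realization: under realization $\alpha$, $h_{0}$, $r_{-1}$ and $r_{3}$ go to their namesakes in the classical $\Ext_{A(1)}$-computation for the corresponding classical $A(1)$-module, in which the analogous $\alpha$-multiplication is nonzero, so $\alpha r_{-1}$ has nonzero realization and hence is itself nonzero. Alternatively one avoids realization by computing $\Ext_{\SA(1)}$ of the eight-cell submodule $\SA(1)\cdot\langle x_{0},x_{1}\rangle$ of $R$ from an explicit finite free resolution and appealing to naturality.

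The one genuinely non-formal step is this last one. The freeness over $\M_{2}[h_{0}]$ and the vanishing of the connecting map are bookkeeping, and the $h_{1}$-relation together with the dichotomy "$\alpha r_{4i-1}$ equals $0$ or $h_{0}^{3}r_{4i+3}$" is forced by the grading; but the nontriviality of the $\alpha$- and $\beta$-multiplications is invisible in the associated graded $\M_{2}[h_{0}]$-module and must be pinned down either by comparison with the classical $A(1)$-calculation or by an explicit cocycle computation.
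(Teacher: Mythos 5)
Your argument is correct, but it takes a genuinely different route from the paper's. The paper's proof is a direct one: write down an explicit free $\SA(1)$-resolution of $R$, read off the additive answer, and determine the $\E$-module structure by explicit cocycle computations. You instead exploit the self-similarity $R/R_0 \cong \Sigma^{4,2}R$ with $R_0 = \SA(1)x_0 \cong \Sigma^{-1,0}\SA(1)//\SE(0)$, compute $\Ext_{\SA(1)}(R_0,\M_2)\cong \Sigma^{-1,0}\M_2[h_0]$ by change of rings, and splice. This buys a real economy: the connecting homomorphism vanishes by your lifting argument, the additive structure then follows from the resulting short exact sequences together with the vanishing of $\Ext_{\SA(1)}(R,\M_2)$ in stems below $-1$ (note that it is this connectivity of $R$, rather than the support of $\Ext_{\SA(1)}(R_0,\M_2)$, that actually starts your induction), and the periodicity map $p^*$ combined with $\alpha^2 = h_0^2\beta$ reduces every hidden $\alpha$- and $\beta$-multiplication to the single product $\alpha r_{-1}$. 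You correctly isolate that product as the one non-formal input, and you leave it at essentially the same level of rigor as the paper leaves all of its products. One caution on your preferred way of finishing: topological realization is not literally available for an abstract $\SA(1)$-module; the correct comparison is algebraic inversion of $\tau$, which identifies $\Ext_{\SA(1)}(R,\M_2)[\tau^{-1}]$ with the classical $\Ext_{A(1)}$ of the module obtained by setting $\tau=1$, tensored with $\F_2[\tau^{\pm 1}]$. That comparison does suffice here, since your $E$ is free over $\M_2[h_0]$ and hence $\tau$-torsion-free, but you still need the classical calculation (or your explicit resolution of the eight-dimensional submodule $\SA(1)\langle x_0,x_1\rangle$) as external input.
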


\begin{proof}
It is straightforward to write down a free $\SA(1)$-resolution of $R$.
The $\E$-module structure comes from explicit computations with
cocycles in low dimensions.
\end{proof}

Figure \ref{fig:Ext(R,M2)} is a pictorial representation
of $\Ext_{\SA(1)}^{*}(R,\M_2)$.

\begin{theorem}
\label{theo:DQinf} 
As an $\E$-module, 
$\Ext_{\SA(1)}^{*}(\widetilde H^{*,*}(DQ_{\infty}),\M_2)$ is given
by the following generators and relations:
\begin{center}
\begin{tabular}{|l|l|}
\hline
generator & Adams tridegree \\
\hline
$k$ & $(1,0,1)$ \\
$r_{4i-1}$ for $i \geq 1$ & $(4i-1,0,2i)$ \\
\hline
\end{tabular}
\hspace{5em}
\begin{tabular}{|l|}
\hline
relations \\
\hline
$h_0 k = 0$ \\
$\alpha k = 0$ \\
$h_0^2 r_3 = \tau h_1^2 k$ \\
$h_0^4 r_7 = 0$ \\
$\alpha r_{4i-1} = h_0^3 r_{4i+3}$ for $i \geq 1$ \\
$\beta r_{4i-1} = h_0^4 r_{4i+7}$ for $i \geq 1$ \\
\hline
\end{tabular}
\end{center}
\end{theorem}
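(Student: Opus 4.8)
The plan is to exploit the short exact sequence of $\SA(1)$-modules relating $\widetilde{H}^{*,*}(DQ_\infty)$ to the module $R$ from the previous theorem, and then to run the induced long exact sequence in $\Ext_{\SA(1)}(-,\M_2)$. First I would produce the right short exact sequence. Comparing Figure \ref{fig:DQ_inf} with Figure \ref{fig:R}, one sees that $\widetilde{H}^{*,*}(DQ_\infty)$ and $R$ agree above degree $2$, and differ only at the bottom: $\widetilde{H}^{*,*}(DQ_\infty)$ has classes $a$ in $(1,1)$ and $b$ in $(2,1)$ with $\Sq{1}a=b$, $\Sq{2}a=0$, whereas $R$ has a class $x_0$ in $(-1,0)$ with $\Sq{2}x_0$ a class in $(1,1)$. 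Concretely I expect a short exact sequence of the form
\[
0 \map \Sigma^{2,1}\M_2 \map \widetilde{H}^{*,*}(DQ_\infty) \map \Sigma^{2,1} R \map 0,
\]
where the submodule is spanned by the $\Sq{1}$-image $b$ of $a$ (a trivial $\SA(1)$-module $\M_2$ concentrated in bidegree $(2,1)$), and the quotient identifies $a \mapsto \Sigma^{2,1}x_0$ — one checks that the relations $\Sq{2,1,2}x_i = \Sq{1}x_{i+1}$ of $R$ match the relations in $\widetilde{H}^{*,*}(DQ_\infty)$ after this shift, using Proposition \ref{prop:DQinfty-A(1)-module}. (One should double-check the precise shift and which of $\M_2$, $R$ is sub versus quotient; this is the kind of low-degree bookkeeping best settled against the figures.)

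Next I would apply $\Ext_{\SA(1)}(-,\M_2)$ to this short exact sequence to get a long exact sequence of $\E$-modules
\[
\cdots \map \Ext_{\SA(1)}^{n}(\Sigma^{2,1}R,\M_2) \map \Ext_{\SA(1)}^{n}(\widetilde{H}^{*,*}(DQ_\infty),\M_2) \map \Ext_{\SA(1)}^{n}(\Sigma^{2,1}\M_2,\M_2) \xrightarrow{\ \partial\ } \Ext_{\SA(1)}^{n+1}(\Sigma^{2,1}R,\M_2) \map \cdots.
\]
Here $\Ext_{\SA(1)}^{*}(\Sigma^{2,1}\M_2,\M_2)$ is just a shifted copy of $\E$ (Theorem \ref{theo:Ext(M2,M2)ring}), and $\Ext_{\SA(1)}^{*}(\Sigma^{2,1}R,\M_2)$ is a shifted copy of the module $\Ext_{\SA(1)}^{*}(R,\M_2)$ computed in the preceding theorem. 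The class $k$ in Adams tridegree $(1,0,1)$ should be the image of the generator $1$ of $\Ext^0$ of the shifted $\M_2$ that survives, or rather the class dual to $b$; the classes $r_{4i-1}$ for $i\geq 1$ come from the $r_{4i-1}$ of the $R$-calculation. The generator $r_{-1}$ of the $R$-calculation does not survive — it is killed by (or maps to something under) the connecting homomorphism, which is exactly what produces the new relations $h_0 k=0$, $\alpha k=0$, $h_0^2 r_3 = \tau h_1^2 k$, and $h_0^4 r_7=0$ that are absent from (or modified relative to) the $\Ext(R,\M_2)$ picture. So the work is: (i) identify $\partial$ explicitly in low degrees — it is an $\E$-module map from a shifted $\E$ into a shifted $\Ext(R,\M_2)$, hence determined by where the top class goes, and one computes this from the attaching data $\Sq{1}a=b$; (ii) read off kernel and cokernel to get generators and relations.

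The main obstacle I anticipate is pinning down the connecting homomorphism $\partial$ precisely enough to get all four "new" relations, and in particular the relation $h_0^2 r_3 = \tau h_1^2 k$ with its $\tau$: this is the place where the motivic phenomenon (the relation $\Sq{2,2}=\tau\Sq{1,2,1}$ in $\SA(1)$, and the nonvanishing of $\eta^k$) shows up, and it cannot be guessed from the classical analogue. I would settle it either by building an explicit small free $\SA(1)$-resolution of $\widetilde{H}^{*,*}(DQ_\infty)$ through the relevant range and reading off the $\E$-action on cocycles directly — bypassing the long exact sequence entirely for the delicate part — or by tracking the May spectral sequence differential that detects this $\tau$, as in \cite{DI3}. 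As a consistency check, topological realization sends $DQ_\infty$ to (a model for) the relevant classical space, and the answer here must reduce mod $\tau$ to the known classical computation of $\Ext_{A(1)}$ of the corresponding classical cohomology module; this, together with the explicit resolution, should be enough to nail down every coefficient.
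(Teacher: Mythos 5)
Your overall strategy is exactly the paper's: compare $\widetilde H^{*,*}(DQ_\infty)$ with $R$ via a short exact sequence whose third term is a single shifted copy of $\M_2$, run the long exact sequence in $\Ext_{\SA(1)}(-,\M_2)$, determine the maps by $\E$-linearity plus low-degree computation, and settle the one remaining ambiguity (whether $\tau h_1^2 k$ is zero or equals $h_0^2 r_3$) by an explicit cocycle calculation in homological degree $2$. That last point is well identified: it is precisely the delicate motivic step, and the paper resolves it the same way.

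However, the specific short exact sequence you wrote down does not exist. The element $b$ does not span a trivial $\SA(1)$-submodule of $\widetilde H^{*,*}(DQ_\infty)$, because $\Sq{2}b=b^2\neq 0$ by Proposition \ref{prop:DQinfty-A(1)-module}; so there is no monomorphism $\Sigma^{2,1}\M_2\map\widetilde H^{*,*}(DQ_\infty)$ with image $\M_2\cdot b$, and no quotient map onto a shift of $R$ sending $a$ to a generator (indeed $R$'s generator $x_0$ satisfies $\Sq{1}x_0=0$ while $\Sq{1}a=b$, so $a$ cannot map to a shifted $x_0$ under an $\SA(1)$-map with kernel $\M_2\cdot b$). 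You flagged the uncertainty about "which is sub versus quotient," and that is where the error lies: the correct relationship is that $\widetilde H^{*,*}(DQ_\infty)$ is the submodule of $R$ consisting of the elements of positive topological degree (Steenrod operations raise topological degree, so this is closed under the action, and comparing Figures \ref{fig:DQ_inf} and \ref{fig:R} it is isomorphic to $\widetilde H^{*,*}(DQ_\infty)$ with $a,b,ab,b^2,\ldots$ matching the classes above $x_0$), giving
\[
0 \map \widetilde H^{*,*}(DQ_\infty) \map R \map \Sigma^{-1,0}\M_2 \map 0 ,
\]
with quotient spanned by $x_0$. With this corrected input the rest of your outline goes through as in the paper: the long exact sequence reads
$\cdots\map\Ext^n(\Sigma^{-1,0}\M_2)\map\Ext^n(R)\map\Ext^n(\widetilde H^{*,*}(DQ_\infty))\map\Ext^{n+1}(\Sigma^{-1,0}\M_2)\map\cdots$, the classes $r_{4i-1}$ for $i\geq 1$ are the images of the corresponding generators of $\Ext_{\SA(1)}^*(R,\M_2)$, the class $r_{-1}$ is what gets consumed by the map from the shifted $\E$, and $k$ is detected by the connecting homomorphism; the relations $h_0k=0$, $\alpha k=0$, $h_0^4r_7=0$ follow from $\E$-linearity of these maps, while $h_0^2r_3=\tau h_1^2k$ requires the cocycle computation you anticipated.
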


\begin{proof}
There is a short exact sequence
\[
\widetilde H^{*,*}(DQ_{\infty})\rightarrow R \map
\Sigma^{-1,0} \M_2,
\]
which yields a long exact sequence in $\Ext$ groups.
The maps of the long exact sequence are completely determined
by $\E$-linearity and direct calculation in homological degree $0$.
The long exact sequence 
tells us most of what we want to know, but it 
leaves one ambiguity in the $\E$-module structure of 
$\Ext_{\SA(1)}^{*}(\widetilde H^{*,*}(DQ_{\infty}),\M_2)$.
Namely, it is not immediately clear whether
$\tau h_1^2 k$ equals zero or $h_0^2 r_1$.
This ambiguity 
can be resolved by explicit calculations with cocycles of homological
degree $2$.
\end{proof}

Figure \ref{fig:ExtDQinf} is a pictorial representation of the
above result.

\begin{theorem}
\label{theo:n=0}
Let $n$ be a positive integer that is congruent to $0$ modulo $4$.
As an $\E$-module,
$\Ext_{\SA(1)}^{*}(\widetilde H^{*,*}(DQ_{n}),\M_2)$ 
is given by the following generators and relations:
\begin{center}
\begin{tabular}{|l|l|}
\hline
generator & Adams tridegree \\
\hline
$k$ & $(1,0,1)$ \\
$r_{4i-1}$ for $1 \leq i \leq \frac{n}{4}$ & $(4i-1,0,2i)$ \\
\hline
\end{tabular}
\hspace{1em}
\begin{tabular}{|l|}
\hline
relations \\
\hline
$h_0 k = 0$ \\
$\alpha k = 0$ \\
$h_0^2 r_3 = \tau h_1^2 k$ \\
$h_0^4 r_7 = 0$ \\
$h_1 r_{4i-1} = 0$ if $1 \leq i < \frac{n}{4}$ \\
$\alpha r_{4i-1} = h_0^3 r_{4i+3}$ if $1 \leq i < \frac{n}{4}$ \\
$\beta r_{4i-1} = h_0^4 r_{4i+7}$ if $1 \leq i < \frac{n}{4} - 1$ \\
$\beta r_{n-5} = h_0 \alpha r_{n-1}$ \\
\hline
\end{tabular}
\end{center}
\end{theorem}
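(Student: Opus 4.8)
The plan is to reduce to the already-known computation of $\Ext_{\SA(1)}^{*}(\widetilde H^{*,*}(DQ_{\infty}),\M_2)$ from Theorem \ref{theo:DQinf} by means of a short exact sequence of $\SA(1)$-modules, and then to run the resulting long exact sequence of $\Ext$ groups.

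First I would construct the short exact sequence. Since $n\equiv 0\pmod 4$, Proposition \ref{prop:DQinfty-A(1)-module} gives $\Sq 1 b^{n/2}=\Sq 2 b^{n/2}=0$; the vanishing of $\Sq 2 b^{n/2}$ is exactly the point where the congruence is used, and for $n\equiv 2\pmod 4$ one must argue differently. Hence multiplication by $b^{n/2}$ is a map of $\SA(1)$-modules $\Sigma^{n,n/2}\widetilde H^{*,*}(DQ_\infty)\to\widetilde H^{*,*}(DQ_\infty)$, and comparing with the formula for $H^{*,*}(DQ_n)$ identifies its cokernel with $\widetilde H^{*,*}(DQ_n)$. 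This produces
\[
0 \to \Sigma^{n,n/2}\widetilde H^{*,*}(DQ_\infty) \xrightarrow{\ \cdot b^{n/2}\ } \widetilde H^{*,*}(DQ_\infty) \to \widetilde H^{*,*}(DQ_n) \to 0,
\]
which is also the cofiber sequence $DQ_n\to DQ_\infty\to DQ_\infty/DQ_n$ on reduced cohomology. (A variant that may be cleaner for bookkeeping is to induct on $n$ in steps of $4$ using the cofiber sequences $DQ_{n-4}\to DQ_n\to DQ_n/DQ_{n-4}$, whose third term has cohomology a suspension of a fixed four-dimensional $\SA(1)$-module with easily computed $\Ext$; this gives the same answer.)

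Applying $\Ext_{\SA(1)}(-,\M_2)$ yields a long exact sequence of $\E$-modules. Because Theorem \ref{theo:DQinf} exhibits $\Ext_{\SA(1)}^{*}(\widetilde H^{*,*}(DQ_\infty),\M_2)$ with every generator in homological degree zero, the map $\iota^{*}$ induced by $\cdot b^{n/2}$ is $\E$-linear and is therefore determined by its restriction to $\Hom_{\SA(1)}$; a direct calculation on the indecomposables $a,ab,ab^{3},\dots$ gives $\iota^{*}(k)=0$, $\iota^{*}(r_{4i-1})=0$ for $1\le i\le n/4$, and $\iota^{*}(r_{4i-1})=\Sigma^{n,n/2}r_{4(i-n/4)-1}$ for $i>n/4$. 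From this, $\ker\iota^{*}$ is the sub-$\E$-module generated by $k$ and by $r_{4i-1}$ for $1\le i\le n/4$ (one must check that the relations of Theorem \ref{theo:DQinf} linking a surviving $r$ to a non-surviving one, such as $\alpha r_{n-1}=h_0^{3}r_{n+3}$, remain consistent, which works because consequences like $h_0^{3}r_3=h_0\cdot\tau h_1^{2}k=0$ and $h_0^{4}r_7=0$ already hold), while $\operatorname{coker}\iota^{*}$ is a quotient of $\Sigma^{n,n/2}\bigl(\E k/(h_0k,\alpha k,\tau h_1^{2}k)\bigr)$. The long exact sequence then presents $\Ext_{\SA(1)}^{*}(\widetilde H^{*,*}(DQ_n),\M_2)$ as an extension of $\ker\iota^{*}$ by a homological shift of $\operatorname{coker}\iota^{*}$, which already accounts for the truncated list of generators and for the truncated ranges in the relations $\alpha r_{4i-1}=h_0^3 r_{4i+3}$ and $\beta r_{4i-1}=h_0^4 r_{4i+7}$.

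The last step, and the one I expect to be hardest, is to pin down this $\E$-module extension, which is where the genuinely new relations come from. A count of internal bidegrees shows that the connecting homomorphism identifies the generator $\Sigma^{n,n/2}k$ of $\operatorname{coker}\iota^{*}$ with a class in the $h_1$-tower on $r_{n-1}$, reflecting the relations $h_1 r_{4i-1}=0$ for $i<n/4$ together with the survival of $h_1 r_{n-1}$; and the wrap-around relation $\beta r_{n-5}=h_0\alpha r_{n-1}$ is the shadow in the truncated module of the identity $\beta r_{n-5}=h_0^{4}r_{n+3}=h_0\cdot\alpha r_{n-1}$ holding in $\Ext_{\SA(1)}^{*}(\widetilde H^{*,*}(DQ_\infty),\M_2)$. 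The real work is then to verify, tridegree by tridegree, that no connecting-map class perturbs a relation pulled back from $DQ_\infty$ and to nail down the handful of $h_1$- and $\tau$-decorated relations near the top cell; as in the proofs of the earlier theorems of this section, any residual ambiguity can be settled by an explicit low-dimensional cocycle computation in an $\SA(1)$-resolution of $\widetilde H^{*,*}(DQ_n)$, or by comparison with the analogous classical computation under topological realization.
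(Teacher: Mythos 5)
Your proposal takes essentially the same route as the paper: the short exact sequence $\widetilde H^{*,*}(DQ_{\infty}/DQ_{n}) \to \widetilde H^{*,*}(DQ_{\infty}) \to \widetilde H^{*,*}(DQ_{n})$, with the first term identified (using $n\equiv 0 \bmod 4$) as $\Sigma^{n,n/2}\widetilde H^{*,*}(DQ_{\infty})$ so that Theorem \ref{theo:DQinf} applies, the resulting long exact sequence of $\Ext$ groups with maps pinned down by $\E$-linearity and computation in homological degree $0$, and a final explicit cocycle check to resolve the one genuine ambiguity, namely that $h_1 r_{n-1}\neq 0$. Your write-up is a correctly fleshed-out version of the paper's much terser argument, including the consistency checks (e.g.\ $h_0^3 r_3 = 0$, $h_0^4 r_7 = 0$) that make the truncation of the relation list work.
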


\begin{proof}
Consider the short exact sequence
\[
\widetilde H^{*,*}\left(\frac{DQ_{\infty}}{DQ_{n}}\right) \map
\widetilde H^{*,*}(DQ_{\infty}) \map
\widetilde H^{*,*}(DQ_{n}).
\]
Since $n$ is congruent to $0$ modulo $4$, the first term is isomorphic to 
$\Sigma^{n,\frac{n}{2}}\widetilde H^{*,*}(DQ_{\infty})$.  
Therefore, as an $\E$-module, 
$\Ext_{\SA(1)}^{*}(\widetilde H^{*,*} (\frac{DQ_{\infty}}{DQ_{n}}),\M_2)$
is equal to a shifted copy of
$\Ext_{\SA(1)}^{*}(\widetilde H^{*,*}(DQ_{\infty}),\M_2)$,
which we computed in Theorem \ref{theo:DQinf}.

The maps in the associated long exact of $\Ext$ groups are
entirely determined by $\E$-linearity and explicit computation
in homological degree $0$.
The long exact sequence tells us most of what we need.
The only ambiguity is whether
$h_1 r_{n-1}$ is zero or not zero.
Computations with explicit cocycles in homological degree 1 show
that it is not zero.
\end{proof}

Figure \ref{fig:DQn=0} is a pictorial representation of the above result.

\begin{theorem}
\label{theo:DQ2}
As an $\E$-module, 
$\Ext_{\SA(1)}^{*}(\widetilde H^{*,*}(DQ_{2}),\M_2)$ 
is given by the following generators and relations:
\begin{center}
\begin{tabular}{|l|l|}
\hline
generator & Adams tridegree \\
\hline
$x$ & $(1,0,1)$ \\
$y$ & $(3,1,2)$ \\
\hline
\end{tabular}
\hspace{5em}
\begin{tabular}{|l|}
\hline
relations \\
\hline
$h_0 x = 0$ \\
$h_0 y = \tau h_1^2 x$ \\
$\alpha x = \tau h_1^2 y$ \\
$\alpha y = 0$ \\
\hline
\end{tabular}
\end{center}
\end{theorem}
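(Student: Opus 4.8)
The plan is to follow the template of Theorems \ref{theo:DQinf} and \ref{theo:n=0}: identify $\widetilde H^{*,*}(DQ_2)$ as an explicit, very small $\SA(1)$-module, run a long exact sequence of $\Ext$ groups, read off the $\E$-module structure up to a tiny extension problem, and resolve that problem by hand.

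First I would pin down the module. By the description of $H^{*,*}(DQ_n)$ with $n=2$ (so $k=1$), the reduced cohomology $\widetilde H^{*,*}(DQ_2)$ is free over $\M_2$ on classes $a$ and $b$ in bidegrees $(1,1)$ and $(2,1)$, and by Proposition \ref{prop:DQinfty-A(1)-module}, together with $b^2=0$ in $H^{*,*}(DQ_2)$, the only nonzero $\SA(1)$-operation is $\Sq{1}a=b$. Thus $\widetilde H^{*,*}(DQ_2)\cong\Sigma^{1,1}\SE(0)$, with $\Sq{2}$ acting trivially. In particular there is a short exact sequence of $\SA(1)$-modules
\[
0\map\Sigma^{2,1}\M_2\stackrel{\cdot\,\Sq{1}}{\longrightarrow}\widetilde H^{*,*}(DQ_2)\map\Sigma^{1,1}\M_2\map 0,
\]
namely the one induced by the inclusion $DQ_1\hookrightarrow DQ_2$, since $\widetilde H^{*,*}(DQ_1)=\Sigma^{1,1}\M_2$ carries the trivial $\SA(1)$-action.

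Next I would apply $\Ext_{\SA(1)}(-,\M_2)$. The two outer terms become free $\E$-modules of rank one, on generators $x$ in Adams tridegree $(1,0,1)$ and $x'$ in tridegree $(2,0,1)$; the connecting homomorphism of the resulting long exact sequence is Yoneda multiplication by the class of the extension, which is $h_0$ because the extension is the ``$\Sq{1}$-extension''. So the long exact sequence collapses to a short exact sequence of $\E$-modules
\[
0\map(\E/h_0\E)\,x\map\Ext_{\SA(1)}\bigl(\widetilde H^{*,*}(DQ_2),\M_2\bigr)\map\mathrm{Ann}_\E(h_0)\,x'\map 0.
\]
Reading $\E/h_0\E$ and $\mathrm{Ann}_\E(h_0)=(h_1)$ off the presentation of $\E$ in Theorem \ref{theo:Ext(M2,M2)ring} gives at once the two $\E$-module generators $x$ and $y$, where $y$ is a lift of $h_1x'$ and has Adams tridegree $(3,1,2)$, together with the relations $h_0x=0$ and $\alpha y=0$. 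A degree count then shows that the only remaining indeterminacy is whether $h_0y=\tau h_1^2x$ and $\alpha x=\tau h_1^2y$ hold or whether the right-hand sides should be replaced by zero: each of $h_0y$ and $\tau h_1^2y$ is forced to lie in a one-dimensional piece of the submodule $(\E/h_0\E)x$, spanned by $\tau h_1^2x$ and $\alpha x$ respectively (here $h_0h_1=0$ and $\tau h_1^3=0$ are what place these products in the submodule). All the other relations in the statement, such as $\alpha^2x=0$, are formal consequences of these and of the relations already valid in $\E$.

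Resolving those two hidden extensions is the main obstacle, and I would handle it as the analogous ambiguities are handled in Theorems \ref{theo:DQinf} and \ref{theo:n=0}. Since $\widetilde H^{*,*}(DQ_2)$ has only two $\M_2$-generators, an explicit free $\SA(1)$-resolution through homological degree three is short to write down, and reading off the cocycles that represent $x$, $y$, $h_0y$, and $\tau h_1^2y$ shows the two products are nonzero, giving $h_0y=\tau h_1^2x$ and $\alpha x=\tau h_1^2y$. Alternatively, one can deduce both relations by comparison with the $DQ_\infty$ computation of Theorem \ref{theo:DQinf}: the surjection $\widetilde H^{*,*}(DQ_\infty)\map\widetilde H^{*,*}(DQ_2)$ induces a map of $\Ext$ groups under which $x$ maps to $k$ and $y$ maps to $h_0r_3$ (that $y$ is not killed follows from a quick vanishing check on $\Ext^0_{\SA(1)}$ of the kernel module), so the relation $h_0^2r_3=\tau h_1^2k$ forces $h_0y=\tau h_1^2x$, and a parallel argument pins down $\alpha x=\tau h_1^2y$. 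Either way, the stated $\E$-module presentation follows.
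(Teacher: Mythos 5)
Your proposal is correct and takes essentially the same route as the paper: the short exact sequence $\Sigma^{2,1}\M_2 \map \widetilde H^{*,*}(DQ_2) \map \Sigma^{1,1}\M_2$, the induced long exact sequence of $\Ext$ groups (you add the useful observations that the connecting map is $h_0$ and that $\mathrm{Ann}_\E(h_0)=(h_1)$), and resolution of the remaining extension questions by explicit low-dimensional cocycle computations, exactly as in the paper. One caveat: your ``alternative'' resolution of $\alpha x = \tau h_1^2 y$ by comparison with $DQ_\infty$ does not work, because both candidate values map to zero in $\Ext_{\SA(1)}(\widetilde H^{*,*}(DQ_\infty),\M_2)$ (one has $\tau h_1^2\cdot h_0 r_3 = 0$ since $h_0h_1=0$, and $\alpha k = 0$), so for that relation you must fall back on your primary method of explicit cocycles.
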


\begin{proof}
The short exact sequence
\[
\Sigma^{2,1} \M_{2} \map 
\widetilde H^{*,*}(DQ_{2}) \map 
\Sigma^{1,1} \M_{2}
\]
yields a long exact sequence in $\Ext$ groups.
The maps in the long exact sequence are determined by $\E$-linearity,
together with explicit computations in homological degrees $0$ and $1$.
The long exact sequence tells us most of what we want.
The only ambiguity is whether $\tau h_1^2 y$ is zero or non-zero.
Explicit computations with cocycles in low dimensions
shows that it is non-zero.
\end{proof}

Figure \ref{fig:DQ2} is a pictorial representation of the above result.

\begin{theorem} 
\label{theo:n=2}
Let $n$ be congruent to $2$ modulo $4$, and let $n > 2$.
As an $\E$-module,
$\Ext_{\SA(1)}^{*}(\widetilde H^{*,*} (DQ_{n}),\M_2)$ 
is given by the following generators and relations:
\begin{center}
\begin{tabular}{|l|l|}
\hline
generator & Adams tridegree \\
\hline
$k$ & $(1,0,1)$ \\
$z$ & $(n+1,1,\frac{n+2}{2})$ \\
$r_{4i-1}$ for $1 \leq i \leq \frac{n-2}{4}$ & $(4i-1,0,2i)$ \\
\hline
\end{tabular}
\hspace{1em}
\begin{tabular}{|l|}
\hline
relations \\
\hline
$h_{0}k=0$ \\
$\alpha k=0$ \\
$h_{0}^{2}r_{3}=\tau h_{1}^{2}k$ \\
$h_{0}^{4}r_{7}=0$ \\
$h_{1}r_{4i-1}=0$ if $1\leq i \leq \frac{n-2}{4}$\\
$\alpha r_{4i-1}=h_{0}^{3}r_{4i+3}$ if $1 \leq i < \frac{n-2}{4}$ \\
$\beta r_{4i-1}=h_{0}^{4}r_{4i+7}$ if $1 \leq i < \frac{n-6}{4}$ \\
$\alpha r_{n-3}=h_{0}^{2}z$ \\
$\beta r_{n-7}=h_{0}^{3}z$ \\
$\beta r_{n-3} = \alpha z$ \\
\hline
\end{tabular}
\end{center}
\end{theorem}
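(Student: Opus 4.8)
The plan is to imitate the proofs of Theorems~\ref{theo:DQinf}, \ref{theo:n=0}, and~\ref{theo:DQ2}: I will exhibit a short exact sequence of $\SA(1)$-modules whose outer terms have already-computed $\Ext$ groups, run the induced long exact sequence in $\Ext_{\SA(1)}(-,\M_2)$, pin down its maps using $\E$-linearity together with a small calculation in low homological degree, and dispose of the one leftover multiplicative ambiguity by an explicit computation with cocycles.

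Concretely, write $n = 4j+2$ with $j \geq 1$. Since $DQ_n$ is a subvariety of $DQ_{n+2}$, the cofiber sequence $DQ_n \map DQ_{n+2} \map DQ_{n+2}/DQ_n$ yields a short exact sequence of $\SA(1)$-modules
\[
0 \map \widetilde H^{*,*}\!\left(\frac{DQ_{n+2}}{DQ_n}\right) \map
\widetilde H^{*,*}(DQ_{n+2}) \map \widetilde H^{*,*}(DQ_{n}) \map 0.
\]
The first step is to identify the kernel term: it is free over $\M_2$ on two classes, in bidegrees $(n+1,\frac{n+2}{2})$ and $(n+2,\frac{n+2}{2})$, and Proposition~\ref{prop:DQinfty-A(1)-module} shows that $\Sq{1}$ carries the first onto the second while $\Sq{2}$ annihilates both --- the congruence $n \equiv 2 \pmod 4$ is exactly what forces $\Sq{2}$ to vanish on the bottom class. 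Hence $\widetilde H^{*,*}(DQ_{n+2}/DQ_n) \cong \Sigma^{n,n/2}\widetilde H^{*,*}(DQ_2)$. Since $n+2$ is a positive integer congruent to $0$ modulo $4$, Theorem~\ref{theo:n=0} computes $\Ext_{\SA(1)}(\widetilde H^{*,*}(DQ_{n+2}),\M_2)$, while Theorem~\ref{theo:DQ2} (shifted by $(n,n/2)$) computes $\Ext_{\SA(1)}(\widetilde H^{*,*}(DQ_2),\M_2)$.

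Applying $\Ext_{\SA(1)}(-,\M_2)$ produces a long exact sequence of $\E$-modules whose maps are essentially forced: on $\Ext^0$ they are read off from the behavior on module generators, and $\E$-linearity then propagates this everywhere. I expect the outcome to be that $k$ and the $r_{4i-1}$ with $1 \leq i \leq \frac{n-2}{4}$ survive from $\Ext_{\SA(1)}(\widetilde H^{*,*}(DQ_{n+2}),\M_2)$; that the top generator $r_{n+1}$ of that module is carried onto the generator $x$ of the shifted $\Ext_{\SA(1)}(\widetilde H^{*,*}(DQ_2),\M_2)$ and is therefore killed; and that the new generator $z$ in tridegree $(n+1,1,\frac{n+2}{2})$ appears as the preimage of $h_0 r_{n+1}$, so that the relations $\alpha r_{n-3} = h_0^2 z$, $\beta r_{n-7} = h_0^3 z$, and $\beta r_{n-3} = \alpha z$ follow by transporting the corresponding relations of Theorem~\ref{theo:n=0} for $DQ_{n+2}$ across the sequence. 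The hard part will be the one thing the long exact sequence cannot decide on its own --- precisely the kind of hidden extension encountered in Theorems~\ref{theo:DQinf}, \ref{theo:n=0}, and~\ref{theo:DQ2} --- namely whether a certain product vanishes or takes its unique possible nonzero value; to settle it I would write down enough of a free $\SA(1)$-resolution of $\widetilde H^{*,*}(DQ_n)$ to evaluate that product with explicit cocycles. That bookkeeping is the only genuine work in the argument; everything else is formal.
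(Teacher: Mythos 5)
Your proposal matches the paper's proof: the same short exact sequence $\Sigma^{n,n/2}\widetilde H^{*,*}(DQ_2) \to \widetilde H^{*,*}(DQ_{n+2}) \to \widetilde H^{*,*}(DQ_n)$, the same use of Theorems \ref{theo:n=0} and \ref{theo:DQ2} together with $\E$-linearity and homological-degree-$0$ computations to determine the maps of the long exact sequence, and the same appeal to explicit cocycles to settle the one remaining ambiguity (which the paper identifies as whether $h_1 z$ vanishes). The only minor quibble is that $\Sq{2}$ acts trivially on $\widetilde H^{*,*}(DQ_{n+2}/DQ_n)$ for degree reasons alone, not because of the congruence on $n$, but this does not affect the argument.
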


\begin{proof} 
The quotient $\widetilde H^{*,*}(\frac{DQ_{n+2}}{DQ_{n}})$ is isomorphic to 
$\Sigma^{n,\frac{n}{2}} \widetilde H^{*,*}(DQ_{2}).$ 
The short exact sequence 
\[
\Sigma^{n,\frac{n}{2}}\widetilde H^{*,*}(DQ_{2}) \map
\widetilde H^{*,*}(DQ_{n+2}) \map \widetilde H^{*,*}(DQ_{n})
\]
yields a long exact sequence in $\Ext$ groups.  The maps in this sequence
are determined by $\E$-linearity and explicit computations in 
homological degree $0$.

The long exact sequence gives us most of what we want.
The only ambiguity is whether $h_1 z$ is zero or non-zero.
Computations with explicit cocycles in homological dimension $2$
shows that $h_1 z$ is non-zero.
\end{proof}

Figure \ref{fig:DQn=2} is a pictorial representation of the previous
result.

\begin{remark}
When  $n$ is conrguent to $3$ modulo $4$,
the $\SA(1)$-module $\widetilde H^{*,*}(DQ_{n})$ splits as
\[
\widetilde H^{*,*}(DQ_{n-1}) 
\bigoplus \Sigma^{n,\frac{n+1}{2}}\M_{2}.
\]
Therefore this case reduces to the case
when $n$ is congruent to $2$ modulo $4$.
\end{remark}

\begin{theorem} 
\label{theo:n=1} 
Let $n$ be congruent to $1$ modulo $4$, and let $n>1$.
As an $\E$-module,
$\Ext_{\SA(1)}^{*}(\widetilde H^{*,*}(DQ_{n}), \M_2)$ 
is given by the following generators and relations:
\begin{center}
\begin{tabular}{|l|l|}
\hline
generator & Adams tridegree \\
\hline
$k$ & $(1,0,1)$ \\
$r_{4i-1}$ for $1 \leq i \leq \frac{n-1}{4}$ & $(4i-1,0,2i)$ \\
$v$ & $(n,1,\frac{n+1}{2})$ \\
$u$ & $(n+2,2,\frac{n+3}{2})$ \\
$t$ & $(n+4,3,\frac{n+5}{2})$ \\
\hline
\end{tabular}
\hspace{5em}
\begin{tabular}{|l|}
\hline
relations \\
\hline
$h_{0}k=0$ \\
$\alpha k=0$ \\
$h_0^2 r_3 = \tau h_1^2 k$ \\
$h_0^4 r_7 = 0$ \\
$h_{1}r_{4i-1}=0$ if $1 \leq i \leq \frac{n-1}{4}$ \\
$\alpha r_{4i-1}=h_{0}^{3}r_{4i+3}$ if $1 \leq i < \frac{n-1}{4}$ \\
$\beta r_{4i-1}=h_{0}^4 r_{4i+7}$ if $1 \leq i < \frac{n-5}{4}$ \\
$\alpha r_{n-2}=h_{0}u$ \\
$\beta r_{n-6} = h_0^2 u$ \\
$h_{1}v=0$ \\
$h_{1}u=0$ \\
$h_{1}t=0$ \\
$\alpha v=h_{0} t$ \\
$\alpha u=h_{0}\beta r_{n-2}$. \\
$\alpha t=h_{0}\beta v$ \\
\hline
\end{tabular}
\end{center} 

\end{theorem}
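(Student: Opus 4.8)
The plan is to imitate the proofs of Theorems \ref{theo:n=0} and \ref{theo:n=2}, using the inclusion $DQ_{n-1} \hookrightarrow DQ_n$. From the known ring structure of $H^{*,*}(DQ_n)$, the induced map $H^{*,*}(DQ_n) \map H^{*,*}(DQ_{n-1})$ is surjective with kernel the $\M_2$-span of $ab^{(n-1)/2}$, a class in bidegree $(n,\frac{n+1}{2})$. Because $n \equiv 1 \bmod 4$, the exponent $\frac{n-1}{2}$ is even, so Proposition \ref{prop:DQinfty-A(1)-module} gives $\Sq{2}\bigl(ab^{(n-1)/2}\bigr) = 0$, while $\Sq{1}\bigl(ab^{(n-1)/2}\bigr) = b^{(n+1)/2} = 0$ since $b^{(n+1)/2}$ already vanishes in $H^{*,*}(DQ_n)$. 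Hence this kernel is a trivial $\SA(1)$-module, and we obtain a short exact sequence of $\SA(1)$-modules
\[
0 \map \Sigma^{n,\frac{n+1}{2}} \M_2 \map \widetilde H^{*,*}(DQ_n) \map \widetilde H^{*,*}(DQ_{n-1}) \map 0.
\]
Since $n-1 \equiv 0 \bmod 4$, the $\Ext$ of the quotient is given by Theorem \ref{theo:n=0} (with $n$ there replaced by $n-1$), and $\Ext_{\SA(1)}^{*}(\Sigma^{n,\frac{n+1}{2}}\M_2,\M_2)$ is a copy of $\E$ with Adams tridegrees shifted by $(n,0,\frac{n+1}{2})$; write $\phi$ for its generator in homological degree $0$.

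Next I would analyze the long exact sequence in $\Ext_{\SA(1)}(-,\M_2)$. The connecting map $\partial$ is $\E$-linear, hence determined by $\partial\phi$, which for degree reasons must be a scalar multiple of $h_1 r_{n-2}$; it cannot vanish, since otherwise $\phi$ would survive to an $\Ext_{\SA(1)}^{0}$-generator in bidegree $(n,\frac{n+1}{2})$ that does not occur in the answer, so $\partial\phi = h_1 r_{n-2}$ (equivalently, the extension class of the displayed sequence is ``$\Sq{2}$ on the top generator $ab^{(n-3)/2}$'' of $\widetilde H^{*,*}(DQ_{n-1})$, which under the identification of indecomposables corresponds to $h_1$). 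Using $h_0 h_1 = 0$, $h_1 \alpha = 0$ and $\tau h_1^3 = 0$ in $\E$, the elements $h_0$, $\alpha$ and $\tau h_1^2$ each annihilate $h_1 r_{n-2}$, so $v := h_0\phi$, $t := \alpha\phi$ and $u := \tau h_1^2\phi$ lie in $\ker\partial$ and therefore give classes of $\Ext_{\SA(1)}^{*}(\widetilde H^{*,*}(DQ_n),\M_2)$ in precisely the tridegrees claimed for the new generators. The rest of $\Ext_{\SA(1)}^{*}(\widetilde H^{*,*}(DQ_n),\M_2)$ is $\operatorname{coker}\partial$, i.e.\ the answer of Theorem \ref{theo:n=0} modulo the ideal generated by $h_1 r_{n-2}$; this supplies the generators $k$ and $r_{4i-1}$, accounts for the fact that $h_1 r_{n-2}$ is now zero (so the relation $h_1 r_{4i-1}=0$ holds for all $1 \leq i \leq \frac{n-1}{4}$), and carries over the remaining relations of Theorem \ref{theo:n=0}.

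At this point the long exact sequence determines everything up to a short exact sequence $0 \map \operatorname{coker}\partial \map \Ext_{\SA(1)}^{*}(\widetilde H^{*,*}(DQ_n),\M_2) \map \ker\partial \map 0$ of $\E$-modules, and the only genuine ambiguities are the hidden products $h_0 u$ and $\alpha u$, each of which lands in $\operatorname{coker}\partial$ and a priori is either $0$ or $\alpha r_{n-2}$, respectively $0$ or $h_0\beta r_{n-2}$. I expect resolving these to be the main obstacle: as in the other theorems of this section it requires writing down the relevant part of a minimal free $\SA(1)$-resolution of $\widetilde H^{*,*}(DQ_n)$ and computing with explicit cocycles in homological degrees $2$ and $3$, which will show $h_0 u = \alpha r_{n-2}$ and $\alpha u = h_0\beta r_{n-2}$. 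The remaining relations, namely $h_1 v = h_1 u = h_1 t = 0$, $\alpha v = h_0 t$, $\alpha t = h_0\beta v$ (using $\alpha^2 = h_0^2\beta$ in $\E$), and $\beta r_{n-6} = h_0^2 u$ (using $\beta r_{n-6} = h_0\alpha r_{n-2}$ from Theorem \ref{theo:n=0}), then follow formally from $v = h_0\phi$, $u = \tau h_1^2\phi$, $t = \alpha\phi$ and the relations in $\E$. Finally, the whole computation can be checked for consistency against its classical counterpart via the topological realization functor.
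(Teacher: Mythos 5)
Your proposal is correct in outline, but it runs the induction in the opposite direction from the paper's. The paper realizes $\widetilde H^{*,*}(DQ_n)$ as the \emph{quotient} term of the short exact sequence $\Sigma^{n+1,\frac{n+1}{2}}\M_2 \map \widetilde H^{*,*}(DQ_{n+1}) \map \widetilde H^{*,*}(DQ_{n})$, so it leans on the case $n+1\equiv 2 \bmod 4$ (Theorem \ref{theo:n=2}) and must determine the maps of the long exact sequence in homological degrees $0$ and $1$; the single remaining ambiguity there is whether $h_0 t$ vanishes. You instead realize $\widetilde H^{*,*}(DQ_n)$ as the \emph{middle} term of $\Sigma^{n,\frac{n+1}{2}}\M_2 \map \widetilde H^{*,*}(DQ_{n}) \map \widetilde H^{*,*}(DQ_{n-1})$, leaning on the case $n-1\equiv 0\bmod 4$ (Theorem \ref{theo:n=0}). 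Your route has the pleasant feature that the connecting map is pinned down by a single degree-zero observation (the kernel class $ab^{(n-1)/2}=\Sq{2}(ab^{(n-3)/2})$ is $\SA(1)$-decomposable, forcing $\partial\phi = h_1 r_{n-2}$), and it transparently explains why $v$, $u$, $t$ sit in their stated tridegrees, as lifts of $h_0\phi$, $\tau h_1^2\phi$, $\alpha\phi$ generating $\ker\partial$. The price is two hidden extensions ($h_0u$ and $\alpha u$) to resolve by explicit cocycle work instead of the paper's one ($h_0t$); since both proofs defer that step to low-dimensional resolution computations, the two arguments are at the same level of rigor. Two small points to tighten: the relations $h_1v=h_1u=h_1t=0$, $\alpha v=h_0t$, $\alpha t=h_0\beta v$ do not follow ``formally,'' since $v,u,t$ are only lifts and each of these products is a priori an unknown element of $\operatorname{coker}\partial$ --- they hold because the relevant tridegrees of $\operatorname{coker}\partial$ vanish, which should be checked against Theorem \ref{theo:n=0}; and you should verify that the annihilator of $h_1 r_{n-2}$ in $\E$ is exactly the ideal $(h_0,\alpha,\tau h_1^2)$, so that $\ker\partial$ has no further generators.
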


\begin{proof}
The short exact sequence
\[
\Sigma^{n+1,\frac{n+1}{2}}\M_2 \map 
\widetilde H^{*,*}(DQ_{n+1}) \map 
\widetilde H^{*,*}(DQ_{n})
\]
yields a long exact sequence in $\Ext$ groups.  The maps in this 
sequence are determined by $\E$-linearity and explicit computations
in homological degrees $0$ and $1$.

The long exact sequence gives us most of what we want to know.
The only ambiguity is whether $h_0 t$ is zero or non-zero.
As usual, this can be resolved by explicit computations with
cocycles in low dimensions.  The result is that $h_0 t$ is non-zero.
\end{proof}

\appendix

\newpage

\section{Adams charts}

This appendix contains Adams charts for many of the computations
made earlier in Section \ref{sctn:Ext-compute}.

We use the following notation in all of the charts.
Solid dots represent copies of $\M_2$, while open circles represent
copies of $\M_2 / \tau$.  

Vertical lines indicate multiplications by $h_0$,
and lines of slope 1 indicate multiplications by $h_1$.
Dotted vertical lines indicate that 
$h_0$ times a generator equals $\tau$ times a generator.
In Figure \ref{fig:DQ2}, the diagonal dotted lines indicate that
$\alpha$ times a generator equals $\tau$ times a generator.

The horizontal and vertical coordinates indicate the Adams bidegree.
The weights of some elements are shown in parentheses.

Figure \ref{fig:DQn=0} is representative of the computation
of $\Ext^*_{\SA(1)}(\tilde{H}^{*,*}(DQ_n), \M_2)$
for $n$ congruent to $0$ modulo $4$.  The interested reader can
easily construct charts for values of $n$ other than $16$.
Similarly,
Figures \ref{fig:DQn=2} and \ref{fig:DQn=1} are representative
of the computation 
of $\Ext^*_{\SA(1)}(\tilde{H}^{*,*}(DQ_n), \M_2)$
for $n$ congruent to $2$ and $1$ modulo $4$.

%%%%%%%%%%%%%%%%%%%%%%%%%%%%%%%%%%%%%%%%%%%%%%%%%%%%%%%%%%%%%%

\begin{figure}[htbp!]
\begin{center}
\setlength{\unitlength}{0.63cm}
\begin{picture}(20,13)(-1,-1)
\put(-1,-1){\line(1,0){20}}
\put(-1,-1){\line(0,1){12}}

\put(0,-1.5){\tiny 0}
\put(1,-1.5){\tiny 1}
\put(4,-1.5){\tiny 4}
\put(8,-1.5){\tiny 8}
\put(12,-1.5){\tiny 12}
\put(16,-1.5){\tiny 16}

\put(0,0){\bc}
\put(0,0){\mhz} \put(0.1,1.3){\tiny $h_0(0)$}
\put(0,1){\bc}
\put(0,1){\mhz}
\put(0,2){\bc}
\put(0,2){\mhz}
\put(0,3){\bc}
\put(0,3){\mhz}
\put(0,4){\bc}
\put(0,4){\mhz}
\put(0,5){\bc}
\put(0,5){\mhz}
\put(0,6){\bc}
\put(0,6){\mhz}
\put(0,7){\bc}
\put(0,7){\mhz}
\put(0,8){\bc}
\put(0,8){\mhz}
\put(0,9){\bc}
\put(0,9){\mhz}
\put(0,10){\bc}
\put(0,10){\mhz}

\put(0,0){\mhone}
\put(1,1){\bc} \put(1,0.4){\tiny $h_1(1)$}
\put(1,1){\mhone}
\put(2,2){\bc}
\put(2,2){\mhone}
\put(3,3){\rc}
\put(3,3){\mhone}
\put(4,4){\rc}
\put(4,4){\mhone}
\put(5,5){\rc}
\put(5,5){\mhone}
\put(6,6){\rc}
\put(6,6){\mhone}
\put(7,7){\rc}
\put(7,7){\mhone}
\put(8,8){\rc}
\put(8,8){\mhone}
\put(9,9){\rc}
\put(9,9){\mhone}
\put(10,10){\rc}
\put(10,10){\mhone}

\put(4,3){\bc}
\put(4,3){\line(1,2){0.5}}
\put(4.5,4){\bc}
\put(4.5,4){\line(-1,2){0.5}}
\put(4,5){\bc}
\put(4,5){\mhz}
\put(4,6){\bc}
\put(4,6){\mhz}
\put(4,7){\bc}
\put(4,7){\mhz}
\put(4,8){\bc}
\put(4,8){\mhz}
\put(4,9){\bc}
\put(4,9){\mhz}
\put(4,10){\bc}
\put(4,10){\mhz}

\put(8,4){\bc}
\put(8,4){\mhz}
\put(8,5){\bc}
\put(8,5){\mhz}
\put(8,6){\bc}
\put(8,6){\mhz}
\put(8,7){\bc}
\put(8,7){\line(1,2){0.5}}
\put(8.5,8){\bc}
\put(8.5,8){\line(-1,2){0.5}}
\put(8,9){\bc}
\put(8,9){\mhz}
\put(8,10){\bc}
\put(8,10){\mhz}

\put(8,4){\mhone}
\put(9,5){\bc}
\put(9,5){\mhone}
\put(10,6){\bc}
\put(10,6){\mhone}
\put(11,7){\rc}
\put(11,7){\mhone}
\put(12,8){\rc}
\put(12,8){\mhone}
\put(13,9){\rc}
\put(13,9){\mhone}
\put(14,10){\rc}
\put(14,10){\mhone}

\put(12,7){\bc}
\put(12,7){\line(1,2){0.5}}
\put(12.5,8){\bc}
\put(12.5,8){\line(-1,2){0.5}}
\put(12,9){\bc}
\put(12,9){\mhz}
\put(12,10){\bc}
\put(12,10){\mhz}

\put(16,8){\bc}
\put(16,8){\mhz}
\put(16,9){\bc}
\put(16,9){\mhz}
\put(16,10){\bc}
\put(16,10){\mhz}

\put(16,8){\mhone} \put(16,7.5){\tiny $\beta ^{2}$}
\put(17,9){\bc}
\put(17,9){\mhone}
\put(18,10){\bc}
\put(18,10){\mhone}

\put(4,2.5){\tiny $\alpha(2)$}
\put(8,3.5){\tiny $\beta(4)$}
\put(12,6.5){\tiny $\alpha \beta$}
\end{picture}\caption{$\Ext_{\SA(1)}^{*}(\M_2,\M_2)$}\label{fig:AdamsM2}
\end{center}
\end{figure}

%%%%%%%%%%%%%%%%%%%%%%%%%%%%%%%%%%%%%%%%%%%%%%%%%%%%%%%%%%%%%%%%%%%%%%%%

\begin{center}
\begin{figure}[htbp!]
\setlength{\unitlength}{0.57cm}
\begin{picture}(22,8)(-1,-1)
\put(-1,-1){\line(1,0){22}}
\put(-1,-1){\line(0,1){7}}

\put(0,-1.5){\tiny -1}
\put(4,-1.5){\tiny 3}
\put(8,-1.5){\tiny 7}
\put(12,-1.5){\tiny 11}
\put(16,-1.5){\tiny 15}
\put(20,-1.5){\tiny 19}
\put(0,0){\bc} \put(0,0){\mhz} \put(0,-0.7){\tiny $r_{-1}(0)$}
\put(0,1){\bc} \put(0,1){\mhz}
\put(0,2){\bc}  \put(0,2){\mhz}
\put(0,3){\bc}  \put(0,3){\mhz}
\put(0,4){\bc}  \put(0,4){\mhz}
\put(0,5){\bc}  \put(0,5){\mhz}

\put(4,0){\bc}  \put(4,0){\mhz} \put(4,-0.7){\tiny $r_{3}(2)$}
\put(4,1){\bc}   \put(4,1){\mhz}
\put(4,2){\bc}   \put(4,2){\mhz}
\put(4,3){\bc} \put(4,3){\mhz}
\put(4,4){\bc} \put(4,4){\mhz}
\put(4,5){\bc} \put(4,5){\mhz}

\put(8,0){\bc} \put(8,0){\mhz} \put(8,-0.7){\tiny $r_{7}(4)$}
\put(8,1){\bc}  \put(8,1){\mhz}
\put(8,2){\bc}\put(8,2){\mhz}
\put(8,3){\bc}\put(8,3){\mhz}
\put(8,4){\bc}\put(8,4){\mhz}
\put(8,5){\bc}\put(8,5){\mhz}

\put(12,0){\bc}\put(12,0){\mhz} \put(12,-0.7){\tiny $r_{11}(6)$}
\put(12,1){\bc}\put(12,1){\mhz}
\put(12,2){\bc}\put(12,2){\mhz}
\put(12,3){\bc}\put(12,3){\mhz}
\put(12,4){\bc}\put(12,4){\mhz}
\put(12,5){\bc}\put(12,5){\mhz}

\put(16,0){\bc}\put(16,0){\mhz} \put(16,-0.7){\tiny $r_{15}(8)$}
\put(16,1){\bc}\put(16,1){\mhz}
\put(16,2){\bc}\put(16,2){\mhz}
\put(16,3){\bc}\put(16,3){\mhz}
\put(16,4){\bc}\put(16,4){\mhz}
\put(16,5){\bc}\put(16,5){\mhz}

\put(20,0){\bc}\put(20,0){\mhz} \put(20,-0.7){\tiny $r_{19}(10)$}
\put(20,1){\bc}\put(20,1){\mhz}
\put(20,2){\bc}\put(20,2){\mhz}
\put(20,3){\bc}\put(20,3){\mhz}
\put(20,4){\bc}\put(20,4){\mhz}
\put(20,5){\bc}\put(20,5){\mhz}

\end{picture}
\caption{$\Ext_{\SA(1)}^{*}(R,\M_2)$}\label{fig:Ext(R,M2)}
\end{figure}
\end{center}

%%%%%%%%%%%%%%%%%%%%%%%%%%%%%%%%%%%%%%%%%%%%%%%%%%%%%%%%%%%%%%%%%%%%%%%%

\begin{center}
\begin{figure}[htbp!]
\setlength{\unitlength}{0.5cm}
\begin{picture}(25,15)(-1,-1)
\put(-1,-1){\line(1,0){25}}
\put(-1,-1){\line(0,1){14}}

\put(0,-1.5){\tiny 0}
\put(1,-1.5){\tiny 1}
\put(2,-1.5){\tiny 2}
\put(3,-1.5){\tiny 3}
\put(5,-1.5){\tiny 5}
\put(7,-1.5){\tiny 7}
\put(9,-1.5){\tiny 9}
\put(11,-1.5){\tiny 11}
\put(13,-1.5){\tiny 13}
\put(15,-1.5){\tiny 15}
\put(17,-1.5){\tiny 17}
\put(19,-1.5){\tiny 19}
\put(21,-1.5){\tiny 21}
\put(23,-1.5){\tiny 23}
\put(1,0){\bc} \put(0.5,-0.6){\tiny $k(1)$}
\put(2,1){\bc}
\put(3,2){\bc}
\put(4,3){\rc}
\put(5,4){\rc}
\put(6,5){\rc}
\put(7,6){\rc}
\put(8,7){\rc}
\put(9,8){\rc}
\put(10,9){\rc}
\put(11,10){\rc}
\put(12,11){\rc}
\put(13,12){\rc}

\put(9,4){\bc} \put(8.5,3.4){\tiny $\beta k$}
\put(10,5){\bc}
\put(11,6){\bc}
\put(12,7){\rc}
\put(13,8){\rc}
\put(14,9){\rc}
\put(15,10){\rc}
\put(16,11){\rc}
\put(17,12){\rc}

\put(17,8){\bc}\put(16.5,7.3){\tiny $\beta^2 k$}
\put(18,9){\bc}
\put(19,10){\bc}
\put(20,11){\rc}
\put(21,12){\rc}

\put(1,0){\mhone}
\put(2,1){\mhone}
\put(3,2){\mhone}
\put(4,3){\mhone}
\put(5,4){\mhone}
\put(6,5){\mhone}
\put(7,6){\mhone}
\put(8,7){\mhone}
\put(9,8){\mhone}
\put(10,9){\mhone}
\put(11,10){\mhone}
\put(12,11){\mhone}
\put(13,12){\mhone}

\put(9,4){\mhone}
\put(10,5){\mhone}
\put(11,6){\mhone}
\put(12,7){\mhone}
\put(13,8){\mhone}
\put(14,9){\mhone}
\put(15,10){\mhone}
\put(16,11){\mhone}
\put(17,12){\mhone}

\put(17,8){\mhone}
\put(18,9){\mhone}
\put(19,10){\mhone}
\put(20,11){\mhone}
\put(21,12){\mhone}

\put(3,0){\bc} \put(2.5,-0.6){\tiny $r_{3}(2)$}
\put(3,1){\bc}
\put(3,2){\bc}
\put(7,0){\bc} \put(6.5,-0.6){\tiny $r_{7}(4)$}
\put(7,1){\bc}
\put(7,2){\bc}
\put(7,3){\bc}
\put(11,0){\bc} \put(10.5,-0.6){\tiny $r_{11}(6)$} 
\put(11,1){\bc}
\put(11,2){\bc}
\put(11,3){\bc}
\put(11,4){\bc}
\put(11,5){\bc}
\put(11,6){\bc}
\put(15,0){\bc} \put(14.5,-0.6){\tiny $r_{15}(8)$}
\put(15,1){\bc}
\put(15,2){\bc}
\put(15,3){\bc}
\put(15,4){\bc}
\put(15,5){\bc}
\put(15,6){\bc}
\put(15,7){\bc}
\put(19,0){\bc} \put(18.5,-0.6){\tiny $r_{19}(10)$}
\put(19,1){\bc}
\put(19,2){\bc}
\put(19,3){\bc}
\put(19,4){\bc}
\put(19,5){\bc}
\put(19,6){\bc}
\put(19,7){\bc}
\put(19,8){\bc}
\put(19,9){\bc}
\put(19,10){\bc}
\put(23,0){\bc} \put(22.5,-0.6){\tiny $r_{23}(12)$}
\put(23,1){\bc}
\put(23,2){\bc}
\put(23,3){\bc}
\put(23,4){\bc}
\put(23,5){\bc}
\put(23,6){\bc}
\put(23,7){\bc}
\put(23,8){\bc}
\put(23,9){\bc}
\put(23,10){\bc}
\put(23,11){\bc}
\put(3,0){\mhz}
\put(3,1){\mhzt}
\put(7,0){\mhz}
\put(7,1){\mhz}
\put(7,2){\mhz}
\put(11,0){\mhz}
\put(11,1){\mhz}
\put(11,2){\mhz}
\put(11,3){\mhz}
\put(11,4){\mhz}
\put(11,5){\mhzt}
\put(15,0){\mhz}
\put(15,1){\mhz}
\put(15,2){\mhz}
\put(15,3){\mhz}
\put(15,4){\mhz}
\put(15,5){\mhz}
\put(15,6){\mhz}
\put(19,0){\mhz}
\put(19,1){\mhz}
\put(19,2){\mhz}
\put(19,3){\mhz}
\put(19,4){\mhz}
\put(19,5){\mhz}
\put(19,6){\mhz}
\put(19,7){\mhz}
\put(19,8){\mhz}
\put(19,9){\mhzt}
\put(23,0){\mhz}
\put(23,1){\mhz}
\put(23,2){\mhz}
\put(23,3){\mhz}
\put(23,4){\mhz}
\put(23,5){\mhz}
\put(23,6){\mhz}
\put(23,7){\mhz}
\put(23,8){\mhz}
\put(23,9){\mhz}
\put(23,10){\mhz}
\end{picture}
\caption{$\Ext_{\SA(1)}^{*}(\widetilde H^{*,*}(DQ_{\infty}),\M_2)$}
\label{fig:ExtDQinf}
\end{figure}
\end{center}

\newpage

%%%%%%%%%%%%%%%%%%%%%%%%%%%%%%%%%%%%%%%%%%%%%%%%%%%%%%%%%%%%%%%%%%%%%%%%

\begin{figure}[htbp!]
\setlength{\unitlength}{0.5cm}
\begin{picture}(25,14)(0,-1)

\put(0,-1){\line(1,0){25}}
\put(0,-1){\line(0,1){13}}
\put(0,-1.5){\tiny 0}
\put(3,-1.5){\tiny 3}
\put(7,-1.5){\tiny 7}
\put(11,-1.5){\tiny 11}
\put(15,-1.5){\tiny 15}
\put(19,-1.5){\tiny 19}
\put(23,-1.5){\tiny 23}

\put(1,0){\bc}\put(1,0){\mhone}\put(0.5,-0.6){\tiny $k(1)$}
\put(2,1){\bc}\put(2,1){\mhone}
\put(3,2){\bc}\put(3,2){\mhone}
\put(4,3){\rc}\put(4,3){\mhone}
\put(5,4){\rc}\put(5,4){\mhone}
\put(6,5){\rc}\put(6,5){\mhone}
\put(7,6){\rc}\put(7,6){\mhone}
\put(8,7){\rc}\put(8,7){\mhone}
\put(9,8){\rc}\put(9,8){\mhone}
\put(10,9){\rc}\put(10,9){\mhone}
\put(11,10){\rc}\put(11,10){\mhone}
\put(12,11){\rc}\put(12,11){\mhone}

\put(3,0){\bc}\put(3,0){\mhz} \put(2.5,-0.6){\tiny $r_3(2)$}
\put(3,1){\bc}\put(3,1){\mhzt}

\put(7,0){\bc}\put(7,0){\mhz} \put(6.5,-0.6){\tiny $r_7(4)$}
\put(7,1){\bc}\put(7,1){\mhz}
\put(7,2){\bc}\put(7,2){\mhz}
\put(7,3){\bc}

\put(9,4){\bc}\put(9,4){\mhone} \put(8.5,3.4){\tiny $\beta k$}
\put(10,5){\bc}\put(10,5){\mhone}
\put(11,6){\bc}\put(11,6){\mhone}
\put(12,7){\rc}\put(12,7){\mhone}
\put(13,8){\rc}\put(13,8){\mhone}
\put(14,9){\rc}\put(14,9){\mhone}
\put(15,10){\rc}\put(15,10){\mhone}
\put(16,11){\rc}\put(16,11){\mhone}

\put(11,0){\bc}\put(11,0){\mhz} \put(10.5,-0.6){\tiny $r_{11}(6)$}
\put(11,1){\bc}\put(11,1){\mhz}
\put(11,2){\bc}\put(11,2){\mhz}
\put(11,3){\bc}\put(11,3){\mhz}
\put(11,4){\bc}\put(11,4){\mhz}
\put(11,5){\bc}\put(11,5){\mhzt}

\put(15,0){\bc}\put(15,0){\mhz} \put(14.5,-0.6){\tiny $r_{15}(8)$}
\put(15,1){\bc}\put(15,1){\mhz}
\put(15,2){\bc}\put(15,2){\mhz}
\put(15,3){\bc}\put(15,3){\mhz}
\put(15,4){\bc}\put(15,4){\mhz}
\put(15,5){\bc}\put(15,5){\mhz}
\put(15,6){\bc}\put(15,6){\mhz}
\put(15,7){\bc}

\put(15,0){\mhone}

\put(17,8){\bc}\put(17,8){\mhone} \put(16.5,7.4){\tiny $\beta^2 k$}
\put(18,9){\bc}\put(18,9){\mhone}
\put(19,10){\bc}\put(19,10){\mhone}
\put(20,11){\rc}\put(20,11){\mhone}

\put(19,3){\bc}
\put(19,3){\line(1,2){0.5}} \put(18.5,2.4){\tiny $\alpha r_{15}$}
\put(19.5,4){\bc}
\put(19.5,4){\line(-1,2){0.5}}
\put(19,5){\bc}\put(19,5){\mhz}
\put(19,6){\bc}\put(19,6){\mhz}
\put(19,7){\bc}\put(19,7){\mhz}
\put(19,8){\bc}\put(19,8){\mhz}
\put(19,9){\bc}\put(19,9){\mhzt}
\put(19,10){\bc}

\put(23,4){\bc}
\put(23,4){\mhz}
\put(23,4){\mhone}
  \put(22.5,3.4){\tiny $\beta r_{15}$}
\put(23,5){\bc}\put(23,5){\mhz}
\put(23,6){\bc}\put(23,6){\mhz}
\put(23,7){\bc}\put(23,7){\line(1,2){0.5}}
\put(23.5,8){\bc}\put(23.5,8){\line(-1,2){0.5}}
\put(23,9){\bc}\put(23,9){\mhz}
\put(23,10){\bc}\put(23,10){\mhz}
\put(23,11){\bc}

\put(16,1){\bc} \put(16,1){\mhone}
\put(17,2){\bc}\put(17,2){\mhone}
\put(18,3){\rc}\put(18,3){\mhone}
\put(19,4){\rc}\put(19,4){\mhone}
\put(20,5){\rc}\put(20,5){\mhone}
\put(21,6){\rc}\put(21,6){\mhone}
\put(22,7){\rc}\put(22,7){\mhone}
\put(23,8){\rc}\put(23,8){\mhone}

\put(24,5){\bc}\put(24,5){\mhone}

\end{picture}
\caption{$\Ext_{\SA(1)}^{*}(\widetilde H^{*,*}(DQ_{16}),\M_2)$
\label{fig:DQn=0}}
\end{figure}

%%%%%%%%%%%%%%%%%%%%%%%%%%%%%%%%%%%%%%%%%%%%%%%%%%%%%%%%%%%%%%%%%%%%%%%%

\begin{center}
\begin{figure}[htbp!]
\setlength{\unitlength}{0.7cm}
\begin{picture}(18,11)(0,-1)
\put(0,-1){\line(1,0){18}}
\put(0,-1){\line(0,1){10}}

\put(0,-1.5){\tiny 0}
\put(1,-1.5){\tiny 1}
\put(3,-1.5){\tiny 3}
\put(9,-1.5){\tiny 9}
\put(11,-1.5){\tiny 11}

\put(1,0){\bc}\put(1,0){\mhone} \put(0.5,-0.6){\tiny $x(1)$}
\qbezier[20](1,0)(4,2.25)(5,3)
\put(2,1){\bc}\put(2,1){\mhone}
\put(3,2){\bc}\put(3,2){\mhone} 
\put(4,3){\rc}\put(4,3){\mhone}
\put(5,4){\rc}\put(5,4){\mhone}
\put(6,5){\rc}\put(6,5){\mhone}
\put(7,6){\rc}\put(7,6){\mhone}
\put(8,7){\rc}\put(8,7){\mhone}
\put(9,8){\rc}\put(9,8){\mhone}

\put(3,1){\bc}\put(3,1){\mhone}\put(3,1){\mhzt} \put(2.5,0.4){\tiny $y(2)$}
\put(4,2){\bc}\put(4,2){\mhone}
\put(5,3){\bc}\put(5,3){\mhone}
\put(6,4){\rc}\put(6,4){\mhone}
\put(7,5){\rc}\put(7,5){\mhone}
\put(8,6){\rc}\put(8,6){\mhone}
\put(9,7){\rc}\put(9,7){\mhone}
\put(10,8){\rc}\put(10,8){\mhone}

\put(9,4){\bc}\put(9,4){\mhone} \put(8.5,3.4){\tiny $\beta x$}
\qbezier[20](9,4)(12,6.25)(13,7)
\put(10,5){\bc}\put(10,5){\mhone}
\put(11,6){\bc}\put(11,6){\mhone}
\put(12,7){\rc}\put(12,7){\mhone}
\put(13,8){\rc}\put(13,8){\mhone}

\put(11,5){\bc}\put(11,5){\mhone}\put(11,5){\mhzt} 
  \put(10.5,4.4){\tiny $\beta y$}
\put(12,6){\bc}\put(12,6){\mhone}
\put(13,7){\bc}\put(13,7){\mhone}
\put(14,8){\rc}\put(14,8){\mhone}

\end{picture}
\caption{Adams Chart for $\Ext_{\SA(1)}^{*}(\widetilde H^{*,*}(DQ_2), \M_2)$}
\label{fig:DQ2}
\end{figure}
\end{center}

\vspace{5in}
\mbox{}

\newpage

%%%%%%%%%%%%%%%%%%%%%%%%%%%%%%%%%%%%%%%%%%%%%%%%%%%%%%%%%%%%%%%%%%%%%%%%

\begin{center}
\begin{figure}[htbp!]
\setlength{\unitlength}{0.5cm}
\begin{picture}(25,14)(0,-1)
\put(0,-1){\line(1,0){25}}
\put(0,-1){\line(0,1){13}}

\put(0,-1.5){\tiny 0}
\put(1,-1.5){\tiny 1}
\put(3,-1.5){\tiny 3}
\put(7,-1.5){\tiny 7}
\put(11,-1.5){\tiny 11}
\put(15,-1.5){\tiny 15}
\put(19,-1.5){\tiny $19$}
\put(23,-1.5){\tiny $23$}

\put(1,0){\bc}\put(1,0){\mhone} \put(0.5,-0.6){\tiny $k(1)$}
\put(2,1){\bc}\put(2,1){\mhone}
\put(3,2){\bc}\put(3,2){\mhone}
\put(4,3){\rc}\put(4,3){\mhone}
\put(5,4){\rc}\put(5,4){\mhone}
\put(6,5){\rc}\put(6,5){\mhone}
\put(7,6){\rc}\put(7,6){\mhone}
\put(8,7){\rc}\put(8,7){\mhone}
\put(9,8){\rc}\put(9,8){\mhone}
\put(10,9){\rc}\put(10,9){\mhone}
\put(11,10){\rc}\put(11,10){\mhone}
\put(12,11){\rc}\put(12,11){\mhone}

\put(3,0){\bc}\put(3,0){\mhz} \put(2.5,-0.6){\tiny $r_3(2)$}
\put(3,1){\bc}\put(3,1){\mhzt}

\put(7,0){\bc}\put(7,0){\mhz} \put(6.5,-0.6){\tiny $r_7(4)$}
\put(7,1){\bc}\put(7,1){\mhz}
\put(7,2){\bc}\put(7,2){\mhz}
\put(7,3){\bc}

\put(9,4){\bc}\put(9,4){\mhone} \put(8.5,3.4){\tiny $\beta k$}
\put(10,5){\bc}\put(10,5){\mhone}
\put(11,6){\bc}\put(11,6){\mhone}
\put(12,7){\rc}\put(12,7){\mhone}
\put(13,8){\rc}\put(13,8){\mhone}
\put(14,9){\rc}\put(14,9){\mhone}
\put(15,10){\rc}\put(15,10){\mhone}
\put(16,11){\rc}\put(16,11){\mhone}

\put(11,0){\bc}\put(11,0){\mhz} \put(10.5,-0.6){\tiny $r_{11}(6)$}
\put(11,1){\bc}\put(11,1){\mhz}
\put(11,2){\bc}\put(11,2){\mhz}
\put(11,3){\bc}\put(11,3){\mhz}
\put(11,4){\bc}\put(11,4){\mhz}
\put(11,5){\bc}\put(11,5){\mhzt}

\put(15,1){\bc}\put(15,1){\mhz} 
  \put(14.5,0.2){\tiny $z(8)$}
\put(15,2){\bc}\put(15,2){\mhz}
\put(15,3){\bc}\put(15,3){\mhz}
\put(15,4){\bc}\put(15,4){\mhz}
\put(15,5){\bc}\put(15,5){\mhz}
\put(15,6){\bc}\put(15,6){\mhz}
\put(15,7){\bc}

\put(15,1){\mhone}
\put(16,2){\bc}\put(16,2){\mhone}
\put(17,3){\bc}\put(17,3){\mhone}
\put(18,4){\rc}\put(18,4){\mhone}
\put(19,5){\rc}\put(19,5){\mhone}
\put(20,6){\rc}\put(20,6){\mhone}
\put(21,7){\rc}\put(21,7){\mhone}
\put(22,8){\rc}\put(22,8){\mhone}
\put(23,9){\rc}\put(23,9){\mhone}

\put(17,8){\bc}\put(17,8){\mhone} \put(16.5,7.4){\tiny $\beta^2 k$}
\put(18,9){\bc}\put(18,9){\mhone}
\put(19,10){\bc}\put(19,10){\mhone}
\put(20,11){\rc}\put(20,11){\mhone}

\put(19,4){\bc}\put(19,4){\line(1,2){0.5}} \put(18.5,3.4){\tiny $\alpha z$}
\put(19.5,5){\bc}\put(19.5,5){\line(-1,2){0.5}}
\put(19,6){\bc}\put(19,6){\mhz}
\put(19,7){\bc}\put(19,7){\mhz}
\put(19,8){\bc}\put(19,8){\mhz}
\put(19,9){\bc}\put(19,9){\mhzt}

\put(23,5){\bc}\put(23,5){\mhz} \put(22.5,4.4){\tiny $\beta z$}
\put(23,6){\bc}\put(23,6){\mhz}
\put(23,7){\bc}\put(23,7){\mhz}
\put(23,8){\bc}\put(23,8){\line(1,2){0.5}}
\put(23.5,9){\bc}\put(23.5,9){\line(-1,2){0.5}}
\put(23,10){\bc}\put(23,10){\mhz}
\put(23,11){\bc}

\put(23,5){\mhone}
\put(24,6){\bc}\put(24,6){\mhone}

\end{picture}
\caption{$\Ext_{\SA(1)}^{*}(\widetilde H^{*,*}(DQ_{14}), \M_2)$ 
\label{fig:DQn=2}}
\end{figure}
\end{center}

%%%%%%%%%%%%%%%%%%%%%%%%%%%%%%%%%%%%%%%%%%%%%%%%%%%%%%%%%%%%%%%%%%%

\begin{center}
\begin{figure}[htbp!]
\setlength{\unitlength}{0.52cm}
\begin{picture}(24,15)(0,-1)
\put(0,-1){\line(1,0){24}}
\put(0,-1){\line(0,1){14}}

\put(0,-1.5){\tiny 0}
\put(1,-1.5){\tiny 1}
\put(3,-1.5){\tiny 3}
\put(7,-1.5){\tiny 7}
\put(11,-1.5){\tiny 11}
\put(15,-1.5){\tiny $15$}
\put(17,-1.5){\tiny $17$}
\put(19,-1.5){\tiny $19$}
\put(21,-1.5){\tiny $21$}
\put(23,-1.5){\tiny $23$}

\put(1,0){\bc}\put(1,0){\mhone}\put(0.5,-0.6){\tiny $k(1)$}
\put(2,1){\bc}\put(2,1){\mhone}
\put(3,2){\bc}\put(3,2){\mhone}
\put(4,3){\rc}\put(4,3){\mhone}
\put(5,4){\rc}\put(5,4){\mhone}
\put(6,5){\rc}\put(6,5){\mhone}
\put(7,6){\rc}\put(7,6){\mhone}
\put(8,7){\rc}\put(8,7){\mhone}
\put(9,8){\rc}\put(9,8){\mhone}
\put(10,9){\rc}\put(10,9){\mhone}
\put(11,10){\rc}\put(11,10){\mhone}
\put(12,11){\rc}\put(12,11){\mhone}
\put(13,12){\rc}\put(13,12){\mhone}

\put(3,0){\bc}\put(3,0){\mhz}\put(2.5,-0.6){\tiny $r_{3}(2)$}
\put(3,1){\bc}\put(3,1){\mhzt}

\put(7,0){\bc}\put(7,0){\mhz}\put(6.5,-0.6){\tiny $r_{7}(4)$}
\put(7,1){\bc}\put(7,1){\mhz}
\put(7,2){\bc}\put(7,2){\mhz}
\put(7,3){\bc}

\put(9,4){\bc}\put(9,4){\mhone} \put(8.5,3.4){\tiny $\beta k$}
\put(10,5){\bc}\put(10,5){\mhone}
\put(11,6){\bc}\put(11,6){\mhone}
\put(12,7){\rc}\put(12,7){\mhone}
\put(13,8){\rc}\put(13,8){\mhone}
\put(14,9){\rc}\put(14,9){\mhone}
\put(15,10){\rc}\put(15,10){\mhone}
\put(16,11){\rc}\put(16,11){\mhone}
\put(17,12){\rc}\put(17,12){\mhone}

\put(11,0){\bc}\put(11,0){\mhz}\put(10.5,-0.6){\tiny $r_{11}(6)$}
\put(11,1){\bc}\put(11,1){\mhz}
\put(11,2){\bc}\put(11,2){\mhz}
\put(11,3){\bc}\put(11,3){\mhz}
\put(11,4){\bc}\put(11,4){\mhz}
\put(11,5){\bc}\put(11,5){\mhzt}

\put(15,0){\bc}\put(15,0){\mhz}\put(14.5,-0.6){\tiny $r_{15}(8)$}
\put(15,1){\bc}\put(15,1){\mhz}
\put(15,2){\bc}\put(15,2){\mhz}
\put(15,3){\bc}\put(15,3){\mhz}
\put(15,4){\bc}\put(15,4){\mhz}
\put(15,5){\bc}\put(15,5){\mhz}
\put(15,6){\bc}\put(15,6){\mhz}
\put(15,7){\bc}

\put(17,1){\bc}\put(17,1){\mhz}
  \put(16.1,0.2){\tiny $v(9)$}
\put(17,2){\bc}\put(17,2){\mhz}
\put(17,3){\bc}\put(17,3){\mhz}
\put(17,4){\bc}\put(17,4){\mhz}
\put(17,5){\bc}\put(17,5){\mhz}
\put(17,6){\bc}\put(17,6){\mhz}
\put(17,7){\bc}\put(17,7){\line(-1,2){0.5}}
\put(16.5,8){\bc}\put(16.5,8){\line(1,2){0.5}}
\put(17,9){\bc}\put(17,9){\mhz}
\put(17,10){\bc}\put(17,10){\mhz}
\put(17,11){\bc}\put(17,11){\line(1,2){0.5}}
\put(17.5,12){\bc}\put(17.5,12){\line(-1,2){0.5}}

\put(17,8){\bc}\put(17,8){\mhone} \put(17.2,7.4){\tiny $\beta^2 k$}
\put(18,9){\bc}\put(18,9){\mhone}
\put(19,10){\bc}\put(19,10){\mhone}
\put(20,11){\rc}\put(20,11){\mhone}
\put(21,12){\rc}\put(21,12){\mhone}

\put(19,2){\bc}\put(19,2){\mhz}
  \put(18.1,1.2){\tiny $u(10)$}
\put(19,3){\bc}\put(19,3){\mhz}
\put(19,4){\bc}\put(19,4){\mhz}
\put(19,5){\bc}\put(19,5){\mhz}
\put(19,6){\bc}\put(19,6){\mhz}
\put(19,7){\bc}\put(19,7){\mhz}
\put(19,8){\bc}\put(19,8){\mhz}
\put(19,9){\bc}\put(19,9){\mhzt}

\put(21,3){\bc}\put(21,3){\mhz} 
  \put(20.1,2.2){\tiny $t(11)$}
\put(21,4){\bc}\put(21,4){\mhz}
\put(21,5){\bc}\put(21,5){\mhz}
\put(21,6){\bc}\put(21,6){\mhz}
\put(21,7){\bc}\put(21,7){\mhz}
\put(21,8){\bc}\put(21,8){\mhz}
\put(21,9){\bc}\put(21,9){\mhz}
\put(21,10){\bc}\put(21,10){\mhz}
\put(21,11){\bc}\put(21,11){\line(1,2){0.5}}
\put(21.5,12){\bc}\put(21.5,12){\line(-1,2){0.5}}

\put(23,4){\bc}\put(23,4){\mhz} 
  \put(22.3,3.4){\tiny $\beta r_{15}$}
\put(23,5){\bc}\put(23,5){\mhz}
\put(23,6){\bc}\put(23,6){\mhz}
\put(23,7){\bc}\put(23,7){\mhz}
\put(23,8){\bc}\put(23,8){\mhz}
\put(23,9){\bc}\put(23,9){\mhz}
\put(23,10){\bc}\put(23,10){\mhz}
\put(23,11){\bc}

\end{picture}
\caption{$\Ext_{\SA(1)}^{*}(\widetilde H^{*,*}(DQ_{17}),\M_2)$ 
\label{fig:DQn=1}}
\end{figure}
\end{center}

\vspace{5in}
\mbox{}

%%%%%%%%%%%%%%%%%%%%%%%%%%%%%%%%%%%%%%%%%%%%%%%%%%%%%%%%%%%%%%%%%%%%%%

\end{document}